\theoremstyle{plain}
\newtheorem{theorem}{Theorem}[section]
\newtheorem{lemma}[theorem]{Lemma}
\newtheorem{proposition}[theorem]{Proposition}
\newtheorem{corollary}[theorem]{Corollary}
\theoremstyle{definition}
\newtheorem*{example}{Example}
\newtheorem*{remark}{Remark}
\providecommand{\mathbbm}{\mathbb} % In case we don't load bbm
\newcommand{\R}{\mathbbm{R}}
\newcommand{\sfr}{\mathsf{r}}
\pgfplotsset{compat=1.18}
\newcommand{\indicator}{{\bf{1}}}
\newcommand{\equald}{\stackrel{d}{=}}
\newcommand{\E}{\mathbb{E}}
\renewcommand{\P}{\mathbb{P}}
\newcommand{\tildet}{\widetilde{t}}
\newcommand{\hatt}{\hat{t}}
\newcommand{\hattheta}{\hat{\theta}}
\newcommand{\overbar}[1]{\mkern 1.5mu\overline{\mkern-1.5mu#1\mkern-1.5mu}\mkern 1.5mu}
\newcommand{\barY}{\overbar{Y}}
\newcommand{\mcE}{\mathcal{E}}
\newcommand{\mcG}{\mathcal{G}}
\newcommand{\mcN}{\mathcal{N}}
\newcommand{\mcS}{\mathcal{S}}
\newcommand{\inparen}[1]{\left(#1\right)}             %\inparen{x+y}  is (x+y)
\newcommand{\inbraces}[1]{\left\{#1\right\}}           %\inbrace{x+y}  is {x+y}
\newcommand{\insquare}[1]{\left[#1\right]}             %\insquare{x+y}  is [x+y]
\newcommand{\inp}[1]{\ensuremath{\left\langle #1 \right\rangle}}
\DeclareMathOperator*{\argmax}{arg\,max}
\DeclareMathOperator*{\argmin}{arg\,min}
\renewcommand{\epsilon}{\varepsilon}
\title{Risk reversal for least squares estimators\\ under nested convex constraints}
\author{Omar Al-Ghattas\\
\small Broad Institute of MIT and Harvard\\
\small \texttt{oag@mit.edu}}
\date{}
\begin{document}
\maketitle

\begin{abstract}
In constrained stochastic optimization, one naturally expects that restricting the feasible set, provided it still contains the true parameter, should not increase the statistical risk of the corresponding projection estimator. We show that this intuition can fail, even in basic settings.

We investigate this phenomenon in the Gaussian sequence model. Given a compact, convex set $\Theta \subseteq \R^d$, one observes
\[
Y = \theta^\star + \sigma Z, \qquad Z \sim N(0, I_d),
\]
and seeks to estimate an unknown parameter $\theta^\star \in \Theta$. 

In this setting, the maximum likelihood estimator over $\Theta$ coincides with the least squares estimator (LSE) and is given by the Euclidean projection of $Y$ onto $\Theta$. We construct an explicit example exhibiting \emph{risk reversal}: for sufficiently large noise, there exist nested compact convex sets $\Theta_S \subsetneq \Theta_L$ and a parameter $\theta^\star \in \Theta_S$ such that the LSE constrained to $\Theta_S$ has strictly larger squared-error risk than the LSE constrained to $\Theta_L$. Moreover, we demonstrate that risk reversal can persist at the level of worst-case risk, in the sense that the supremum of the risk over the smaller constraint set exceeds that over the larger one. Finally, we show that the phenomenon is not specific to Gaussian noise or squared-error risk, extending our results beyond both settings.

We clarify this phenomenon by contrasting the behavior of the LSE across noise regimes. In the vanishing-noise limit, the risk admits a first-order expansion governed by the statistical dimension of the tangent cone at $\theta^\star$, and risk reversal cannot occur at the leading $\sigma^2$ scale. In the diverging-noise regime, the risk is instead determined by global geometric interactions between the constraint set and random noise directions. Here, the embedding of $\Theta_S$ within $\Theta_L$, rather than the local geometry at $\theta^\star$, plays a decisive role in the behavior of the LSE and can reverse the risk ordering.

These results reveal a previously unrecognized failure mode of the LSE, and of projection-based estimators more broadly. They demonstrate that in sufficiently noisy settings, tightening a constraint can paradoxically degrade statistical performance.
\end{abstract}

\section{Introduction}

In statistical estimation, it is standard to analyze estimators under the assumption that the unknown parameter belongs to a prescribed feasible set. Such constraints encode prior structural information, reduce the effective complexity of the parameter space, and are therefore widely believed to improve estimation accuracy. This suggests that, when a constraint is correctly specified, further restricting the feasible set in a way that preserves the truth should not worsen performance.

In applied work, constraints rarely appear fully formed. More often, a statistician begins with a simple modeling assumption, such as boundedness or smoothness, and then gradually refines the model as additional insight becomes available. New constraints may be introduced to reflect domain knowledge, improve interpretability, or enforce desired qualitative behavior. Each refinement is typically viewed as a step toward a more faithful representation of the underlying problem, and thus toward better statistical performance. From this perspective, it is far from obvious that adding valid structure could ever worsen statistical performance. 

We study this question in the Gaussian sequence model under compact convex constraints. Here, the natural estimator is the least squares estimator (LSE), which coincides with the maximum likelihood estimator (MLE) and admits a geometric characterization as the Euclidean projection onto the constraint set. Although this formulation is simple, existing analyses of the LSE typically rely on local approximations near the true parameter and therefore do not capture how the estimator's risk depends on the global geometry of the constraint set. When two compact, convex constraint sets are nested and the smaller set contains the true parameter, one might expect the corresponding LSE to have lower risk. We show that this intuition can fail: there exist settings in which the LSE associated with the smaller constraint has strictly larger risk than that associated with the larger constraint, both pointwise at a fixed parameter and at the level of worst-case risk.

\subsection{Problem Setup}
We now introduce the mathematical framework that will serve as the basic setting for our analysis throughout the paper. Let $\Theta_S$ and $\Theta_L$ be compact, convex subsets of $\R^d$ satisfying
\[
\Theta_S \subsetneq \Theta_L,
\]
and suppose the unknown parameter $\theta^\star$ lies in $\Theta_S$. We observe a single realization 
from the Gaussian sequence model
\begin{align}\label{eq:GSM}
Y = \theta^\star + \sigma Z, \qquad Z \sim N(0,I_d),
\end{align}
where $\sigma>0$ denotes the noise level. Our goal is to estimate $\theta^\star$ from the noisy 
observation $Y$. For a nonempty, compact and convex set $\Theta \subseteq \R^d$, define the Euclidean projection of a point
$y \in \R^d$ onto $\Theta$ by
\[
\Pi_\Theta(y)
:= \argmin_{\theta \in \Theta} \|y - \theta\|^2.
\]
Since $\Theta$ is compact and convex, the minimizer exists and is unique. The estimator $\Pi_\Theta(Y)$ is the LSE, which coincides with the MLE under the constraint $\theta^\star \in \Theta$. 

We consider the two LSEs
\[
\hattheta_S := \Pi_{\Theta_S}(Y), \qquad
\hattheta_L := \Pi_{\Theta_L}(Y),
\]
and evaluate their performance using the squared-error risk
\[
R_\sigma(\theta^\star; \Theta)
:= \E_{\theta^\star}\|\Pi_{\Theta}(\theta^\star + \sigma Z) - \theta^\star\|^2.
\] 
The inclusion $\Theta_S \subsetneq \Theta_L$ suggests a natural comparison. The estimator $\hattheta_S$ 
enforces a stronger constraint and thus incorporates more prior information than $\hattheta_L$, 
while remaining correctly specified whenever $\theta^\star \in \Theta_S$. This leads to the central 
question of this work: can strengthening the constraint ever increase the estimation risk? In 
particular, do there exist settings in which
\[
R_\sigma(\theta^\star; \Theta_S) > R_\sigma(\theta^\star; \Theta_L)?
\]
We refer to this phenomenon as \emph{risk reversal}. Beyond this pointwise comparison, we also ask whether an analogous reversal can occur at the level of worst-case risk over the constraint set.

\begin{remark}[Multiple observations]
For notational simplicity, we state results for a single observation. This entails no loss of generality, since, if instead one observes $n\ge1$ independent samples
$
Y_1,\dots,Y_n \sim N(\theta^\star,\sigma^2 I_d)
$,
the LSE is simply the orthogonal projection of the sample mean $\barY$ onto the constraint set. Since $\barY$ is Gaussian with mean $\theta^\star$ and covariance matrix $\frac{\sigma^2}{n} I_d$, the $n$-sample problem is equivalent to the single-sample model with effective noise level $\frac{\sigma}{\sqrt n}$. Consequently, all results of this paper extend immediately to the $n$-sample setting after replacing $\sigma$ by the effective noise level.
\end{remark}

\begin{figure}
    \centering
    \resizebox{\linewidth}{!}{%
        \begin{tabular}{c@{\hspace{0.5cm}}c} 

            % ==================================================
            % PLOT 1 (LEFT)
            % ==================================================
            
            \begin{tikzpicture}[scale=2.5]
                %  CONFIGURATION 
                \def\c{0.6}
                \pgfmathsetmacro{\invc}{1/\c}
                \pgfmathsetmacro{\alphac}{1 + (\invc)^2} 
                
                \def\bound{2.5}       
                \def\rightlimit{2.8}  
                
                %  GLOBAL STYLES 
                \tikzset{
                    every node/.style={font=\small}, 
                    regionlabel/.style={fill=white, fill opacity=0.8, text opacity=1, align=center, rounded corners=2pt, inner sep=3pt}
                }

                %  CLIPPING 
                \clip (-\bound+1, -\bound+1) rectangle (\rightlimit, \bound);

                %  REGION FILLS 
                % Region A_1 (Blue)
                \fill[black!20] 
                    (-3, -3) -- (3, -3) -- (3, {-3*\invc}) -- (-3, {-(-3)*\invc}) -- cycle;

                % Region R_01 (Cyan)
                \fill[blue!20] 
                    (-3, {3*\invc}) -- 
                    (4, {-4*\invc}) -- 
                    (4, {-4*\invc + \alphac}) -- 
                    (-3, {3*\invc + \alphac}) -- cycle;

                % Region A_2 (Violet)
                \fill[violet!40] 
                    (-3, {3*\invc + \alphac}) -- 
                    (4, {-4*\invc + \alphac}) -- 
                    (4, 4) -- (-3, 4) -- cycle;

                %  DRAW BOUNDARIES & AXES 
                \draw[->, thick, gray] (-\bound+1.2, 0) -- (\rightlimit-0.2, 0) node[right, black] {$y_1$};
                \draw[->, thick, gray] (0, -\bound+1.2) -- (0, \bound-0.2) node[above, black] {$y_2$};

                % Segment v0 -- v1
                \draw[thick, black] (0,0) -- (\invc, 1);
                
                %  LABELS 
                \node[regionlabel, fill opacity=0.0] at (-0.5, -0.5) {$A_1$\\$\hattheta_S=(0,0)$};
                \node[regionlabel, fill opacity=0.0] at (0.5, 1) {$A_{12}$\\$\hattheta_S=(\frac{y_1+cy_2}{1+c^2}, \frac{c(y_1+cy_2)}{1+c^2} )$};
                \node[regionlabel, fill opacity=0.0] at (\invc + 0.4, 1.75) {$A_2$\\$\hattheta_S=(\frac1c,1)$};

                %  VERTICES 
                \fill (0,0) circle (0.5pt);
                \node[anchor=north east, inner sep=3pt] at (0,0) {$v_1=(0,0)$};
                
                \fill (\invc, 1) circle (0.5pt);
                \node[anchor=south west, inner sep=3pt, yshift=2pt] at (\invc,1) {$v_2=(\frac1c,1)$};

                \node[anchor=north east, inner sep=3pt] at (1.1,0.5) {$\Theta_S$};

                %  Theta Star
                \node[text=blue!80!black] at (0,0) {$\star$};
                \node[text=blue!80!black, anchor=south east] at (0,0) {$\theta^\star$};

            \end{tikzpicture}
            & 

            % ==================================================
            % PLOT 2 (RIGHT)
            % ==================================================
            \begin{tikzpicture}[scale=2.5]
                %  CONFIGURATION 
                \def\c{0.6}
                \pgfmathsetmacro{\invc}{1/\c}
                \def\bound{2.5}       
                \def\rightlimit{2.8}  
                
                %  ALIGNMENT HEIGHTS 
                \def\yRowTop{1.8}    % Height for R2, R12, R1
                \def\yRowMid{0.75}   % Height for R02, Theta_L
                \def\yRowBot{-0.5}   % Height for R0, R01
                
                %  GLOBAL STYLES 
                \tikzset{
                    every node/.style={font=\small}, 
                    regionlabel/.style={fill=white, fill opacity=0.8, text opacity=1, align=center, rounded corners=2pt, inner sep=3pt}
                }

                %  CLIPPING 
                \clip (-\bound+1, -\bound+1) rectangle (\rightlimit, \bound);

                %  REGION FILLS 
                % Theta_L 
                \fill[teal!40] (0,0) -- (0,1) -- (\invc, 1) -- cycle;
                % A_2 
                \fill[red!40] (0,1) -- (-\bound, 1) -- (-\bound, \bound) -- (0, \bound) -- cycle;
                % A_02
                \fill[orange!40] (0,0) -- (-\bound, 0) -- (-\bound, 1) -- (0,1) -- cycle;
                % A_1 
                \fill[black!20] (0,0) -- (-\bound, 0) -- (-\bound, -\bound) -- (\c*\bound, -\bound) -- cycle; 
                % A_{12} 
                \fill[blue!40] (0,1) -- (0, \bound) -- (\invc, \bound) -- (\invc, 1) -- cycle;
                % A_1 
                \fill[violet!40] (\invc, 1) -- (\invc, \bound) -- (\bound+2, \bound) -- (\bound+2, {1 - \invc*(\bound+2 - \invc)}) -- cycle;
                % R_01
                \fill[blue!10] (0,0) -- (\invc, 1) -- (\bound+2, {1 - \invc*(\bound+2 - \invc)}) -- (\bound+2, -\bound) -- (\c*\bound, -\bound) -- cycle;

                %  DRAW BOUNDARIES 
                \draw[->, thick, gray] (-\bound+1.2, 0) -- (\rightlimit-0.2, 0) node[right, black] {$y_1$};
                \draw[->, thick, gray] (0, -\bound+1.2) -- (0, \bound-0.2) node[above, black] {$y_2$};
                \draw[thick, black] (0,0) -- (0,1) -- (\invc, 1) -- cycle;
       
                %  LABELS (ALIGNED ROWS) 
                \node[regionlabel, fill opacity=0.0] at (-0.8, \yRowTop) {$A_3$\\$\hattheta_L=(0,1)$};
                \node[regionlabel, fill opacity=0.0] at (0.5*\invc, \yRowTop) {$A_{23}$\\$\hattheta_L=(y_1,1)$};
                \node[regionlabel, fill opacity=0.0] at (\invc + 0.5, \yRowTop) {$A_2$\\$\hattheta_L=(\frac1c,1)$};

                \node[regionlabel, fill opacity=0.0] at (-0.8, \yRowMid) {$A_{13}$\\$\hattheta_L=(0,y_2)$};
                \node[regionlabel, fill opacity=0.0] at (0.3*\invc, \yRowMid) {$\Theta_L$\\$\hattheta_L=(y_1,y_2)$};
                
                \node[regionlabel, fill opacity=0.0] at (-0.5, \yRowBot) {$A_1$\\$\hattheta_L=(0,0)$};
                \node[regionlabel, fill opacity=0.0] at (1.3, \yRowBot) {$A_{12}$\\$\hattheta_L=(\frac{y_1+cy_2}{1+c^2}, \frac{c(y_1+cy_2)}{1+c^2} )$};

                %  VERTICES (ALIGNED) 
                \fill (0,0) circle (0.5pt);
                \node[anchor=north east, inner sep=3pt] at (0,0) {$v_1=(0,0)$};
                
                \fill (0,1) circle (0.5pt);
                \node[anchor=south east, inner sep=3pt, yshift=2pt] at (0,1) {$v_3 =(0,1)$};
                
                \fill (\invc,1) circle (0.5pt);
                \node[anchor=south west, inner sep=3pt, yshift=2pt] at (\invc,1) {$v_2=(\frac1c,1)$};

                \node[text=blue!80!black] at (0,0) {$\star$};
                \node[text=blue!80!black, anchor=south east] at (0,0) {$\theta^\star$};
            \end{tikzpicture}
        \end{tabular}
    }
    \caption{
    Geometry of the LSE in the two-dimensional observation space
    $y=(y_1,y_2)\in\R^2$ over the constraint sets
    $\Theta_S=\operatorname{conv}(v_1,v_2)$ (left) and
    $\Theta_L=\operatorname{conv}(v_1,v_2,v_3)$ (right).
    The shaded regions illustrate the partition of the sample space
    based on the location of the constrained solution.
    Regions $A_j$ denote observations projected onto vertex $v_j$,
    while regions $A_{ij}$ denote observations projected onto the line
    segment connecting $v_i$ and $v_j$.
    }
    \label{fig:side_by_side}

\end{figure}

\subsection{Risk reversal: an explicit example}
\label{sec:GSM-Counterexample}

We begin with a two-dimensional example in which a tighter, correctly specified constraint produces larger risk. The risks under both constraints admit explicit formulas, allowing us to track their dependence on the noise level. The example also illustrates the global geometric mechanism that motivates the general vanishing- and diverging-noise analyses developed below.

\begin{example}[Running example] 
Fix a constant $c>0$ and define the points
\[
v_1 := (0,0), \qquad 
v_2 := (c^{-1},1), \qquad 
v_3 := (0,1).
\]
Let
\[
\Theta_L := \operatorname{conv}(v_1, v_2, v_3)
\]
denote the convex hull of these points, forming a closed triangular region, and let
\[
\Theta_S := \operatorname{conv}(v_1, v_2)
\]
denote the line segment joining $v_1$ and $v_2$. We observe a single realization from the Gaussian sequence model \eqref{eq:GSM} with true parameter $\theta^\star = v_1$. Let $\hattheta_S$ and $\hattheta_L$
denote the corresponding LSEs under the constraints $\Theta_S$ and $\Theta_L$, respectively.
\end{example}

 Although we take $\theta^\star=v_1$, a vertex of the constraint set, this choice is made only to simplify the risk calculations. The phenomenon is not tied to placing the true parameter at a vertex, or even at the boundary, as shown in Corollary~\ref{cor:ellipse}. We depict the constraint sets and the action of the LSEs across different regions of the sample space in Figure~\ref{fig:side_by_side}.

\definecolor{myblue}{RGB}{0, 114, 178}
\definecolor{mycyan}{RGB}{86, 180, 233}
\definecolor{mygreen}{RGB}{0, 158, 115}
\definecolor{myorange}{RGB}{230, 159, 0}
\definecolor{myred}{RGB}{213, 94, 0}
\definecolor{mypurple}{RGB}{204, 121, 167}

\begin{figure}
    \centering
    \begin{tikzpicture}
    \begin{semilogxaxis}[
      width=14cm,
      height=6.5cm,
      xlabel={$\sigma$},
      ylabel={$R_\sigma(\theta^\star; \Theta_S)-R_\sigma(\theta^\star; \Theta_L)$}, 
      xlabel style={font=\normalsize},
      ylabel style={font=\normalsize},
      tick label style={font=\small},
      xmin=1e-2,
      xmax=50,
      %  Grid Style 
      grid=both,
      major grid style={gray!30},
      minor grid style={gray!15},
      %  Legend Style 
      legend style={
        font=\small,
        draw=none,
        fill=white,
        fill opacity=0.8,
        text opacity=1,
        at={(0.02,0.98)},
        anchor=north west,
        legend cell align={left}
      },
      %  Line Styles 
      cycle list={
        {myblue,   very thick, line cap=round},
        {mycyan,   very thick, line cap=round},
        {mygreen,  very thick, line cap=round},
        {myorange, very thick, line cap=round},
        {myred,    very thick, line cap=round},
        {mypurple, very thick, line cap=round}
      }
    ]

    % Zero reference line
    \addplot [gray, dashed, thin, domain=1e-2:50, forget plot] {0};

    % Load data (.csv is in the same folder as main .tex file)
    \pgfplotstableread[col sep=comma]{risk_difference_single_plot_multi_c.csv}\datatable

    % Plot Data
    \addplot table[x=sigma,y=diff_c02] {\datatable};
    \addlegendentry{$c=0.2$}

    \addplot table[x=sigma,y=diff_c05] {\datatable};
    \addlegendentry{$c=0.5$}

    \addplot table[x=sigma,y=diff_c09] {\datatable};
    \addlegendentry{$c=0.9$}

    \addplot table[x=sigma,y=diff_c10] {\datatable};
    \addlegendentry{$c=1$}

    \addplot table[x=sigma,y=diff_c20] {\datatable};
    \addlegendentry{$c=2$}

    \addplot table[x=sigma,y=diff_c50] {\datatable};
    \addlegendentry{$c=5$}

    \end{semilogxaxis}
    \end{tikzpicture}
    
    \caption{Risk difference as a function of noise level $\sigma$. The difference $R_\sigma(\theta^\star; \Theta_S)-R_\sigma(\theta^\star; \Theta_L)$ is plotted against $\sigma$ for varying values of the geometric parameter $c$. Note that for $c \in \{0.2,0.5,0.9 \}$, $\hattheta_L$ outperforms $\hattheta_S$ whenever the noise level is sufficiently large.}
    \label{fig:varying-c}
\end{figure}

\begin{theorem}\label{thm:counter-example}
Fix $c > 0$ in the running example. For any $\sigma>0$, the risks $R_\sigma(\theta^\star; \Theta_S)$ and $R_\sigma(\theta^\star; \Theta_L)$ admit exact closed-form expressions. Moreover, the following asymptotic comparisons hold.
\begin{enumerate}
\item[\textup{(i)}] \textit{Vanishing-noise regime.}
As $\sigma\to 0^+$,
\begin{equation}\label{eq:vanishing-noise-diff}
R_\sigma(\theta^\star; \Theta_S) - R_\sigma(\theta^\star; \Theta_L)
=
-\frac{\sigma^2}{\pi}\arctan \frac{1}{c} + o(\sigma^2).
\end{equation}
In particular, there exists $\sigma_0(c) > 0$ such that for all $\sigma\in(0,\sigma_0(c))$,
\[
R_\sigma(\theta^\star; \Theta_S)<R_\sigma(\theta^\star; \Theta_L).
\]

\item[\textup{(ii)}] \textit{Diverging-noise regime.}
As $\sigma\to\infty$,
\begin{equation}\label{eq:diverging-noise-diff}
R_\sigma(\theta^\star; \Theta_S) - R_\sigma(\theta^\star; \Theta_L)
=
\frac{1}{4c^2} - \frac{1+c^2}{2\pi c^2}\arctan \frac{1}{c}
+o(1).
\end{equation}
In particular, for $c\neq 1$, there exists $\sigma_1(c)<\infty$
such that for all $\sigma \in (\sigma_1(c), \infty)$,

\[
\begin{cases}
R_\sigma(\theta^\star; \Theta_S) > R_\sigma(\theta^\star; \Theta_L), & 0 < c < 1,\\[4pt]
R_\sigma(\theta^\star; \Theta_S) < R_\sigma(\theta^\star; \Theta_L), & c > 1.
\end{cases}
\]
When $c=1$, the leading constant in \eqref{eq:diverging-noise-diff} vanishes, so that
\[
R_\sigma(\theta^\star; \Theta_S) - R_\sigma(\theta^\star; \Theta_L)=o(1).
\]
\end{enumerate}
\end{theorem}

The proofs of all results in the paper are deferred to Section~\ref{sec:proofs}.

We emphasize that the key geometric mechanism underlying risk reversal in this example is the misalignment between the constraint sets. Although $\Theta_S \subsetneq \Theta_L$, the orientation of $\Theta_S$ within $\Theta_L$ is such that, for certain realizations of $Y$, projection onto $\Theta_S$ yields an estimate that lies farther from $\theta^\star$ than the projection onto $\Theta_L$. Figure~\ref{fig:goodregions} highlights the subset $\mathcal G \subseteq \R^2$ of points for which this pointwise reversal occurs. This observation suggests that the mechanism behind risk reversal is non-local: it depends on the behavior of the projection map on regions of the sample space away from $\theta^\star$, and hence is not captured by geometric characterizations based only on the local structure of the constraint near $\theta^\star$. For these pointwise discrepancies to translate into a reversal of risks, the noise distribution centered at $\theta^\star$ must place sufficient probability mass on such regions.

Theorem~\ref{thm:counter-example} shows that for the running example, risk reversal cannot occur in the vanishing-noise regime. In contrast, the diverging-noise regime is more subtle. Depending on the parameter $c$, which controls the length of the edge connecting vertices $v_2$ and $v_3$ in $\Theta_L$, the risk ordering may either agree with or reverse the vanishing-noise case. In particular, for $0<c<1$ the tighter constraint $\Theta_S$ has strictly larger risk for sufficiently large $\sigma$, whereas
for $c>1$, the ordering is preserved.

The explicit risk expressions enable us to plot the associated risk curves (Figure~\ref{fig:varying-c}), offering further intuition about the dependence of risk reversal on the noise level. Notably, these curves suggest a single-crossing pattern: once the risk difference becomes positive, it appears to remain positive as $\sigma$ increases. This observation motivates an analysis of the diverging-noise regime, since the ordering of the risks stabilizes and can be characterized asymptotically. Similarly, Figure~\ref{fig:heatmap} visualizes $R_\sigma(\theta^\star; \Theta_S)-R_\sigma(\theta^\star; \Theta_L)$ over a range of noise levels $\sigma$ and geometric parameters $c$, showing that risk reversal can occur for moderate to large values of $\sigma$.

\begin{figure}[t]
\centering
\begin{tikzpicture}
\begin{axis}[
    width=12cm,
    height=7cm,
    width=0.72\textwidth,
    xmin=0.001, xmax=1.5,
    ymin=0.2,   ymax=10,
    xlabel={$c$},
    ylabel={$\sigma$},
    enlargelimits=false,
    axis on top,
    tick pos=left,
    tick align=outside,
    tick label style={font=\small},
    label style={font=\large},
    clip=true,
]

% heatmap
\addplot graphics[
    xmin=0.001, xmax=1.5,
    ymin=0.05,  ymax=10,
] {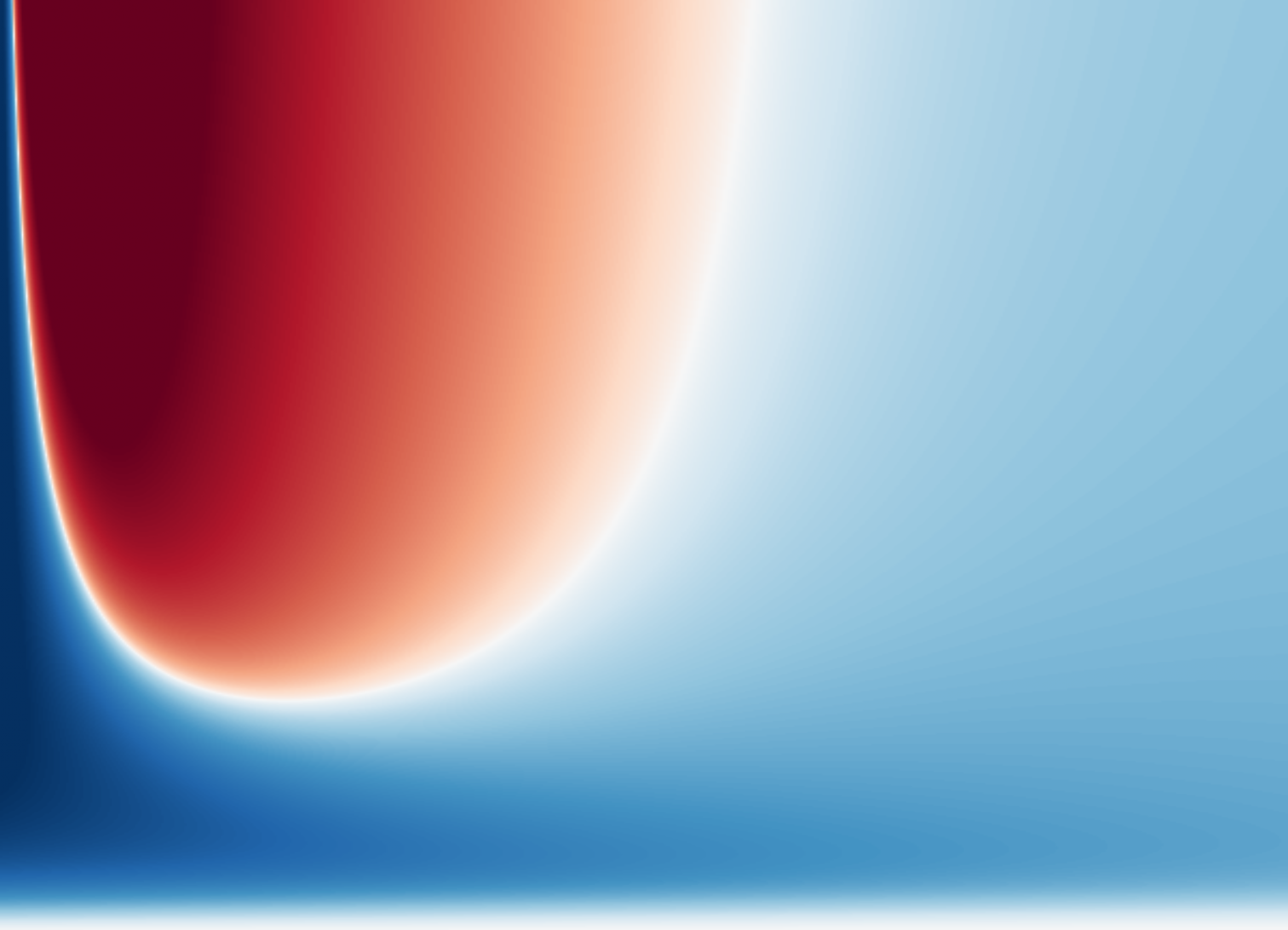};
\end{axis}
\end{tikzpicture}
\caption{Heat map of the risk difference $R_\sigma(\theta^\star; \Theta_S)-R_\sigma(\theta^\star; \Theta_L)$ as a function of the geometric parameter $c$ (horizontal axis) and the noise level $\sigma$ (vertical axis). Red indicates positive risk difference, blue indicates negative risk difference, and white indicates values close to zero. In the running example, risk reversal occurs in a substantial intermediate-to-large noise regime for values of $c\in(0,1)$.}
 \label{fig:heatmap}
\end{figure}

\subsection{Risk reversal: beyond pointwise comparisons}

Up to this point, our analysis has focused on a \emph{pointwise} manifestation of risk reversal. One might reasonably view such pointwise failures as relatively mild. Indeed, statistical theory contains many examples---most famously the Hodges super-efficient estimator \cite[Chapter 8]{van2000asymptotic}---in which an estimator behaves anomalously at isolated points or in small neighborhoods, without representing a genuine global phenomenon. From this perspective, one could ask whether risk reversal is merely a local pathology, in which a tighter constraint degrades performance only at isolated points while still offering uniformly better performance overall, or whether it reflects a more substantial breakdown of the LSE.

A natural way to strengthen the comparison is to consider the worst-case risk over the constraint set. From a decision-theoretic perspective, this corresponds to evaluating the estimator under the least favorable parameter consistent with the assumed constraint. If tightening a correctly specified constraint increases even the worst-case risk, then the phenomenon cannot be dismissed as localized or atypical.

\begin{theorem}\label{thm:worst-case-risk-reversal}
There exist nested compact convex sets $\Theta_S \subsetneq \Theta_L$ and a constant $\sigma_1 < \infty$ such that, for every $\sigma \in (\sigma_1, \infty)$,
\[
\sup_{\theta \in \Theta_S} R_\sigma(\theta; \Theta_S)
>
\sup_{\theta \in \Theta_L} R_\sigma(\theta; \Theta_L).
\]
\end{theorem}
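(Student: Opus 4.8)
The plan is to reduce the worst-case comparison to the diverging-noise limit, where the risk of each LSE is governed purely by the global geometry of its constraint set through the support points, and then to exhibit a symmetric construction in which the limiting worst-case risk strictly \emph{decreases} when passing from the smaller set to the larger one.

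First I would establish a pointwise diverging-noise limit. For a compact convex $\Theta$ and $\theta\in\Theta$, write $S_\Theta(z):=\operatorname{arg\,max}_{u\in\Theta}\langle u,z\rangle$ for the support point of $\Theta$ in direction $z$, which is unique for Lebesgue-a.e.\ $z$ when $\Theta$ is a polytope. Since $\Pi_\Theta(\theta+\sigma z)$ equals $\operatorname{arg\,max}_{u\in\Theta}\bigl(2\langle \theta+\sigma z,u\rangle-\|u\|^2\bigr)$, letting $\sigma\to\infty$ makes the linear term dominate, so $\Pi_\Theta(\theta+\sigma Z)\to S_\Theta(Z)$ almost surely. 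Because $\|\Pi_\Theta(\cdot)-\theta\|$ is bounded by $\operatorname{diam}(\Theta)$, bounded convergence gives
\[
\lim_{\sigma\to\infty}R_\sigma(\theta;\Theta)=W_\Theta(\theta):=\E_Z\bigl\|S_\Theta(Z)-\theta\bigr\|^2 .
\]

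Next I would upgrade this to convergence of the worst-case risk, namely $\sup_{\theta\in\Theta}R_\sigma(\theta;\Theta)\to\sup_{\theta\in\Theta}W_\Theta(\theta)$. Since $\Pi_\Theta$ is $1$-Lipschitz and the integrand is bounded by $\operatorname{diam}(\Theta)^2$, the maps $\theta\mapsto R_\sigma(\theta;\Theta)$ are Lipschitz with a constant independent of $\sigma$; an equi-Lipschitz family converging pointwise on the compact set $\Theta$ converges uniformly, so the suprema converge. Writing $m_\Theta:=\E_Z S_\Theta(Z)$ (the Steiner point) and $V_\Theta:=\E_Z\|S_\Theta(Z)-m_\Theta\|^2$, the identity $W_\Theta(\theta)=\|\theta-m_\Theta\|^2+V_\Theta$ gives the clean formula $\sup_{\theta\in\Theta}W_\Theta(\theta)=\max_{\theta\in\Theta}\|\theta-m_\Theta\|^2+V_\Theta$, with the maximum attained at an extreme point. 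The heart of the argument is then a construction making this quantity strictly smaller for the larger set. I would take $\Theta_S$ to be the horizontal segment joining $(\pm\ell/2,0)$ and $\Theta_L:=\operatorname{conv}\{(\pm\ell/2,0),(0,\pm h)\}$ a thin rhombus of small height $h>0$, so that $\Theta_S\subset\Theta_L$ and both have Steiner point at the origin by symmetry. For the segment, $S_{\Theta_S}(Z)$ is each endpoint with probability $\tfrac12$, giving $V_{\Theta_S}=\ell^2/4$ and $\max_\theta\|\theta\|^2=\ell^2/4$, hence $\sup W_{\Theta_S}=\ell^2/2$. For the rhombus, the normal-cone angles show that each far endpoint carries probability $\tfrac1\pi\arctan(\ell/2h)=\tfrac12-\tfrac{2h}{\pi\ell}+o(h)$ while the top and bottom vertices (at distance only $h$) absorb the remaining mass; a short expansion yields $V_{\Theta_L}=\ell^2/4-\tfrac{h\ell}{\pi}+o(h)$, whereas $\max_\theta\|\theta\|^2=\ell^2/4$ is unchanged for $h<\ell/2$. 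Thus $\sup W_{\Theta_L}=\ell^2/2-\tfrac{h\ell}{\pi}+o(h)<\ell^2/2=\sup W_{\Theta_S}$ for all small $h>0$, and combining with the uniform convergence above, for $\sigma$ large enough the finite-$\sigma$ worst-case risks stay within $\tfrac{h\ell}{4\pi}$ of their limits, so the strict gap persists and the theorem follows.

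The main obstacle is the geometric step, not the analytic one: natural first guesses — a disk, a thin rectangle, or the triangle of Example~\ref{ex:running-example} — give equality or the wrong inequality, because enlarging a set usually raises either the circumradius from the Steiner point or the variance of the support point. The decisive observation is that extending the segment into a rhombus of small height leaves the circumradius untouched (the farthest point is still an endpoint) while strictly reducing the support-point variance, since a positive-probability band of near-vertical directions now projects onto the nearby top and bottom vertices rather than the distant endpoints. Verifying that this variance reduction is first order in $h$ while the circumradius is unchanged is the crux; the limiting identity and the uniform-convergence argument are comparatively routine.
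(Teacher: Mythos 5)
Your proposal is correct, and its analytic skeleton coincides with the paper's: your pointwise diverging-noise limit is Theorem~\ref{thm:large-sigma-projection} (specialized to polytopes via Lemma~\ref{lem:polytopeMaxAtVertex}), your equi-Lipschitz uniform-convergence upgrade is exactly Lemma~\ref{lem:uniform-large-sigma}, and the persistence of a strict limiting gap at finite $\sigma$ is Theorem~\ref{thm:finite-sigma-reversal}. Where you genuinely differ is the construction certifying the limiting reversal, and there your route is arguably cleaner. The paper (Lemma~\ref{lem:explicit-worst-case-risk}) takes two nested triangles $\Theta_S=\operatorname{conv}\{v_1,v_2,v_b\}\subset\Theta_L=\operatorname{conv}\{v_1,v_2,v_a\}$ with $c=0.75$, $a=0.5$, $b=1.3$, evaluates the three vertex risks of each set, and confirms numerically that $1.3851\ldots>1.3247\ldots$; no structural reason for the inequality is visible beyond the numbers. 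You instead use the bias--variance decomposition $R_\infty(\theta;\Theta)=\|\theta-m_\Theta\|^2+V_\Theta$ about the mean support point $m_\Theta=\E S_\Theta(Z)$ --- a decomposition the paper never exploits --- so that for your centrally symmetric sets the worst-case limiting risk is the squared circumradius from $m_\Theta$ plus the support-point variance. Fattening the segment into the rhombus leaves the circumradius term at $\ell^2/4$ but strictly lowers the variance; indeed your computation gives the exact identity $V_{\Theta_L}=\tfrac{\ell^2}{4}-\tfrac{2}{\pi}\arctan\bigl(\tfrac{2h}{\ell}\bigr)\bigl(\tfrac{\ell^2}{4}-h^2\bigr)$, so the reversal holds for \emph{every} $h\in(0,\ell/2)$ and the small-$h$ expansion you perform is not even needed. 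Your approach buys an exact, fully analytic certificate that isolates the mechanism (normal-cone mass migrating to nearby vertices reduces variance without affecting the worst-case bias term); the paper's buys an example in which both sets are two-dimensional, whereas your $\Theta_S$ is a segment. That is immaterial for the theorem as stated, but the paper is elsewhere careful (remark following Corollary~\ref{cor:DiveringNoiseThetaSLRisks}) to show risk reversal is not a dimensional artifact; if you wanted the same robustness, replace the segment by a rhombus of height $h'<h$ --- your formulas apply verbatim and the variance comparison still strictly favors the larger set. The only point at which your write-up is a sketch rather than a proof is the pointwise limit (``the linear term dominates''); this requires the quantitative optimality-gap argument, but it is routine and is exactly what the paper's Theorem~\ref{thm:large-sigma-projection} supplies.
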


Theorem~\ref{thm:worst-case-risk-reversal} follows immediately from Lemma~\ref{lem:explicit-worst-case-risk}, which in turn relies on a characterization of the LSE in the diverging-noise regime developed in Section~\ref{sec:diverging-noise-asymptotic}. In particular, we construct explicit constraint sets whose limiting risks exhibit worst-case risk reversal as $\sigma \to \infty$. We then show that, in this regime, the risk function converges uniformly over the constraint set. Consequently, worst-case risk reversal holds for all sufficiently large finite noise levels. We provide a detailed description of the result in Section~\ref{sec:sup-risk}.

\subsection{Risk reversal beyond Gaussian noise and squared-error risk}

The preceding results are formulated in the Gaussian sequence model, with risk
evaluated using squared-error risk. One could ask whether risk reversal is
specific to Gaussian noise or to squared-error risk. We show that it is not. In
Section~\ref{ssec:beyondGaussianSqLoss}, we introduce a class of radial
location models whose densities have the form $f_q(w)=C_q^{-1}q(\|w\|)$, where $q$ is a strictly decreasing radial density profile. For this class of models, the constrained MLE is still given by Euclidean projection onto the parameter space. We also
allow the risk to be evaluated using radial loss functions of the form
$\eta^{-1}\|\cdot\|^\eta$. Thus the estimator remains likelihood-based, while
both the noise distribution and the risk are allowed to vary. The following informal theorem summarizes the precise results proved in Theorems~\ref{thm:radial-beta-eta} and~\ref{thm:radial-worst-case-all-eta} in
Section~\ref{ssec:beyondGaussianSqLoss}.

\begingroup
\renewcommand{\thetheorem}{}
\begin{theorem}[Informal]
For a broad class of radial location models and radial loss functions, risk reversal
persists, in both a pointwise and worst-case sense, for all sufficiently large
noise levels.
\end{theorem}
\addtocounter{theorem}{-1}
\endgroup

The key point is that the diverging-noise behavior of the projection depends only on the direction of the noise and on the
geometry of the constraint sets. For radial noise, this direction is uniform on
the sphere, as in the Gaussian case, so the exposed-face selection probabilities
are unchanged. Changing the risk from squared-error risk to a radial loss
changes only the loss assigned to the selected limiting faces, not the
underlying face-selection geometry. As a consequence, the same geometric
mechanism yields risk reversal beyond the Gaussian squared-error setting, with
the limiting inequalities again implying reversal at sufficiently large finite
noise levels.

\begin{remark}[Risk reversal as a genuine statistical phenomenon]
We now give several reasons why the behavior demonstrated above should be
viewed as a genuine statistical phenomenon, rather than as an artifact of the
particular constructions used to exhibit it.

First, the estimator studied here is statistically motivated. In the Gaussian
sequence model~\eqref{eq:GSM}, the constrained LSE coincides with the
constrained MLE over $\Theta$. Thus the estimator is the likelihood-based
estimator under the assumed constraint, rather than an ad hoc projection rule.

Second, the constraints are correctly specified throughout our examples, in the
sense that $\theta^\star\in\Theta_S\subsetneq\Theta_L$. Hence risk reversal
does not arise from model misspecification.

Third, the geometry is deliberately well behaved. Throughout the paper, we work
with compact, convex constraint sets, so the projection estimator is well
defined and its diverging-noise behavior admits a clean geometric description in
terms of exposed faces and supporting hyperplanes; see
Section~\ref{sec:diverging-noise-asymptotic}. Thus the examples show that risk
reversal can occur even under restrictive convex-geometric assumptions.

Moreover, the specific features of the running example are not
intrinsic to risk reversal. In particular, the phenomenon is not tied to the
signal being an extreme point, to nonsmooth constraint sets, or to the smaller
constraint being lower-dimensional than the larger one. Indeed,
Corollary~\ref{cor:ellipse} gives an explicit smooth, full-dimensional example for which $\theta^\star$ lies in the interior of both constraint sets and risk reversal still occurs.

Fourth, the mechanism is not tied to Gaussianity or squared-error risk; as shown in Section~\ref{ssec:beyondGaussianSqLoss}, it persists for a broad class of radial location models and radial risks.

Finally, although part of our analysis is asymptotic in the noise level, the phenomenon itself is finite-sample and finite-dimensional. Risk reversal occurs at finite noise levels, for a single observation, and in fixed dimension. The diverging-noise limit is used only as a tractable lens for isolating the underlying geometric mechanism.
\end{remark}

\begin{figure}
    \centering
\begin{tikzpicture}[line cap=round, line join=round, font=\small, baseline={(0,1.5)}]

    %  Sigma Radius 
    \def\sigmaRad{1.0}

    %  1. ORANGE REGION 
    \fill[orange!40] (-.75, 6) 
        .. controls (-0.5, 4.5) and (2.0, 3) .. (3.5, 3) 
        -- (6, 3) 
        -- (6, 6) 
        -- cycle;

    %  2. GAUSSIAN CONTOUR
    \coordinate (theta_star) at (0,0);

    % boundary circle to define the 1-sigma limit 
    \draw[blue!50!gray, thin, dashed, opacity=0.7] (theta_star) circle (\sigmaRad);

    % radius is sigma
    \draw[-, thin, black, shorten <=2pt] (theta_star) -- (300:\sigmaRad) 
    node[midway, above right, inner sep=1pt] {$\sigma$};

    %  3. Original Geometry and Labels 
    \coordinate (top_left) at (0,3);
    \coordinate (top_right) at (6,3);
    \coordinate (mid_top) at (2.5,3);
    \coordinate (Y) at (2.5,4.5);
    \coordinate (theta_s) at (3.8,1.9);
    
    % Main Triangle 
    \draw[blue!60!black, thick] (theta_star) -- (top_left) -- (top_right) -- cycle;

    % connection to Y 
    \draw[gray, thick, dashed] (mid_top) -- (Y);
    \draw[gray, dashed, thick] (Y) -- (theta_s);

    % Dashed Lines for dL and dS 
    \draw[red, thick, dashed] (theta_star) -- (mid_top) 
        node[midway, above, sloped, black] {$d_L(Y)$};
        
    \draw[red, thick, dashed] (theta_star) -- (theta_s) 
        node[midway, below, sloped, black] {$d_S(Y)$};

    % All Points 
    \fill[blue!40!black] (Y) circle (2pt) node[above right, black] {$Y$};
    \fill[blue!40!black] (mid_top) circle (2pt) node[above left, black] {$\hattheta_{L}$};
    \fill[blue!40!black] (theta_star) circle (2pt) node[below left, black] {$\theta^\star$};
    \fill[blue!40!black] (theta_s) circle (2pt) node[below right, black] {$\hattheta_{S}$};
    \node[anchor=north west, inner sep=3pt, orange, scale=2] at (4.5,5) {$\mcG$};
\end{tikzpicture}
\caption{
A schematic illustration of the risk reversal phenomenon.
A realization $Y$ is projected onto the larger constraint set, yielding $\hattheta_{L}$, and onto the smaller set, yielding $\hattheta_{S}$.
In this configuration, the distance from $\theta^\star$ to the projection onto the larger set, $d_L(Y) = \|\Pi_{\Theta_L}(Y) - \theta^\star \|$, is smaller than the corresponding distance from $\theta^\star$ to the projection onto the smaller set, $d_S(Y)= \|\Pi_{\Theta_S}(Y) - \theta^\star \|$. The shaded region denotes the set $\mcG = \{ y: d_L(y) < d_S(y)\}$ of all points for which projecting onto the small set is relatively worse. When the noise level $\sigma$ is sufficiently large, the Gaussian centered at $\theta^\star$ places enough mass on $\mcG$ for the risk associated with projection onto $\Theta_L$ to be smaller than that associated with projection onto $\Theta_S$.
}
    \label{fig:goodregions}
\end{figure}

\subsection{Related literature}

Estimating a signal from Gaussian-corrupted observations is a canonical problem in both statistics and signal processing, where it appears under the names \emph{Gaussian sequence model} and \emph{Gaussian denoising}, respectively. Accordingly, the associated literature is vast. We refer the reader to the monographs of \cite{johnstone2019gaussian} and \cite{tsybakov2009nonparametric} and the references therein for comprehensive treatments. 

A major advance in understanding the risk of the LSE was made in the seminal work of \cite{Chatterjee14}, which provided the first two-sided, pointwise characterization of the risk over general closed, convex constraint sets. That work also established that, although the LSE is not minimax optimal in general, it nevertheless satisfies a form of admissibility. Moreover, \cite{Chatterjee14} emphasized the Gaussian sequence model as a unifying abstraction for studying prominent constrained estimation procedures, including the LASSO and isotonic regression.

Complementary lines of work have highlighted the role of convex and conic geometry in governing the risk of the LSE. In particular, \cite{oymak2016sharp} characterized the vanishing-noise risk of the LSE in terms of the statistical dimension of the tangent cone at the true parameter. The broader role of statistical dimension and related geometric quantities was formalized in the influential work of \cite{amelunxen2014living}, which established sharp phase transition phenomena for the related Gaussian linear measurement model with random Gaussian design. Closely related ideas underlie precise risk analyses for broader classes of estimators, including generalized LASSO and proximal denoising procedures; see, for example, \cite{oymak2013squared}. More recently, these geometric techniques have been extended to characterize the risk of the LSE in the Gaussian linear measurement model in the high-dimensional asymptotic limit; see~\cite{han2023noisy}.

Despite this extensive body of work, the risk reversal phenomenon studied in this paper --- where enlarging a convex constraint set leads to a decrease in risk --- has not, to the best of our knowledge, been previously investigated. Existing analyses of the constrained LSE have primarily focused on suboptimality from a minimax perspective; see, for instance, the recent works of \cite{prasadan2025some}, \cite{aolaritei2025revisiting}, \cite{Matey23} and the discussions therein. Important works in this area include \cite{birge1993rates, kur2020suboptimality, kur2023variance, kur2024convex}, and \cite{zhang2013nearly}. In contrast, our results reveal a qualitatively different failure mode that is not captured by existing minimax analyses of the constrained LSE and persists even when both constraint sets are well-specified.

A related but conceptually distinct phenomenon appears in \cite{fang2019risk}. There, the authors show that for polyhedral constraint sets, the normalized risk of the constrained LSE can be smaller in the misspecified setting, when the true parameter lies outside the constraint set, than in the well-specified case. This comparison uses a notion of misspecified risk that differs from standard squared-error risk measured relative to the true parameter. This behavior is driven by misspecification bias and should be contrasted with the present work, which concerns well-specified models, nested constraint sets, and standard squared-error risk.

Finally, we note a separate line of work --- unrelated to the focus of this paper --- that studies a different notion of \textit{risk monotonicity} studied by \cite{bousquet2022monotone}, \cite{mhammedi2021risk} and \cite{viering2020making}, concerned with how risk evolves as the sample size increases. This notion should not be confused with the risk reversal phenomenon considered here, which arises from geometric effects under fixed sample size and varying constraint sets.

\subsubsection{Chatterjee's risk characterization}
The work of \cite{Chatterjee14} gives one of the broadest available risk characterizations for constrained least squares in the Gaussian sequence model. Given this level of generality, it is natural to ask whether Chatterjee's framework can detect, or rule out, the risk reversal phenomenon studied here. We briefly explain why the resulting one-parameter characterization does not
appear sharp enough for this purpose.

Chatterjee's results are stated in the unit-noise model
$(Y=\theta+Z,\ Z\sim N(0,I_d))$ for the LSE
$\hattheta=\Pi_\Theta(Y)$, where $\Theta\subseteq\mathbb R^d$ is closed and
convex. The risk of the LSE is characterized in terms of the single scale
parameter $t_\theta$, defined as the unique maximizer of
\[
    f_\theta(t)
    =
    \E
    \bigg[
        \sup_{\nu\in\Theta:\ \|\nu-\theta\|\le t}
        \langle Z,\nu-\theta\rangle
    \bigg]
    -
    \frac{t^2}{2}.
\]
Specifically, when $t_\theta\ge1$, Corollary~1.2 of \cite{Chatterjee14} gives the two-sided
characterization
\[
    \E_\theta\|\hattheta-\theta\|^2
    =
    t_\theta^2+O(t_\theta^{3/2}).
\]
For the general noise model $Y=\theta+\sigma Z$, one applies this result to
$\Theta'=\Theta/\sigma$ and $\theta'=\theta/\sigma$, and then multiplies
the resulting unit-noise risk bound by $\sigma^2$. As $\sigma\to0^+$, the
rescaled set $\Theta'$ expands around $\theta'$, so the relevant local
geometry is the tangent cone at $\theta$; see
Section~\ref{sec:vanishing-noise-asymptotic} for a precise definition.

However, this characterization is too coarse for the vanishing-noise comparison.
In that regime, we show that the leading $\sigma^2$ term in the risk expansion is monotone under
nested constraints. Thus any vanishing-noise risk reversal can occur only in a
leading-order degenerate case, where the terms of order $\sigma^2$ agree and the sign
of the risk difference is determined by higher-order corrections. Equivalently,
after rescaling to the unit-noise model, one would need to distinguish risks
that differ by $o(1)$. Since Chatterjee's error bound is of order $t_{\theta'}^{3/2}$ in the regime $t_{\theta'}\ge1$, it is too coarse to resolve the higher-order differences relevant here.

The other regime, $t_\theta<1$, is even less informative for the present purposes. In that case, Corollary~1.2 of \cite{Chatterjee14} gives only a
universal constant-order bound in the unit-noise model. After rescaling back to
noise level $\sigma$, this becomes an $O(\sigma^2)$ upper bound for the
original risk. For bounded constraint sets this can be quite loose in the
diverging-noise regime, where the actual risk remains bounded as
$\sigma\to\infty$.

This issue arises naturally for the bounded polytopes in
the running example. Let
$\operatorname{rad}_\theta(\Theta):=
\sup_{\nu\in\Theta}\|\nu-\theta\|$. Since the supremum in $f_\theta(t)$
saturates once $t\ge \operatorname{rad}_\theta(\Theta)$, the maximizer
satisfies $t_\theta\le \operatorname{rad}_\theta(\Theta)$. Applied to the
rescaled problem, this gives
\[
    t_{\theta'}(\Theta')
    \le
    \frac{\operatorname{rad}_\theta(\Theta)}{\sigma}.
\]
Thus, once $\sigma$ exceeds the geometric radius of $\Theta$, the optimizer
$t_{\theta'}$ necessarily lies below $1$, placing the problem in the
small-$t$ regime of Chatterjee's result.

For the running example, one can compute explicitly that
\[
    t_0\!\left(\frac{\Theta_S}{\sigma}\right)
    =
    \min\left(
        \frac{1}{\sqrt{2\pi}},
        \frac{\|v_2\|}{\sigma}
    \right)
    <1
    \qquad
    \text{for all }\sigma>0,
\]
and
\[
    t_0\!\left(\frac{\Theta_L}{\sigma}\right)
    \le
    \frac{\max(\|v_2\|,\|v_3\|)}{\sigma}.
\]
Hence $t_0(\Theta_L/\sigma)<1$ whenever $\sigma$ exceeds a fixed constant
depending only on the geometry of $\Theta_L$. In this regime, Chatterjee's
corollary yields only an $O(\sigma^2)$ upper bound after rescaling, which is
too coarse to resolve the constant-order diverging-noise limits or the
directional comparisons that drive the risk reversal in this paper.

Thus, although Chatterjee's characterization is highly useful for identifying
the scale of the risk, it does not seem directly suited to detecting the risk
reversal phenomenon studied here.

\section{Asymptotic risk of the least squares estimator} \label{sec:asymptotic-analysis}
Motivated by the findings in Section~\ref{sec:GSM-Counterexample}, we turn to a more systematic investigation of the risk reversal phenomenon. Our goal is to formalize how the risk of the LSE depends on the noise level and the geometry of the constraint set. To this end, we analyze the estimator's behavior in two complementary asymptotic regimes: the vanishing-noise regime $(\sigma \to 0^+)$ and the diverging-noise regime $(\sigma \to \infty)$.

In the vanishing-noise regime, we show that risk reversal cannot occur at the leading $\sigma^2$ scale as $\sigma\to0^+$: for nested correctly specified constraints, the leading-order risk under the smaller constraint is no larger than that under the larger one.
 In contrast, in the diverging-noise regime, we demonstrate that risk reversal can arise much more broadly, extending well beyond the specific example considered in Section~\ref{sec:GSM-Counterexample}. We then use this characterization to construct an example exhibiting worst-case risk reversal, showing that the phenomenon does not merely exist in a pointwise sense.

Taken together, these results clarify the regimes in which risk reversal is structurally possible.

\subsection{Vanishing-noise regime}\label{sec:vanishing-noise-asymptotic}
The risk of the constrained LSE is well understood in the vanishing-noise regime. In particular,~\cite{oymak2016sharp} give a sharp characterization of the limiting risk in terms of the tangent cone at the true parameter. We briefly recall the relevant definitions and results.

For a set $A \subseteq \R^d$ and a vector $b \in \R^d$, we write $\overline{A}$ for the closure of $A$ and $A - b := \{a - b : a \in A\}$. Let $\Theta \subseteq \R^d$ be a compact, convex set and let $\theta \in \Theta$. The \emph{tangent cone} of $\Theta$ at $\theta$ is defined as
\[
T_\Theta(\theta)
:=
\overline{\bigcup_{t \ge 0} t(\Theta - \theta)}.
\]
Since $\Theta$ is compact and convex, $T_\Theta(\theta)$ is a closed convex cone. The \emph{statistical dimension} of this cone is given by
\[
\delta(T_\Theta(\theta))
:=
\E\|\Pi_{T_\Theta(\theta)}(Z)\|^2,
\qquad
Z \sim N(0,I_d).
\]

\cite[Theorem~3.1]{oymak2016sharp} show that, as $\sigma \to 0^{+}$,
\begin{align*}
R_\sigma(\theta; \Theta)
=
\sigma^2\delta(T_\Theta(\theta))
+ o(\sigma^2).
\end{align*}
In Section~\ref{sec:proofs}, we provide a separate, self-contained treatment of this setting in Proposition~\ref{prop:smallSigmaGeneral}, based on a different proof strategy. Our argument relies on classical results on the one-sided directional differentiability of Euclidean projections onto closed convex sets, see \cite{zarantonello1971projections}, \cite{noll1995directional} and \cite{shapiro2016differentiability}. This perspective complements existing analyses and may be of independent interest.

The statistical dimension is monotone under inclusion of closed convex cones \cite[Proposition~3.1]{amelunxen2014living}. In particular, if $\Theta_S\subseteq\Theta_L$ and $\theta^\star\in\Theta_S$, then $T_{\Theta_S}(\theta^\star) \subseteq T_{\Theta_L}(\theta^\star)$ and hence $\delta\!\left(T_{\Theta_S}(\theta^\star)\right) \le \delta\!\left(T_{\Theta_L}(\theta^\star)\right).$ An immediate consequence is that risk reversal cannot occur at order $\sigma^2$ as $\sigma \to 0^+$. 

\begin{theorem}\label{thm:NoRRinSmallNoise} 
Let $\Theta_S \subsetneq \Theta_L$ be nonempty, compact, and convex subsets, and suppose that $\theta^\star\in\Theta_S$. Then
\[
\lim_{\sigma\to0^+}
\frac{
R_\sigma(\theta^\star;\Theta_S)
-
R_\sigma(\theta^\star;\Theta_L)
}{\sigma^2}
\le 0.
\]
\end{theorem} 

\begin{remark}[Interior points in the vanishing-noise limit]
For interior points $\theta^\star \in \operatorname{int}(\Theta)$, standard asymptotic theory for the MLE implies that as $\sigma \to 0^{+}$
\[
\frac{R_\sigma(\theta^\star; \Theta)}{\sigma^2} \to d.
\]
Geometrically, the tangent cone of an interior point satisfies $ T_\Theta(\theta^\star)=\R^d$, since an infinitesimal displacement in any direction remains feasible. Consequently, the associated statistical dimension reduces to
\[
\delta(T_\Theta(\theta^\star))=\delta(\R^d)=\E\|Z\|^2=d.
\]
In particular, if $\theta^\star \in \operatorname{int}(\Theta_S)$, then
$\theta^\star \in \operatorname{int}(\Theta_L)$, and
\[
R_\sigma(\theta^\star; \Theta_S) \sim R_\sigma(\theta^\star; \Theta_L)
\qquad \text{as } \sigma \to 0^{+}.
\]
It follows that any strict risk separation between the two estimators at the leading $\sigma^2$ scale must be driven by boundary effects. 
\end{remark}

We now compute the vanishing-noise risk for the running example. 

We derive the vanishing-noise asymptotics from the tangent-cone characterization, rather than from the explicit risk formulas in Theorem~\ref{thm:hatthetaLExactRisk} and
Theorem~\ref{thm:hatthetaSExactRisk}.

\begin{corollary}\label{cor:VanishingNoiseThetaSLRisks}
In the running example, as $\sigma \to 0^{+}$,
\begin{align*}
R_\sigma(\theta^\star; \Theta_S)
&=
\frac{\sigma^2}{2}
+ o(\sigma^2), \\
R_\sigma(\theta^\star; \Theta_L)
&=
\sigma^2\left(\frac{1}{2} + \frac{1}{\pi}\arctan \frac{1}{c} \right)
+ o(\sigma^2).
\end{align*}

In particular, since $c>0$, we have $R_\sigma(\theta^\star; \Theta_S)
<
R_\sigma(\theta^\star; \Theta_L)$ for all sufficiently small $\sigma>0$.
\end{corollary}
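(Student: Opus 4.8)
The plan is to bypass the exact risk formula of Theorem~\ref{thm:hatthetaLExactRisk} entirely and instead invoke the vanishing-noise characterization of Proposition~\ref{thm:smallSigmaGeneral} (equivalently, Oymak and Hassibi \cite[Theorem~3.1]{oymak2016sharp}), namely $R_\sigma(\theta; \Theta) = \sigma^2\,\delta(T_\Theta(\theta)) + o(\sigma^2)$. The whole problem then reduces to computing the statistical dimension $\delta(T_\Theta(v_1))$ of the tangent cone at $\theta^\star = v_1$ for each of the two constraint sets. First I would identify these cones explicitly. For the segment $\Theta_S = \operatorname{conv}\{v_1,v_2\}$, the tangent cone $T_{\Theta_S}(v_1)$ is the single ray $\{r(v_2-v_1): r\ge 0\} = \{r(\tfrac1c,1): r\ge 0\}$. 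For the triangle $\Theta_L = \operatorname{conv}\{v_1,v_2,v_3\}$, the tangent cone $T_{\Theta_L}(v_1) = \operatorname{cone}\{v_2-v_1,\,v_3-v_1\}$ is the planar wedge spanned by the rays through $(\tfrac1c,1)$ and $(0,1)$.

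For the ray the computation is immediate. Writing $u = (v_2-v_1)/\|v_2-v_1\|$, the projection of $Z \sim N(0,I_2)$ onto the ray equals $\langle Z,u\rangle u$ when $\langle Z,u\rangle \ge 0$ and $0$ otherwise, so $\|\Pi_{T_{\Theta_S}(v_1)}(Z)\|^2 = \langle Z,u\rangle^2\,\indicator\{\langle Z,u\rangle \ge 0\}$. Since $\langle Z,u\rangle \sim N(0,1)$, symmetry gives $\delta(T_{\Theta_S}(v_1)) = \tfrac12\,\E[\langle Z,u\rangle^2] = \tfrac12$, which yields $R_\sigma(\theta^\star;\Theta_S) = \tfrac{\sigma^2}{2} + o(\sigma^2)$.

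The main work is the wedge. Here I would pass to polar coordinates $Z = R(\cos\Phi,\sin\Phi)$, using that $R$ and $\Phi$ are independent, that $\Phi$ is uniform on $[0,2\pi)$, and that $\E[R^2] = 2$. Placing one bounding ray along the positive horizontal axis and writing $\beta$ for the opening angle of the wedge, the plane decomposes into four angular sectors according to where $Z$ projects: the cone itself (angular measure $\beta$, where $\Pi(Z)=Z$), the two adjacent strips of angular width $\tfrac\pi2$ each (where $Z$ projects onto a bounding ray, contributing $R^2\cos^2$ of the offset angle), and the polar cone of measure $\pi-\beta$ (where $\Pi(Z)=0$). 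Integrating $\cos^2$ over each strip contributes $\tfrac\pi4$, so $\E[\|\Pi(Z)\|^2] = 2\cdot\tfrac{1}{2\pi}\bigl(\beta + \tfrac\pi4 + \tfrac\pi4\bigr) = \tfrac12 + \tfrac{\beta}{\pi}$. It then remains to identify $\beta$: since $(\tfrac1c,1)$ and $(0,1)$ make angles $\arctan c$ and $\tfrac\pi2$ with the horizontal, the opening angle is $\beta = \tfrac\pi2 - \arctan c = \arctan\tfrac1c$. Hence $\delta(T_{\Theta_L}(v_1)) = \tfrac12 + \tfrac1\pi\arctan\tfrac1c$, giving the claimed expression for $R_\sigma(\theta^\star;\Theta_L)$.

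The comparison is then immediate: since $\tfrac1\pi\arctan\tfrac1c > 0$ for every $c>0$, we obtain $R_\sigma(\theta^\star;\Theta_S) < R_\sigma(\theta^\star;\Theta_L) + o(\sigma^2)$, consistent with Theorem~\ref{thm:NoRRinSmallNoise}. The only genuine obstacle is the wedge computation, and even there the difficulty is purely bookkeeping—correctly partitioning the plane into the four projection sectors and tracking the $\cos^2$ weights—rather than anything conceptually subtle. A minor point worth verifying is that the degenerate segment produces a one-dimensional ray rather than a two-dimensional cone, so that the two statistical dimensions differ exactly by the wedge's angular contribution $\tfrac1\pi\arctan\tfrac1c$.
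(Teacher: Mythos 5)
Your proposal is correct and follows the same overall strategy as the paper's proof: both reduce the corollary to computing the statistical dimensions $\delta(T_{\Theta_S}(v_1))$ and $\delta(T_{\Theta_L}(v_1))$ via the vanishing-noise expansion $R_\sigma(\theta^\star;\Theta)=\sigma^2\delta(T_\Theta(\theta^\star))+o(\sigma^2)$, and both identify the tangent cones identically (a ray for $\Theta_S$, the wedge $\operatorname{cone}\{v_2,v_3\}$ for $\Theta_L$). Where you genuinely diverge is in how the statistical dimensions are evaluated. The paper invokes the conic intrinsic-volume formalism of \cite{amelunxen2014living}: it writes $\delta(C)=\sum_{k=0}^{2}k\,\Gamma_k(C)$, computes $\Gamma_2$ as the normalized angular measure of the wedge, obtains $\Gamma_0$ from the polar cone (opening angle $\pi-\theta$), and gets $\Gamma_1=\tfrac12$ by complementation. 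You instead compute $\E\|\Pi_C(Z)\|^2$ by direct integration in polar coordinates, partitioning the plane into the cone itself, two adjacent strips of angular width $\tfrac{\pi}{2}$ where the projection lands on a bounding ray, and the polar cone where the projection vanishes; your bookkeeping is right (each $\cos^2$ strip contributes $\tfrac{\pi}{4}$, $\E[R^2]=2$, and $\beta=\tfrac{\pi}{2}-\arctan c=\arctan\tfrac1c$), yielding the same $\delta=\tfrac12+\tfrac{\beta}{\pi}$. Your route is more elementary and self-contained, requiring no external machinery beyond rotational invariance; the paper's route via intrinsic volumes is essentially a packaged version of the same two-dimensional angular argument, but it generalizes more gracefully to higher-dimensional polyhedral cones where explicit sector-by-sector integration becomes unwieldy.

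One small caution: Proposition~\ref{thm:smallSigmaGeneral} as stated requires $\Theta$ to have nonempty interior, which fails for the segment $\Theta_S\subset\R^2$. Your parenthetical appeal to \cite[Theorem~3.1]{oymak2016sharp}, which applies to general closed convex sets and is what the paper itself cites in its proof of this corollary, is the correct justification for $\Theta_S$; leaning on the Proposition alone would leave a (fixable) gap for that set.
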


\begin{remark}[Higher-order vanishing-noise behavior] 
We emphasize that the vanishing-noise result above is a leading-order statement. It
rules out risk reversal at the $\sigma^2$ scale, but it does not completely
preclude the possibility of risk reversal at smaller asymptotic scales. Such a
phenomenon could only arise in a degenerate first-order situation, necessarily
when $\delta\!\left(T_{\Theta_S}(\theta^\star)\right)
    =
    \delta\!\left(T_{\Theta_L}(\theta^\star)\right)$, so that the sign of the risk difference is determined by terms beyond the tangent-cone approximation.

It remains an interesting open question whether such examples exist.
Addressing this would require higher-order expansions of the metric projection
map. The proof of Proposition~\ref{prop:smallSigmaGeneral}, which derives the
leading vanishing-noise behavior from first-order directional differentiability of
Euclidean projections, suggests a natural starting point for such an analysis.
\end{remark}

\subsection{Diverging-noise regime} \label{sec:diverging-noise-asymptotic}

In the preceding section, it was demonstrated that in the vanishing-noise regime, the leading-order term of the LSE risk is determined completely by the statistical dimension, which is monotone under set inclusion. As a consequence, risk reversal must arise from either higher-order or non-local effects. This observation, along with our results in Section~\ref{sec:GSM-Counterexample}, suggests that moderate-to-large noise is the natural regime in which this phenomenon becomes visible at leading order.

In this section, we study the diverging-noise regime, in which the LSE is governed by the global geometry of the constraint set. Our main result, Theorem~\ref{thm:large-sigma-projection}, shows that as $\sigma\to\infty$, the estimator concentrates on exposed faces of $\Theta$ selected by the random direction $U=Z/\|Z\|$, and that its limiting risk is determined by the position of $\theta^\star$ relative to these faces.

 We examine the implications of this result in Section~\ref{ssec:generalGeometricCriterion}, deriving a general geometric criterion for diverging-noise risk reversal. In Section~\ref{sec:cvxPolytopes}, we specialize this criterion to compact convex polytopes, where the limiting risk has a tractable representation as a weighted average over vertices. These results provide the machinery for the worst-case analysis in Section~\ref{sec:sup-risk}. Finally, Section~\ref{ssec:beyondGaussianSqLoss} shows that the same mechanism persists beyond the Gaussian squared-loss setting, for radial likelihoods and radial risks whose estimators coincide with Euclidean projection, generalizing the results of the previous two sections.

Our first result applies in the general setting and characterizes the almost sure behavior of the LSE, together with its risk, in the diverging-noise regime.

\begin{theorem}\label{thm:large-sigma-projection}
Let $\Theta\subseteq\R^d$ be nonempty, compact and convex and fix $\theta^\star\in\Theta$.
Let $Z\sim N(0,I_d)$ and for $\sigma>0$ define $\hattheta_\sigma := \Pi_\Theta(\theta^\star+\sigma Z)$. Set
\[
U:=Z/\|Z\|,
\qquad
F_\Theta(u):=\argmax_{\theta\in\Theta}\langle \theta,u\rangle.
\]
Equivalently, $F_\Theta(u)$ is the exposed face of $\Theta$ in direction $u$. 
Then, almost surely as $\sigma\to\infty$,
\begin{align}\label{eq:large-sigma-a.s}
\hattheta_\sigma \to \Pi_{F_\Theta(U)}(\theta^\star),
\end{align}
and consequently, for $R_\infty(\theta^\star; \Theta) := \E\|\Pi_{F_\Theta(U)}(\theta^\star)-\theta^\star\|^2$,
\begin{align}
R_\sigma(\theta^\star; \Theta) \to R_\infty(\theta^\star; \Theta).
\label{eq:large-sigma-risk}
\end{align}
\end{theorem}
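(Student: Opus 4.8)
The plan is to establish the almost sure statement \eqref{eq:large-sigma-a.s} pointwise in the noise realization, and then upgrade to the risk convergence \eqref{eq:large-sigma-risk} by a domination argument. Fix a realization $z\neq 0$ (which occurs almost surely), write $u = z/\|z\|$, and abbreviate $y_\sigma = \theta^\star + \sigma z = \theta^\star + \sigma\|z\|\,u$. Let $h(u) := \max_{\theta\in\Theta}\langle\theta,u\rangle$ denote the support function, so that $F := F_\Theta(u) = \{\theta\in\Theta : \langle\theta,u\rangle = h(u)\}$ is the exposed face. Since $\Theta$ is compact and convex, $F$ is nonempty, compact, and convex, so $\phi := \Pi_F(\theta^\star)$ is well defined; the goal is to show $\hattheta_\sigma \to \phi$ as $\sigma\to\infty$.

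The workhorse is the elementary optimality inequality for the projection: for every $\theta\in\Theta$,
\[
2\langle y_\sigma,\, \theta - \hattheta_\sigma\rangle \le \|\theta\|^2 - \|\hattheta_\sigma\|^2,
\]
obtained by expanding $\|y_\sigma - \hattheta_\sigma\|^2 \le \|y_\sigma - \theta\|^2$ and cancelling $\|y_\sigma\|^2$. Substituting $y_\sigma = \theta^\star + \sigma\|z\|\,u$ splits the left-hand side into a bounded part involving $\theta^\star$ and a part of order $\sigma$ involving $u$. I would then argue in two passes. First pass: testing against any $\theta\in F$ and using that $\theta,\hattheta_\sigma$ lie in the compact set $\Theta$, I isolate the order-$\sigma$ term to obtain $h(u) - \langle u,\hattheta_\sigma\rangle \le C/(\sigma\|z\|)$ for a constant $C$ depending only on $\Theta$ and $\theta^\star$. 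Letting $\sigma\to\infty$ shows $\langle u,\hattheta_\sigma\rangle \to h(u)$, so every subsequential limit of $\hattheta_\sigma$ (which exists by compactness of $\Theta$) lies in $F$.

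The second pass is the crux and selects the correct point within the face, in the spirit of a Tikhonov selection principle. Testing the optimality inequality against $\theta = \phi\in F$ and using $\langle u,\phi\rangle = h(u) \ge \langle u,\hattheta_\sigma\rangle$, the order-$\sigma$ term $2\sigma\|z\|\langle u, \phi - \hattheta_\sigma\rangle$ is nonnegative and may be dropped from the left-hand side, leaving $2\langle\theta^\star, \phi - \hattheta_\sigma\rangle \le \|\phi\|^2 - \|\hattheta_\sigma\|^2$. Passing to any subsequential limit $\theta_\infty$ and completing the square yields $\|\theta_\infty - \theta^\star\|^2 \le \|\phi - \theta^\star\|^2$. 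Since $\theta_\infty\in F$ while $\phi = \Pi_F(\theta^\star)$ is the unique nearest point of $F$ to $\theta^\star$, the reverse inequality holds automatically, forcing $\theta_\infty = \phi$. As every subsequential limit equals $\phi$ and $\hattheta_\sigma$ ranges in the compact set $\Theta$, the whole net converges, giving \eqref{eq:large-sigma-a.s}. I expect this selection step to be the main obstacle: the first pass only pins the limit down to the entire exposed face, and identifying $\Pi_F(\theta^\star)$ requires extracting the next-order information from the optimality inequality, which is precisely where the geometric role of $\theta^\star$ enters.

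Finally, for the risk \eqref{eq:large-sigma-risk} I would invoke bounded convergence. The map $z\mapsto\hattheta_\sigma$ is continuous, so $\Pi_{F_\Theta(U)}(\theta^\star)$, being an almost sure limit of measurable functions, is measurable, and $\|\hattheta_\sigma - \theta^\star\|^2 \to \|\Pi_{F_\Theta(U)}(\theta^\star) - \theta^\star\|^2$ almost surely. Because $\hattheta_\sigma,\theta^\star\in\Theta$ and $\Theta$ is bounded by $M := \max_{\theta\in\Theta}\|\theta\|$, these random variables are uniformly bounded by the deterministic constant $4M^2$, so the bounded convergence theorem yields $R_\sigma(\theta^\star;\Theta)\to R_\infty(\theta^\star;\Theta)$.
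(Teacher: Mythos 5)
Your proposal is correct and follows essentially the same route as the paper's proof: both arguments first show that all subsequential limits lie in the exposed face $F_\Theta(U)$ via an $O(1/(\sigma\|Z\|))$ bound (you do this directly from the projection optimality inequality, the paper packages it as uniform convergence of the rescaled objective $F_\sigma(\theta)=\langle\theta,U\rangle-\tfrac{1}{2\sigma\|Z\|}\|\theta-\theta^\star\|^2$ to $\langle\theta,U\rangle$), then both select the point within the face by testing optimality against $\Pi_{F_\Theta(U)}(\theta^\star)$ and exploiting the sign of the linear term to get $\|\hattheta_\sigma-\theta^\star\|^2\le\|\Pi_{F_\Theta(U)}(\theta^\star)-\theta^\star\|^2$, and both conclude the risk convergence by dominated/bounded convergence using boundedness of $\Theta$. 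The algebra in your second pass is identical to the paper's key inequality, so the differences are purely presentational.
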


    Theorem~\ref{thm:large-sigma-projection} shows that, as $\sigma\to\infty$, the quadratic penalty in the projection problem becomes negligible relative to the linear term induced by the noise direction $U$. Consequently, the estimator concentrates on the exposed face $F_\Theta(U)$, namely the part of $\Theta$ that is extremal in this random direction. The limiting estimator is therefore obtained by first choosing a face of $\Theta$ according to $U$, and then projecting $\theta^\star$ onto that face. This contrasts sharply with the vanishing-noise regime and shows why, when the noise level diverges, risk comparisons become sensitive to the global embedding of nested constraint sets. Because $F_\Theta(u)$ is nonempty, closed, and convex for every $u \in \mcS^{d-1}$, the projection $\Pi_{F_\Theta(U)}(\theta^\star)$ is well defined.

    \subsubsection{A general geometric criterion for risk reversal} \label{ssec:generalGeometricCriterion}
    We now use Theorem~\ref{thm:large-sigma-projection} to obtain a general geometric criterion for risk reversal in the diverging-noise regime. Since the limiting estimator is obtained by projecting $\theta^\star$ onto the exposed face selected by the random direction $U$, the limiting risk is determined by the average squared distance from $\theta^\star$ to these exposed faces. This motivates the following distance functional. For a compact convex set $\Theta$, define
\[
    \rho_\Theta(u;\theta^\star)
    :=
    \operatorname{dist}\!\left(\theta^\star,F_\Theta(u)\right)
    =
    \|\Pi_{F_\Theta(u)}(\theta^\star)-\theta^\star \|.
\]
Thus $\rho_\Theta(u;\theta^\star)$ is the distance from the signal $\theta^\star$ to the face of $\Theta$ selected by the direction $u$. 

Since $U=Z/\|Z\|\sim\operatorname{Unif}(\mcS^{d-1})$,
Theorem~\ref{thm:large-sigma-projection} gives
\[
\begin{aligned}
    R_\infty(\theta^\star;\Theta_S)
    -
    R_\infty(\theta^\star;\Theta_L)
    &=
    \E_U\left[
        \rho_{\Theta_S}(U;\theta^\star)^2
        -
        \rho_{\Theta_L}(U;\theta^\star)^2
    \right].
\end{aligned}
\]
Thus, in the diverging-noise limit, risk reversal occurs precisely when the
right-hand side is strictly positive; equivalently, when the exposed face of
the smaller set is farther from $\theta^\star$, on average over a uniformly
random direction, than the exposed face of the larger set.

Whenever this limiting risk gap is strictly positive, the convergence in Theorem~\ref{thm:large-sigma-projection} implies that $R_\sigma(\theta^\star;\Theta_S)>R_\sigma(\theta^\star;\Theta_L)$ for all sufficiently large $\sigma$. The next two corollaries apply this limiting-risk representation in two complementary geometries.

First, Corollary~\ref{cor:balls} shows that nested Euclidean balls with a
common center cannot exhibit diverging-noise risk reversal. In that case, every
direction exposes the boundary point lying in that direction from the center.
Enlarging the ball simply moves each exposed point radially outward by the same
amount. Since the signal lies inside the smaller ball, this increases the
average squared exposed-face distance, so tightening the ball can only decrease
the diverging-noise risk. Second, Corollary~\ref{cor:ellipse} shows that risk reversal can occur in the case of centered axis-aligned ellipses. We consider two such ellipses with the same long horizontal semi-axis. The smaller ellipse is relatively thinner in the vertical direction. Increasing the vertical semi-axis enlarges and rounds the ellipse. As a result, many directions that previously exposed points near the horizontal tips instead expose points closer to the origin. Hence the average exposed-face distance to the signal decreases, and the larger ellipse has smaller diverging-noise risk. This example is depicted in Figure~\ref{fig:ellipse-risk-reversal}.

\begin{corollary}\label{cor:balls}
Let $x_0\in\R^d$ and, for $r>0$, define $B(x_0,r):=\{y\in\R^d:\|y-x_0\|\le r\}$. Let $0<r_S<r_L$, and set
\[
    \Theta_S:=B(x_0,r_S),
    \qquad
    \Theta_L:=B(x_0,r_L).
\]
Then $\Theta_S\subsetneq\Theta_L$, and for every
$\theta^\star\in\Theta_S$, $R_\infty(\theta^\star;\Theta_S) < R_\infty(\theta^\star;\Theta_L)$.
\end{corollary}

\begin{corollary} \label{cor:ellipse}
For $a,b>0$, define the centered axis-aligned ellipse in $\R^2$ by
\[
    \mcE(a,b)
    :=
    \left\{
        y \in\mathbb R^2:
        \frac{y_1^2}{a^2}+\frac{y_2^2}{b^2}\le1
    \right\}.
\]
Let 
\[ \Theta_S:=\mcE(a,b_S), \qquad \Theta_L:=\mcE(a,b_L), \qquad 0<b_S<b_L<\frac a2. 
\]
Then $\Theta_S \subsetneq \Theta_L$ and $R_\infty(0; \Theta_S)
    >
    R_\infty(0;\Theta_L)$.
\end{corollary}

\begin{remark}[Risk reversal is not a boundary or dimensionality artifact]
Recall that our running example has two special features: the signal is placed
at an extreme point, $\theta^\star=v_1$, and the smaller and larger constraint sets have different dimensions,
\[
    \dim(\Theta_S)=1<2=\dim(\Theta_L).
\]
Corollary~\ref{cor:ellipse} demonstrates that neither feature is necessary for the phenomenon to occur. Specifically, risk reversal is possible even for nested smooth, full-dimensional convex bodies and even when $\theta^\star\in \operatorname{int}(\Theta_S) \cap \operatorname{int}(\Theta_L)$.
This contrasts with the vanishing-noise behavior discussed in the remark following Theorem~\ref{thm:NoRRinSmallNoise}. When $\theta^\star$ lies in the
interior of the smaller constraint set, both constraints are locally inactive
near $\theta^\star$, and the two risks are asymptotically identical in that regime.
\end{remark}

\begin{figure}[t]
\centering
\resizebox{0.98\textwidth}{!}{%
\begin{tikzpicture}
    \begin{groupplot}[
        group style={group size=2 by 1, horizontal sep=2.2cm},
        width=0.46\textwidth,
        height=0.34\textwidth,
        axis lines=middle,
        xmin=-4.8, xmax=4.8,
        ymin=-2.6, ymax=2.6,
        xtick=\empty,
        ytick=\empty,
        axis line style={->},
        clip=false
    ]

    % =========================================================
    % LEFT PANEL: nested ellipses and exposed points
    % =========================================================

    \nextgroupplot[
    axis equal image,
]

% parameters
\def\a{4}
\def\bS{0.8}
\def\bL{1.6}
\def\ticklen{0.18}

% direction color and tangent color
\colorlet{dirB}{violet!80!black}
\colorlet{red}{red!95!black}

% outer ellipse \mcE(a,b_L)
\addplot[
    domain=0:360,
    samples=200,
    smooth,
    thick,
    blue!60!black
]
({\a*cos(x)},{\bL*sin(x)});

% inner ellipse \mcE(a,b_S)
\addplot[
    domain=0:360,
    samples=200,
    smooth,
    thick,
    orange
]
({\a*cos(x)},{\bS*sin(x)});

% ellipse labels
\node[blue!60!black, font=\scriptsize] at (axis cs:2.5,-1.8) {$\Theta_L$};
\node[orange!97!black, font=\scriptsize] at (axis cs:1.1,-1.15) {$\Theta_S$};

% =========================================================
% Single exposing direction u_2: phi = 73 deg
% =========================================================
\pgfmathsetmacro{\phib}{73}
\pgfmathsetmacro{\denSb}{sqrt(\a*\a*cos(\phib)^2 + \bS*\bS*sin(\phib)^2)}
\pgfmathsetmacro{\xSb}{(\a*\a*cos(\phib))/\denSb}
\pgfmathsetmacro{\ySb}{(\bS*\bS*sin(\phib))/\denSb}
\pgfmathsetmacro{\denLb}{sqrt(\a*\a*cos(\phib)^2 + \bL*\bL*sin(\phib)^2)}
\pgfmathsetmacro{\xLb}{(\a*\a*cos(\phib))/\denLb}
\pgfmathsetmacro{\yLb}{(\bL*\bL*sin(\phib))/\denLb}

% exposing direction
\draw[dirB,thick, ->]
    (axis cs:0,0) -- (axis cs:{3.2*cos(\phib)},{3.2*sin(\phib)});
\node[dirB, anchor=west, font=\scriptsize]
    at (axis cs:{3.35*cos(\phib)},{3.35*sin(\phib)}) {$u$};

% distance segments from theta^* to exposed points
\draw[red, dashed, thick]
    (axis cs:0,0) -- (axis cs:\xSb,\ySb);
\draw[red, dashed, thick]
    (axis cs:0,0) -- (axis cs:\xLb,\yLb);

% red tangent markers at the exposed points
\draw[red, thick]
    (axis cs:{\xSb-\ticklen*sin(\phib)},{\ySb+\ticklen*cos(\phib)})
    --
    (axis cs:{\xSb+\ticklen*sin(\phib)},{\ySb-\ticklen*cos(\phib)});

\draw[red, thick]
    (axis cs:{\xLb-\ticklen*sin(\phib)},{\yLb+\ticklen*cos(\phib)})
    --
    (axis cs:{\xLb+\ticklen*sin(\phib)},{\yLb-\ticklen*cos(\phib)});

    % theta^* at the origin
\node[text=blue!80!black, font=\scriptsize] at (axis cs:0,0) {$\star$};
\node[blue!80!black, below left, font=\scriptsize] at (axis cs:0,0) {$\theta^\star$};

    % =========================================================
    % RIGHT PANEL: risk as a function of b
    % =========================================================
\nextgroupplot[
    width=0.40\textwidth,
    height=0.30\textwidth,
    xlabel={$b$},
    ylabel style={
        anchor=south,
        font=\scriptsize
    },
    xlabel style={
    anchor=north,
    font=\scriptsize
    },
    xmin=0, xmax=4.5,
    ymin=5.7, ymax=17,
    xtick={0,0.8,1.6,2},
    xticklabels={$0$,$b_S$,$b_L$,$\frac{a}{2}$},
]
\node[anchor=east, font=\scriptsize, xshift=-4pt]
    at (axis description cs:0.0,1.0)
    {$R_\infty(0;\mcE(a,b))$};
% risk curve: 1/2(a^2+b^2-ab), with a=4
\addplot[
    domain=0:3.9,
    samples=200,
    thick,
    black
]
{(16 + x^2 - 4*x)};

% risk values at b_S and b_L
\pgfmathsetmacro{\riskS}{(16 + 0.8^2 - 4*0.8)}
\pgfmathsetmacro{\riskL}{(16 + 1.6^2 - 4*1.6)}

% vertical lines at b_S, b_L, a/2
\addplot[dashed, orange!97!black] coordinates {(0.8,5.7) (0.8,\riskS)};
\addplot[dashed, blue!60!black] coordinates {(1.6,5.7) (1.6,\riskL)};
\addplot[densely dashed, gray] coordinates {(2,5.7) (2,16.3)};

% points on the curve
\addplot[mark=*, only marks, mark size=1.6pt, orange!97!black]
    coordinates {(0.8,{(16 + 0.8^2 - 4*0.8)})};
\addplot[mark=*, only marks, mark size=1.6pt, blue!60!black]
    coordinates {(1.6,{(16 + 1.6^2 - 4*1.6)})};

% horizontal dashed lines from the y-axis to the points
\addplot[dashed, orange!97!black]
    coordinates {(0,\riskS) (0.8,\riskS)};
\addplot[dashed, blue!60!black]
    coordinates {(0,\riskL) (1.6,\riskL)};

% labels on the y-axis
\node[orange!97!black, font=\scriptsize, anchor=east, xshift=-2pt]
    at (axis cs:0,\riskS)
    {$R_\infty(0;\Theta_S)$};

\node[blue!60!black, font=\scriptsize, anchor=east, xshift=-2pt]
    at (axis cs:0,\riskL)
    {$R_\infty(0;\Theta_L)$};

    \end{groupplot}
\end{tikzpicture}
}
\caption{Risk reversal for nested centered ellipses, as constructed in Corollary~\ref{cor:ellipse}. In the left panel, the arrow $u$ denotes an exposing
direction. For this direction, the exposed faces $F_{\Theta_S}(u)$ and
$F_{\Theta_L}(u)$ are singletons, whose locations are indicated by the
small red tangent-line markers. The dashed red segments from
$\theta^\star$ to these exposed points indicate the distances that enter the
diverging-noise risk. The right panel plots $R_\infty(0;\mcE(a,b))$ as a function
of the vertical semi-axis $b$, with $a$ fixed. On the interval $0<b<\frac{a}{2}$, this risk decreases as $b$ increases, so the larger ellipse has
smaller diverging-noise risk.} 
\label{fig:ellipse-risk-reversal}
\end{figure}

\subsubsection{Discrete selection: convex polytopes}\label{sec:cvxPolytopes}
We now restrict to the setting in which $\Theta$ is a convex polytope, that is, 
\[
\Theta = \operatorname{conv}(v_1,\dots,v_K),
\]
where $v_1,\dots,v_K \in \R^d$ are its vertices. This setting includes the running example and covers many standard constraint sets that arise in stochastic optimization, such as probability simplices, $\ell_1$-balls, and bounded feasible regions defined by finitely many linear inequalities. Moreover, the exposed face $F_\Theta(U)$ appearing in
Theorem~\ref{thm:large-sigma-projection} now takes a particularly tractable form:
almost surely, it is a single vertex of $\Theta$. This allows the
diverging-noise risk to be written as a weighted average of squared distances
from $\theta^\star$ to the vertices, shedding further light on the geometric
mechanism behind risk reversal.

Before stating the main result of this section, we introduce some additional notation: let $\mu_{d-1}$ denote the $(d-1)$-dimensional surface
measure on the unit sphere $\mcS^{d-1}$. The uniform distribution $U$ on $\mcS^{d-1}$ is characterized for any measurable $A\subseteq \mcS^{d-1}$ by
\[
\P(U\in A)=\frac{\mu_{d-1}(A)}{\mu_{d-1}(\mcS^{d-1})}.
\]
Further, the normal cone of $\Theta$ at a vertex $v_i$ is 
\[
\mcN_\Theta(v_i)
=
\{ u \in \R^d :
\langle u, v_j - v_i \rangle \le 0
\ \text{for all } j = 1,\dots,K
\}.
\]

\begin{lemma}\label{lem:polytopeMaxAtVertex}
Let $\Theta \subseteq \R^d$ be a nonempty compact convex polytope with vertices
$v_1,\dots,v_K$. There exists a random index $I \in \{1,\dots,K\}$ such that,
almost surely,
\[
F_\Theta(U) = \{v_I\}.
\]
Consequently, in the diverging-noise regime $\sigma\to\infty$, almost surely,
\[
\hattheta_\sigma \to v_I.
\]
Moreover, the limiting risk admits the representation
\[
R_\infty(\theta; \Theta)
=
\sum_{i=1}^K p_i  \|v_i-\theta\|^2,
\qquad
p_i := \P(I=i)
=
\frac{\mu_{d-1}(\mcN_\Theta(v_i)\cap \mathcal S^{d-1})}
{\mu_{d-1}(\mathcal S^{d-1})}.
\]
\end{lemma}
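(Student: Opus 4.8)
The plan is to reduce the computation of $F_\Theta(U)$ to a finite combinatorial selection among the vertices, show this selection is almost surely unique, and then read off the limiting risk from Theorem~\ref{thm:large-sigma-projection}. The starting point is the standard fact that a linear functional attains its maximum over a polytope on a face spanned by vertices: for every $u$, the exposed face satisfies $F_\Theta(u) = \operatorname{conv}\{v_i : \langle v_i, u\rangle = \max_j \langle v_j, u\rangle\}$. In particular the maximum over $\Theta$ is always attained at some vertex, so the cones $\mcN_\Theta(v_i)$ cover $\R^d$; moreover $F_\Theta(u)$ reduces to a single vertex $\{v_i\}$ precisely when the maximum among the vertices is attained uniquely, equivalently when $u$ lies in the interior of the cone $\mcN_\Theta(v_i)$.

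Next I would control the exceptional directions. If $F_\Theta(u)$ is not a single vertex, then two distinct vertices $v_i \neq v_j$ both maximize, which forces $\langle u, v_i - v_j\rangle = 0$. Hence the set of directions for which the maximizer fails to be a unique vertex is contained in the finite union of hyperplanes $\bigcup_{i \neq j}\{u \in \R^d : \langle u, v_i - v_j\rangle = 0\}$. Since the vertices are distinct, each such hyperplane is a genuine $(d-1)$-dimensional subspace, whose intersection with $\mathcal{S}^{d-1}$ has $\mu_{d-1}$-measure zero, and a finite union of these remains $\mu_{d-1}$-null. Because $Z \sim N(0,I_d)$ is nonzero almost surely and $U = Z/\|Z\|$ is uniformly distributed on $\mathcal{S}^{d-1}$ (hence assigns zero probability to $\mu_{d-1}$-null sets), I conclude that almost surely $F_\Theta(U) = \{v_I\}$, where $I$ is the a.s.-unique index with $U \in \operatorname{int}\mcN_\Theta(v_i)$; the disjoint interiors $\operatorname{int}\mcN_\Theta(v_i)$ make this index well defined up to a null event.

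With the selection in hand, the remaining claims follow quickly. Projecting $\theta^\star$ onto the singleton $\{v_I\}$ gives $\Pi_{F_\Theta(U)}(\theta^\star) = v_I$, so Theorem~\ref{thm:large-sigma-projection} yields $\hattheta_\sigma \to v_I$ almost surely and $R_\infty(\theta^\star; \Theta) = \E\|v_I - \theta^\star\|^2$. Conditioning on $I$, and using that the events $\{I=i\}$ partition the probability space up to a null set, gives
\[
R_\infty(\theta^\star; \Theta) = \sum_{i=1}^K \P(I=i)\,\|v_i - \theta^\star\|^2 .
\]
Finally, since $\{I=i\}$ agrees up to measure zero with $\{U \in \mcN_\Theta(v_i)\}$, uniformity of $U$ on the sphere identifies $\P(I=i) = p_i = \mu_{d-1}\big(\mcN_\Theta(v_i)\cap\mathcal{S}^{d-1}\big)/\mu_{d-1}(\mathcal{S}^{d-1})$.

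The only delicate point is the measure-zero argument, and in particular ensuring it is robust to $\Theta$ being lower-dimensional, as with the segment $\Theta_S$ in Example~\ref{ex:running-example}. Even then the normal cones at vertices remain full-dimensional and their pairwise overlaps still lie inside the hyperplanes $\{\langle u, v_i - v_j\rangle = 0\}$, so nothing in the argument changes: the directions that maximize over a positive-dimensional face form precisely the same null set. This is the step demanding the most care, but it becomes elementary once the exposed-face description of $F_\Theta(u)$ and the uniform law of $U$ are available.
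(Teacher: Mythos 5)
Your proof is correct and takes essentially the same route as the paper's: reduce maximization of $\langle\cdot,U\rangle$ over the polytope to maximization over the finitely many vertices, observe that ties between distinct vertices confine $U$ to a finite union of great subspheres of $\mu_{d-1}$-measure zero (hence a null event for uniform $U$), and then obtain the risk formula from Theorem~\ref{thm:large-sigma-projection} by conditioning on the almost-surely unique selected vertex. The extra details you supply --- the explicit conditioning step yielding $\sum_i p_i\|v_i-\theta^\star\|^2$ and the observation that the argument is unaffected when $\Theta$ is lower-dimensional --- are correct and simply make explicit what the paper's proof leaves implicit.
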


\begin{remark}[Vertex selection and risk redistribution]
In the diverging-noise regime, projection onto a convex polytope asymptotically reduces to a discrete selection problem among its vertices. By Lemma~\ref{lem:polytopeMaxAtVertex}, for each vertex $v_i$ of the polytope $\Theta$, the event $\{I=i\}$ occurs precisely when the random direction $U=Z/\|Z\|$ lies in the normal cone $\mcN_\Theta(v_i)$. Thus, the estimator selects a single vertex $v_I$, and the risk is the expected squared error incurred by this random vertex choice.

From this perspective, what governs the risk is not only which vertices are close to $\theta^\star$, but how often they are selected. Even if a vertex $v_i$ lies close to $\theta^\star$ in squared distance, its contribution to the risk can increase or decrease depending on whether the probability $p_i$ of selecting it grows or shrinks.

This viewpoint clarifies how risk reversal can occur. Consider nested polytopes
$\Theta_S\subsetneq\Theta_L$, and suppose for concreteness that $\Theta_L=\operatorname{conv}(\Theta_S\cup\{v\})$ for some additional vertex $v$. Adding $v$ introduces a new normal cone with positive spherical measure. Since the normal cones partition $\mcS^{d-1}$ up to a null set, this necessarily redistributes selection probability across vertices: some vertices lose probability mass while others gain it. Risk reversal arises when this redistribution transfers probability mass away from vertices that are far from $\theta^\star$ toward vertices that are closer to $\theta^\star$, thereby reducing risk. Conversely, shrinking the constraint can diminish the selection region of a nearby vertex, forcing its probability mass to be reassigned to more distant vertices and increasing risk.

Figure~\ref{fig:angular-mass-redistribution} illustrates this mechanism in the running example with $c=0.5$.
\end{remark}

\begin{figure}
    \centering
\begin{tikzpicture}[scale=1.05, line cap=round, line join=round, font=\small]

% Colors
\definecolor{myblue}{RGB}{0,114,178}
\definecolor{myorange}{RGB}{230,159,0}
\definecolor{mygreen}{RGB}{0,158,115}

% Global parameters
\pgfmathsetmacro{\phi}{63.435}
\pgfmathsetmacro{\titley}{1.8}

% ==========================================
% Left panel: geometry
% ==========================================
\begin{scope}[shift={(-4.4,0)}]
    \node[font=\small] at (1, \titley) {Constraint sets};
    \coordinate (v1) at (0,0);
    \coordinate (v2) at (2,1);
    \coordinate (v3) at (0,1);

    \draw[thick, black] (v1)--(v3)--(v2);
    \draw[thick, black] (v1)--(v2);

    \fill[myblue] (v1) circle (2.5pt)
    node[below left, black, inner sep=1pt] {\small $\theta^\star=v_1$};
    \fill[myorange] (v2) circle (2.5pt) node[right, black] {\small $v_2$};
    \fill[mygreen]  (v3) circle (2.5pt) node[left, black] {\small $v_3$};
\end{scope}

% ==========================================
% Middle panel: Theta_S
% ==========================================
\begin{scope}[shift={(0,0.45)}]
    \node[font=\small] at (0, \titley-0.45) {Normal sectors: $\Theta_S$};

    \fill[myorange!35] (0,0) -- ({-\phi}:1) arc ({-\phi}:{180-\phi}:1) -- cycle;
    \fill[myblue!35]   (0,0) -- ({180-\phi}:1) arc ({180-\phi}:{360-\phi}:1) -- cycle;

    \draw[thin, gray!50] (0,0) circle (1);
    \draw[myorange, ultra thick] ({-\phi}:1) arc ({-\phi}:{180-\phi}:1);
    \draw[myblue, ultra thick]   ({180-\phi}:1) arc ({180-\phi}:{360-\phi}:1);

    \node[myorange, font=\bfseries] at (0.42,0.42) {$p_2$};
    \node[myblue,   font=\bfseries] at (-0.42,-0.42) {$p_1$};
\end{scope}

% ==========================================
% Right panel: Theta_L
% ==========================================
\begin{scope}[shift={(3.6,0.45)}]
    \node[font=\small] at (0, \titley-0.45) {Normal sectors: $\Theta_L$};

    \fill[myorange!35] (0,0) -- ({-\phi}:1) arc ({-\phi}:90:1) -- cycle;
    \fill[mygreen!35]  (0,0) -- (90:1) arc (90:180:1) -- cycle;
    \fill[myblue!35]   (0,0) -- (180:1) arc (180:{360-\phi}:1) -- cycle;

    \draw[thin, gray!50] (0,0) circle (1);
    \draw[myorange, ultra thick] ({-\phi}:1) arc ({-\phi}:90:1);
    \draw[mygreen, ultra thick]  (90:1) arc (90:180:1);
    \draw[myblue, ultra thick]   (180:1) arc (180:{360-\phi}:1);

    \node[myorange, font=\bfseries] at (0.48,0.28) {$p_2$};
    \node[mygreen,  font=\bfseries] at (-0.35,0.45) {$p_3$};
    \node[myblue,   font=\bfseries] at (-0.42,-0.42) {$p_1$};
\end{scope}

\end{tikzpicture}
\caption{
Illustration of the mass redistribution in the running example with $c=0.5$. The middle and right panels show the spherical normal-cone sectors
$\mcN_\Theta(v_i)\cap\mathcal S^1$, whose arc lengths determine the vertex selection probabilities $p_i=\mathbb P(I=i)$ in the diverging-noise limit. Passing from $\Theta_S$ to $\Theta_L$ introduces the vertex $v_3$ and reallocates selection probability from both $v_1$ and $v_2$. The reduction in the contribution of the distant vertex $v_2$ outweighs the new contribution from $v_3$, thereby lowering the limiting risk.
}
\label{fig:angular-mass-redistribution}
\end{figure}

We apply the theory developed in this section to compute the diverging-noise risk expressions for the running example. We first record a general computation for a one-parameter family of triangles; the formulas for $\Theta_S$ and $\Theta_L$ will then follow by specialization.

\begin{lemma}\label{lem:ThetaARisk}
Fix $c>0$ and $x \in [0,c^{-1}]$. Define $\Theta_x := \operatorname{conv}(v_1, v_2, v_x)$, where
\[
v_1 = (0,0), \qquad
v_2 = (c^{-1},1), \qquad
v_x = (x,1).
\] 
Then, for $\theta^\star := v_1,$
\begin{align*}
R_\infty(\theta^\star; \Theta_x)
=
\inparen{1+\frac{1}{c^2}} \left(\frac14 + \frac{1}{2\pi} \arctan \frac1c \right)
+ (1+x^2)\left(\frac14 - \frac{1}{2\pi} \arctan x\right).
\end{align*}
\end{lemma}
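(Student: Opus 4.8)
The plan is to invoke Lemma~\ref{lem:polytopeMaxAtVertex} for the triangle $\Theta_x=\operatorname{conv}\{v_1,v_2,v_x\}$ and thereby reduce the computation of $R_\infty(\theta^\star;\Theta_x)$ to the evaluation of two planar angles. Since $\theta^\star=v_1$, Lemma~\ref{lem:polytopeMaxAtVertex} gives
\[
R_\infty(\theta^\star;\Theta_x)
=\sum_{i\in\{1,2,x\}} p_i\,\|v_i-\theta^\star\|^2
= p_2\,\|v_2\|^2 + p_x\,\|v_x\|^2,
\]
where the $v_1$-term drops out because $\|v_1-\theta^\star\|=0$, and $p_i$ is the normalized spherical measure of $\mathcal N_{\Theta_x}(v_i)\cap\mathcal S^{1}$. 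The two squared norms are immediate, $\|v_2\|^2=1+1/c^2$ and $\|v_x\|^2=1+x^2$, so it remains only to compute $p_2$ and $p_x$. The $o(1)$ in the statement simply records that, by Theorem~\ref{thm:large-sigma-projection}, the finite-$\sigma$ risk $R_\sigma(\theta^\star;\Theta_x)$ converges to $R_\infty(\theta^\star;\Theta_x)$ as $\sigma\to\infty$; the displayed expression is the exact value of the limit.

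In dimension $d=2$ the uniform direction $U$ lies on the unit circle, so each $p_i$ equals the angular width of the normal cone at $v_i$ divided by $2\pi$. Because $\Theta_x$ is a triangle, the normal cone at a vertex is the planar wedge generated by the two outward unit normals to the edges meeting there, and its angular width is the exterior angle at that vertex. I would compute these normals explicitly. At $v_2=(1/c,1)$ the incident edges point along $v_1-v_2=(-1/c,-1)$ and $v_x-v_2=(x-1/c,0)$; using $0<x<1/c$ to fix the outward orientation, the outward normals are $(1,-1/c)$ and $(0,1)$, which subtend an angle $\tfrac{\pi}{2}+\arctan\tfrac1c$. Similarly, at $v_x=(x,1)$ the outward normals to the edges toward $v_1$ and $v_2$ are $(-1,x)$ and $(0,1)$, subtending an angle $\tfrac{\pi}{2}-\arctan x$. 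Dividing by $2\pi$ yields $p_2=\tfrac14+\tfrac{1}{2\pi}\arctan\tfrac1c$ and $p_x=\tfrac14-\tfrac{1}{2\pi}\arctan x$, and substituting into the display above gives exactly the claimed formula.

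I expect the main obstacle to be the bookkeeping of outward orientations and, in particular, the identification of the correct arc swept by each pair of normals. When $x\to 1/c$ the points $v_1,v_2,v_x$ become nearly collinear, so the normal cone at $v_1$ is generated by two nearly antipodal rays, $(1,-1/c)$ and $(-1,x)$; one must verify that the convex cone they generate is the wedge of angle strictly less than $\pi$ rather than its reflex complement, and the hypothesis $x<1/c$ is precisely what guarantees the correct orientation throughout. As a built-in consistency check I would confirm that the three exterior angles sum to $2\pi$, equivalently $p_1+p_2+p_x=1$, which forces $p_1=\tfrac12-\tfrac{1}{2\pi}(\arctan\tfrac1c-\arctan x)$ and validates the angular accounting. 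A minor secondary point is to note that $v_x$ is a genuine vertex of $\Theta_x$ for every $x\in(0,\tfrac1c)$ (the three points are non-collinear), so Lemma~\ref{lem:polytopeMaxAtVertex} applies with $K=3$, and that the pairwise intersections of the normal cones are null and hence do not affect the measure computation.
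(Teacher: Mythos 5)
Your proposal is correct, and it reaches the result by a genuinely different computation of the vertex-selection probabilities. You and the paper start identically: both invoke Lemma~\ref{lem:polytopeMaxAtVertex} to reduce $R_\infty(\theta^\star;\Theta_x)$ to $p_2\|v_2\|^2+p_x\|v_x\|^2$, with the $v_1$ term vanishing because $\theta^\star=v_1$. The divergence is in evaluating $p_2$ and $p_x$. The paper writes the selection events as half-plane intersections, $\{U_1/c+U_2\ge0,\,U_1\ge0\}$ for $v_2$ and $\{xU_1+U_2\ge0,\,U_1\le0\}$ for $v_x$, and computes their probabilities through the ratio $R=U_2/U_1$, which is standard Cauchy both unconditionally and conditionally on the sign of $U_1$ (this is exactly what the two appendix lemmas, Lemma~\ref{lem:ratioUnifSphere} and Lemma~\ref{lem:ratioUnifSphereConditionalOnPositiveU2}, are for). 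You instead compute the normal cones directly as wedges generated by the outward edge normals and read off their angular widths as exterior angles: your normals $(1,-1/c),(0,1)$ at $v_2$ and $(-1,x),(0,1)$ at $v_x$ are the correct outward orientations given $0<x<1/c$, the resulting wedges coincide exactly with the paper's events, and the widths $\tfrac{\pi}{2}+\arctan\tfrac1c$ and $\tfrac{\pi}{2}-\arctan x$ divided by $2\pi$ give the same $p_2,p_x$. Your route is more elementary and purely geometric—it bypasses the Cauchy machinery entirely and carries a built-in sanity check ($p_1+p_2+p_x=1$, i.e., exterior angles summing to $2\pi$)—while the paper's probabilistic route packages the angle computation into reusable distributional lemmas. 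One small remark: your worry about choosing the wedge versus its reflex complement resolves itself, since the normal cone is by definition the \emph{convex} cone generated by the two outward normals and so always has opening angle at most $\pi$; the hypothesis $x<1/c$ is needed only to fix which side of each edge is outward, exactly as you used it.
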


In the running example, taking $x=0$ and $x=c^{-1}$ in
Lemma~\ref{lem:ThetaARisk} gives the diverging-noise limiting risks for
$\Theta_L$ and $\Theta_S$, respectively. This yields the following
corollary.

\begin{corollary}\label{cor:DivergingNoiseThetaSLRisks}
In the running example, as $\sigma \to \infty$,
\begin{align*}
R_\sigma(\theta^\star; \Theta_S)
&=
\frac{\alpha_c}{2}
+ o(1), \\ 
R_\sigma(\theta^\star; \Theta_L)
&=
\alpha_c \inparen{\frac14 + \frac{1}{2\pi} \arctan \frac1c } + \frac14
+ o(1),
\end{align*}
where $\alpha_c = 1+\frac{1}{c^2}$. Consequently, if $0<c<1$, then $R_\sigma(\theta^\star; \Theta_S)
>
R_\sigma(\theta^\star; \Theta_L)$ for all sufficiently large $\sigma$.
\end{corollary}

\begin{remark}[A quantitative diverging-noise threshold]
The preceding corollary shows that risk reversal holds for all sufficiently
large $\sigma$, but does not provide an explicit threshold. In
Appendix~\ref{sec:quantitative-bounds-polytopes}, we make this implication quantitative for general compact convex polytopes by giving an explicit sufficient
diverging-noise threshold. The argument gives a uniform $O(\sigma^{-1})$ bound on the convergence of the finite-noise risks to their diverging-noise limits. This rate is sharp in $\sigma$, although the resulting threshold is
only sufficient and need not coincide with the actual crossing point of the two
finite-$\sigma$ risks. Obtaining sharper finite-noise conditions remains an interesting direction for future work.
\end{remark}

\subsubsection{Worst-case risk}\label{sec:sup-risk}
In this section, we study worst-case risk reversal and develop the technical tools needed to prove Theorem~\ref{thm:worst-case-risk-reversal}. Recall that worst-case risk reversal occurs when two nested constraint sets $\Theta_S \subsetneq \Theta_L$ satisfy
\begin{align*}
    \sup_{\theta \in \Theta_S} R_\sigma(\theta; \Theta_S)
    >
    \sup_{\theta \in \Theta_L} R_\sigma(\theta; \Theta_L),
\end{align*}
so that the constrained estimator associated with the smaller feasible set has strictly larger worst-case risk.

Our first step is to strengthen the pointwise diverging-noise convergence of the risk function established in Theorem~\ref{thm:large-sigma-projection} to uniform convergence over the parameter space.

\begin{lemma}\label{lem:uniform-large-sigma}
Let $\Theta\subseteq\R^d$ be nonempty, compact, and convex. Then, as
$\sigma\to\infty$,

\[
\sup_{\theta\in\Theta}
\left|
R_\sigma(\theta;\Theta)-R_\infty(\theta;\Theta)
\right|
\to 0.
\]
Consequently, $\sup_{\theta\in\Theta}R_\sigma(\theta;\Theta)
\to \sup_{\theta\in\Theta}R_\infty(\theta;\Theta)$.
\end{lemma}

This uniform convergence result plays a central role in our analysis. It allows us to infer worst-case properties of the finite-noise risk $R_\sigma$ from the corresponding properties of the limiting risk $R_\infty$. In particular, a strict separation between worst-case limiting risks implies the same ordering for all sufficiently large but finite noise levels.

\begin{theorem}\label{thm:finite-sigma-reversal}
Let $\Theta_S \subsetneq \Theta_L$ be nonempty, compact, and convex.
Suppose that
\[
\sup_{\theta\in\Theta_S} R_\infty(\theta; \Theta_S)
>
\sup_{\theta\in\Theta_L} R_\infty(\theta; \Theta_L).
\]
Then there exists $\sigma_1<\infty$ such that for all $\sigma\ge\sigma_1$,
\[
\sup_{\theta\in\Theta_S} R_\sigma(\theta;\Theta_S)
>
\sup_{\theta\in\Theta_L} R_\sigma(\theta;\Theta_L).
\]
\end{theorem}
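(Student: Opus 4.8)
The plan is to deduce Theorem~\ref{thm:finite-sigma-reversal} directly from the uniform convergence statement of Lemma~\ref{lem:uniform-large-sigma}, applied separately to each of the two constraint sets. Write
\[
M_S(\sigma) := \sup_{\theta\in\Theta_S} R_\sigma(\theta;\Theta_S),
\qquad
M_L(\sigma) := \sup_{\theta\in\Theta_L} R_\sigma(\theta;\Theta_L),
\]
and denote their limiting counterparts by $M_S(\infty)$ and $M_L(\infty)$, the worst-case limiting risks. By hypothesis $M_S(\infty) > M_L(\infty)$, so the gap
\[
2\eta := M_S(\infty) - M_L(\infty)
\]
is strictly positive. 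The entire argument is then an $\eta/2$-type separation: since both $M_S(\sigma)\to M_S(\infty)$ and $M_L(\sigma)\to M_L(\infty)$ as $\sigma\to\infty$ by Lemma~\ref{lem:uniform-large-sigma}, I can force each finite-noise quantity to be within $\eta$ of its limit for all sufficiently large $\sigma$, and the strict gap survives.

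Concretely, first I would invoke Lemma~\ref{lem:uniform-large-sigma} for $\Theta_S$ to obtain a threshold $\sigma_S$ such that $M_S(\sigma) > M_S(\infty) - \eta$ for all $\sigma \ge \sigma_S$; applying the lemma is legitimate because $\Theta_S$ is nonempty, compact, and convex. Then I would invoke it again for $\Theta_L$ to obtain $\sigma_L$ such that $M_L(\sigma) < M_L(\infty) + \eta$ for all $\sigma \ge \sigma_L$. Setting $\sigma_0 := \max(\sigma_S,\sigma_L)$, for every $\sigma \ge \sigma_0$ I can chain the two bounds:
\[
M_S(\sigma) > M_S(\infty) - \eta = M_L(\infty) + \eta > M_L(\sigma),
\]
where the middle equality uses $M_S(\infty) - M_L(\infty) = 2\eta$. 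This is exactly the desired strict worst-case reversal at finite noise, completing the proof.

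There is essentially no substantive obstacle once Lemma~\ref{lem:uniform-large-sigma} is in hand: the content of the theorem has been front-loaded into that uniform convergence result, and the present argument is merely a soft topological separation of two convergent real sequences. The one point meriting a word of care is that the uniform convergence must hold for the \emph{supremum} over the parameter set, not merely pointwise; this is precisely what Lemma~\ref{lem:uniform-large-sigma} provides, and it is what prevents the worst-case-achieving parameter from ``escaping'' as $\sigma$ varies. I would note explicitly that the two applications of the lemma use different constraint sets, so the thresholds $\sigma_S$ and $\sigma_L$ are a priori distinct, and that taking their maximum is what yields a single $\sigma_0$ valid for both bounds simultaneously. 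No compactness or geometric input beyond the hypotheses of Lemma~\ref{lem:uniform-large-sigma} is required.
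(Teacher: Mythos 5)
Your proof is correct and follows essentially the same route as the paper: both deduce the result from Lemma~\ref{lem:uniform-large-sigma} via an $\epsilon$-separation argument on the strictly positive gap between the two limiting worst-case risks. If anything, your version is slightly tidier, since it uses only the lemma's stated conclusion (convergence of the suprema), whereas the paper invokes the uniform-in-$\theta$ bound established inside the lemma's proof with a $\Delta/3$ split.
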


By Theorem~\ref{thm:finite-sigma-reversal}, to establish worst-case risk reversal for all sufficiently large noise levels, it suffices to identify nested constraint sets $\Theta_S\subsetneq \Theta_L$ whose limiting risks exhibit worst-case reversal. This task is substantially simplified by the geometric structure of convex polytopes.
In particular, Lemma~\ref{lem:polytopeMaxAtVertex} shows that the worst-case limiting risk over a polytope is attained at one of its vertices, reducing the comparison to finitely many explicit calculations. We now demonstrate this construction.

\begin{figure}
    \centering
\begin{tikzpicture}
    \begin{axis}[
        width=10cm, height=4.5cm,
        xlabel={$x$},
        grid=major,
        grid style={dashed, gray!30},
        legend pos=south east,
        legend style={nodes={scale=0.9, transform shape}},
        xmin=0, xmax=1.35,
        ymin=0.8, ymax=1.5,
        line width=1.5pt,
        % Define all functions locally here
        declare function={
            c = 0.75;
            invc = 1.0/c;
            vcsq = invc^2 + 1;
            % Trigonometry in PGF uses degrees, so we divide by 360 instead of 2pi
            p2 = 0.25 + atan(invc)/360;
            px(\x) = 0.25 - atan(\x)/360;
            p1(\x) = 1.0 - p2 - px(\x);
            vxsq(\x) = \x^2 + 1;
            diffsq(\x) = (invc - \x)^2;
            % Risk functions
            Rv1(\x) = p2 * vcsq + px(\x) * vxsq(\x);
            Rv2(\x) = p1(\x) * vcsq + px(\x) * diffsq(\x);
            Rvx(\x) = p1(\x) * vxsq(\x) + p2 * diffsq(\x);
        }
    ]

    % Domain: 0 to \frac{1}{c} 
    % R1
    \addplot[blue, solid, domain=0.0001:1.33] { Rv1(x) };
    \addlegendentry{$R_\infty(v_1; \Theta_x)$}

    % R2
    \addplot[green!60!black, dashed, domain=0.0001:1.33] { Rv2(x) };
    \addlegendentry{$R_\infty(v_2; \Theta_x)$}

    % Rx
    \addplot[purple, dotted, domain=0.0001:1.33] { Rvx(x) };
    \addlegendentry{$R_\infty(v_x; \Theta_x)$}

    % Rmax Envelope
    \addplot[
        red, 
        opacity=0.3, 
        line width=4pt, 
        domain=0.0001:1.33,
        samples=100
    ] { max(Rv1(x), max(Rv2(x), Rvx(x))) };
    \addlegendentry{$\overline{R}_\infty(\Theta_x)$}

    % Minimum Marker
    % Calculated minimum is at x = 0.4289, y = 1.324
    \draw[black, dashed, thin] (axis cs:0.429, 0) -- (axis cs:0.429, 1.324);
    
    % dot at the minimum
    \addplot[only marks, mark=*, mark options={fill=black, scale=0.8}] coordinates {(0.429, 1.324)};
    
    % Label
    \node[anchor=south] at (axis cs:0.429, 1.35) {\footnotesize $x = 0.4290$};
    \end{axis}
\end{tikzpicture}
\caption{
Vertex risks $R_\infty(v_1;\Theta_x)$, $R_\infty(v_2;\Theta_x)$, and $R_\infty(v_x;\Theta_x)$, together with their upper envelope $\overline{R}_\infty(\Theta_x)$, as functions of $x$ for $c=0.75$. The envelope has a minimum at $x=0.4290$. Increasing $x$ tightens the constraint. The non-monotonicity of the envelope implies worst-case risk reversal (see the remark following Lemma~\ref{lem:explicit-worst-case-risk}).
}
    \label{fig:sup-norm-risk}
\end{figure}

\begin{lemma}\label{lem:explicit-worst-case-risk}
Fix $c=0.75$, $a=0.5$, and $b=1.3$. Define
\[
v_1=(0,0), \qquad
v_2=(c^{-1},1), \qquad
v_a=(a,1), \qquad
v_b=(b,1).
\]
Set $\Theta_S:=\operatorname{conv}(v_1,v_2,v_b)$ and  $\Theta_L:=\operatorname{conv}(v_1,v_2,v_a)$. Then $\Theta_S \subsetneq \Theta_L$ and 
\[
\sup_{\theta\in\Theta_S} R_\infty(\theta;\Theta_S)
>
\sup_{\theta\in\Theta_L} R_\infty(\theta;\Theta_L).
\]
\end{lemma}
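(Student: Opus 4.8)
The plan is to verify the strict inequality by computing both worst-case limiting risks explicitly, using the polytope characterization of $R_\infty$ from Lemma~\ref{lem:polytopeMaxAtVertex}. Since both $\Theta_S$ and $\Theta_L$ are triangles with $\theta^\star$-independent geometry, and the worst-case risk over a polytope is attained at a vertex (by Lemma~\ref{lem:polytopeMaxAtVertex}, $R_\infty(\theta;\Theta)=\sum_i p_i\|v_i-\theta\|^2$ is a convex function of $\theta$, hence maximized at an extreme point), it suffices to evaluate $R_\infty(v;\Theta)$ at each of the three vertices $v$ of each triangle and take the maximum. Thus I would reduce the problem to a finite computation: six vertex-risk values in total.

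First I would record the selection probabilities. For a triangle $\operatorname{conv}\{v_1,v_2,v_w\}$ (with $v_w=(w,1)$ playing the role of the third vertex, $w\in\{a,b\}$), the normal cones at the three vertices partition $\mathcal S^1$, and their angular measures are computed from the edge directions. By the same arctangent bookkeeping already carried out in Lemma~\ref{lem:ThetaARisk}, the probability of selecting the apex $v_w$ is $p_w=\tfrac14-\tfrac{1}{2\pi}\arctan w$, the probability of selecting $v_2$ is $p_2=\tfrac14+\tfrac{1}{2\pi}\arctan\tfrac1c$, and the probability of selecting $v_1$ is the complement $p_1=1-p_2-p_w$. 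Then each vertex risk is
\[
R_\infty(v_j;\Theta_w)=\sum_{i}p_i\,\|v_i-v_j\|^2,
\]
where the pairwise squared distances are $\|v_2-v_1\|^2=1+\tfrac1{c^2}$, $\|v_w-v_1\|^2=1+w^2$, and $\|v_2-v_w\|^2=(\tfrac1c-w)^2$. Plugging in $c=0.75$, $a=0.5$, $b=1.3$ gives numerical values for all six vertex risks; the worst-case risk over each triangle is the largest of its three vertex values.

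The remaining step is the numerical comparison: I would confirm that $\max_j R_\infty(v_j;\Theta_S)$ (the $w=b=1.3$ triangle, which is the \emph{smaller} set since larger $w$ tightens the constraint toward the segment $\Theta_S$ of Example~\ref{ex:running-example}) strictly exceeds $\max_j R_\infty(v_j;\Theta_L)$ (the $w=a=0.5$ triangle). The inclusion $\Theta_S\subset\Theta_L$ itself follows because $v_b$ lies on the segment $\operatorname{conv}\{v_1,v_2\}$'s side of—indeed inside—the triangle with apex $v_a$: concretely, $v_b=(1.3,1)$ is a convex combination of $v_1,v_2,v_a$, which I would check directly from the barycentric coordinates. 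The envelope picture in Figure~\ref{fig:sup-norm-risk} indicates the upper envelope $\overline R_\infty(\Theta_x)$ is non-monotone in $x$ with an interior minimum near $x=0.429$, so moving from $x=a=0.5$ to $x=b=1.3$ increases the worst-case value, which is exactly the asserted reversal.

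The main obstacle is not conceptual but arithmetic: I must ensure the selected constants $(c,a,b)=(0.75,0.5,1.3)$ genuinely land on the increasing branch of the envelope and produce a \emph{strict} gap that survives rounding. This requires that the vertex attaining the maximum be correctly identified for each triangle (the envelope in Figure~\ref{fig:sup-norm-risk} switches which vertex is dominant as $x$ varies, so the active vertex for $\Theta_S$ and for $\Theta_L$ need not be the same), and then verifying the numerical inequality with enough precision. I would therefore present the six $\arctan$-based expressions in closed form, reduce each to a decimal, and exhibit the strict inequality between the two maxima, thereby establishing the claim.
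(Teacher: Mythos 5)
Your proposal is correct and follows essentially the same route as the paper's proof: reduce the worst-case risk to vertex evaluations via convexity of the quadratic $\theta \mapsto R_\infty(\theta;\Theta)$ (Lemma~\ref{lem:polytopeMaxAtVertex}), plug in the selection probabilities $p_2=\tfrac14+\tfrac{1}{2\pi}\arctan\tfrac1c$ and $p_w=\tfrac14-\tfrac{1}{2\pi}\arctan w$ from Lemma~\ref{lem:ThetaARisk} together with the pairwise squared distances, and verify the strict numerical gap between the two three-vertex maxima (the paper's values are $\max\{1.324659,\,1.306278,\,0.808860\}=1.324659$ for $\Theta_L$ and $\max\{1.385120,\,1.383614,\,1.340221\}=1.385120$ for $\Theta_S$, confirming that your chosen constants land on the increasing branch of the envelope with a gap that survives rounding). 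The only cosmetic difference is that you verify the inclusion $\Theta_S\subset\Theta_L$ explicitly via barycentric coordinates, whereas the paper simply observes that $a<b$ implies $\Theta_b\subset\Theta_a$.
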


\begin{remark}[Worst-case risk envelope]
Figure~\ref{fig:sup-norm-risk} provides geometric intuition for the construction in
Lemma~\ref{lem:explicit-worst-case-risk}.
For each $x\in(0,c^{-1})$, the diverging-noise risk $R_\infty(\theta;\Theta_x)$ is a convex quadratic
function of $\theta$, and hence its maximum over the compact convex set $\Theta_x$
is attained at an extreme point.
As a result, the worst-case risk over $\Theta_x$ reduces to the maximum of the risks
evaluated at the three vertices $v_1$, $v_2$, and $v_x$. The figure plots these three vertex risks, as well as their upper envelope
\[
\overline{R}_\infty(\Theta_x)
:=
\max\{
R_\infty(v_1;\Theta_x),
R_\infty(v_2;\Theta_x),
R_\infty(v_x;\Theta_x)
\},
\]
as functions of $x$ for $c=0.75$.
Increasing $x$ tightens the constraint, since $\Theta_{x_2}\subsetneq \Theta_{x_1}$
whenever $x_2>x_1$.
If tightening the constraint could only improve performance, the envelope
$\overline{R}_\infty(\Theta_x)$ would be non-increasing in $x$. Instead, the envelope initially decreases and then increases as $x$ grows. This behavior underlies the strict worst-case risk reversal established in
Lemma~\ref{lem:explicit-worst-case-risk}.
\end{remark}

\subsubsection{Risk reversal beyond Gaussian noise and squared-error risk}
\label{ssec:beyondGaussianSqLoss}

We have so far worked in the Gaussian sequence model and evaluated risk using
squared-error risk. In this section, we show that neither choice is essential.
We consider radially symmetric location models for which the constrained MLE is
still given by Euclidean projection, and we allow risk to be evaluated using
radial loss functions. Thus the estimator remains likelihood-based, while both the
noise distribution and the loss used to evaluate risk are allowed to vary.

We make this precise as follows. We call a function $q:[0,\infty)\to(0,\infty)$ admissible if it is strictly decreasing and 
\[
    0<C_q:=\int_{\R^d} q(\|w\|)\,dw<\infty .
\]
For such a $q$, let $W_q$ be an $\R^d$-valued random vector with density
\[
    f_q(w)
    =
    C_q^{-1}q(\|w\|),
    \qquad w\in\R^d,
\]
and suppose that
\begin{align}
\label{eq:radial-model}
    Y=\theta^\star+\sigma W_q.
\end{align}
We remark that this class contains many examples of heavy-tailed distributions. For instance, for any $\nu>0$, the choice $q_\nu(r) = (1+\frac{r^2}{\nu})^{-(\nu+d)/2}$ gives the spherically symmetric multivariate Student-$t$ density with $\nu$ degrees of freedom, whose tails decay polynomially rather than exponentially.

For fixed $y$, the likelihood is proportional to $\theta
    \mapsto
    q\left(\frac{\|y-\theta\|}{\sigma}\right)$. Since $q$ is strictly decreasing, maximizing the constrained likelihood is
equivalent to minimizing $\|y-\theta\|$ over $\Theta$. Hence the MLE and the
LSE coincide, and both are equal to the Euclidean projection of $Y$ onto
$\Theta$. We evaluate this estimator using the radial risk
\[
    R_{\sigma}^{(q,\eta)}(\theta^\star;\Theta)
    :=
    \E \frac{1}{\eta}
    \|
        \Pi_{\Theta}(Y)-\theta^\star
    \|^\eta,
    \qquad
    \eta>0.
\]

The case $q(r) = e^{-r^2/2}$ and $\eta=2$ recovers the Gaussian squared-error setting up to a factor of $\frac{1}{2}$. Since the law of $W_q$ depends only on $\|w\|$, it is invariant under
rotations. Consequently, the direction $W_q/\|W_q\|$ is uniformly distributed
on $\mcS^{d-1}$. This is precisely the distributional
input used in the diverging-noise projection limit
Theorem~\ref{thm:large-sigma-projection} and in the subsequent polytopal calculations, including Corollary~\ref{cor:DivergingNoiseThetaSLRisks}. Therefore, the exposed-face probabilities coincide with those in the Gaussian-noise case.

The following result applies this observation to show that
pointwise risk reversal persists in the running example under the
radial location model~\eqref{eq:radial-model}. 

\begin{theorem}
\label{thm:radial-beta-eta}
Consider the running example under the radial
location model~\eqref{eq:radial-model}, and evaluate risk using
$R_{\sigma}^{(q,\eta)}$. For each admissible $q$, the
constrained MLE and constrained LSE coincide, and both are equal to the
Euclidean projection onto the constraint set.

Moreover, for every $\eta>1$, there exists $c_\eta>0$ such that the
following holds. For every admissible $q$ and every
$0<c<c_\eta$, there exists $\sigma_1<\infty$ such that
\[
R_{\sigma}^{(q,\eta)}(\theta^\star;\Theta_S)
>
R_{\sigma}^{(q,\eta)}(\theta^\star;\Theta_L)
\]
for all $\sigma\ge\sigma_1$. If $\eta\ge2$, then the same conclusion holds
for every $0<c<1$.
\end{theorem}

Using a separate construction, the next result shows that the same phenomenon
persists at the level of worst-case risk. Thus, for every radial risk exponent
$\eta>1$, risk reversal is not confined to pointwise comparisons.

\begin{theorem}
\label{thm:radial-worst-case-all-eta}
Under the radial location model~\eqref{eq:radial-model}, fix an admissible
$q$ and $\eta>1$. Then there exist nonempty compact convex sets
$\Theta_S\subsetneq \Theta_L\subseteq\mathbb R^2$ and
$\sigma_1<\infty$ such that
\[
    \sup_{\theta\in\Theta_S}
    R_{\sigma}^{(q,\eta)}(\theta;\Theta_S)
    >
    \sup_{\theta\in\Theta_L}
    R_{\sigma}^{(q,\eta)}(\theta;\Theta_L)
\]
for all $\sigma\ge \sigma_1$.
\end{theorem}

\begin{remark}[Beyond projection-type estimators]
The radial models considered in this section preserve the identity between the
constrained MLE and the Euclidean projection. This keeps the estimator
statistically motivated while allowing us to relax Gaussianity and squared-error
risk without changing the projection geometry. This equivalence is not automatic. For example, if the coordinates of the noise are chosen to be independent Laplace random
variables, so that the density is proportional to $e^{-\|w\|_1/b}$, then the
constrained MLE minimizes the $\ell_1$-distance $\|Y-\theta\|_1$ over
$\theta\in\Theta$, rather than the Euclidean distance $\|Y-\theta\|_2$
minimized by the constrained LSE. 

A natural question is whether risk reversal persists in models for which the constrained MLE is not a Euclidean projection. Such estimators remain likelihood-based, and hence
statistically motivated, but their behavior is no longer governed by the
Euclidean projection geometry studied here. Extending the present theory to
that setting would require new tools for analyzing the geometry of the
constrained likelihood estimator itself.
\end{remark}

\section{Proofs}\label{sec:proofs}
We provide here the proofs and supporting results omitted from the previous
sections.

\subsection{Proofs of results in Section~\ref{sec:GSM-Counterexample}}

\begin{proof}[Proof of Theorem~\ref{thm:counter-example}]
The explicit expressions for the risk are provided in Theorem~\ref{thm:hatthetaLExactRisk} and Theorem~\ref{thm:hatthetaSExactRisk}. The vanishing-noise expansions in \eqref{eq:vanishing-noise-diff} follow by Corollary~\ref{cor:VanishingNoiseThetaSLRisks}. Since $c>0$ implies $\arctan (c^{-1}) \in(0,\frac\pi2)$, the leading constant is strictly negative, and therefore
$R_\sigma(\theta^\star; \Theta_S)-R_\sigma(\theta^\star; \Theta_L)<0$ for all sufficiently small $\sigma$.
The diverging-noise expansions \eqref{eq:diverging-noise-diff} follow by Corollary~\ref{cor:DivergingNoiseThetaSLRisks}. 

Write the leading term as
\[
g(c)
:=
\frac{1}{4c^2}-\frac{1+c^2}{2\pi c^2}\arctan \frac{1}{c}
=
\frac{1}{4c^2}\left(1-\frac{2}{\pi}(1+c^2)\arctan \frac{1}{c}\right).
\]
The sign of $g(c)$ is determined by the comparison of $h(c):=(1+c^2)\arctan(c^{-1})$ with $\frac{\pi}{2}$. Specifically, $g(c)>0$ if and only if
$h(c)<\frac{\pi}{2}$, while $g(c)<0$ if and only if $h(c)>\frac{\pi}{2}$. We now show that
\[
h(c)
\begin{cases}
<\frac{\pi}{2}, & 0<c<1,\\[2pt]
=\frac{\pi}{2}, & c=1,\\[2pt]
>\frac{\pi}{2}, & c>1.
\end{cases}
\]
Differentiating gives $h'(c)=-1+2c\arctan (c^{-1})$, and $ h''(c)
    = 2(\arctan (c^{-1})-\frac{c}{1+c^2})$.

The second derivative is strictly positive for all $c>0$, since
\[
    \arctan \frac{1}{c} 
    =
    \int_0^{\frac{1}{c}} \frac{1}{1+t^2}\,dt
    >
    \frac{1}{c}\cdot \frac{1}{1+\frac{1}{c}^2}
    =
    \frac{c}{1+c^2},
\]
where the strict inequality follows because $t\mapsto (1+t^2)^{-1}$ is strictly decreasing on $(0,\infty)$. 
Thus $h$ is strictly convex on $(0,\infty)$. 

We first consider $0<c<1$. Since $\lim_{c\to 0^+} h(c)=\frac{\pi}{2}$ and $h(1)=\frac{\pi}{2}$, strict convexity implies that $h(c)<\frac{\pi}{2}$ for every $0<c<1$.

Next, we consider the case $c>1$. Note that $h'(1)>0$. Since $h$ is strictly convex, $h'$ is strictly increasing. Hence
$h'(c)>0$ for all $c>1$, so $h$ is strictly increasing on
$(1,\infty)$. Therefore $h(c)>h(1)=\frac{\pi}{2}$ for every $c>1$. Consequently,
\[
g(c)
\begin{cases}
>0, & 0<c<1,\\
=0, & c=1,\\
<0, & c>1.
\end{cases}
\]
The stated ordering for sufficiently large $\sigma$ follows from
\eqref{eq:diverging-noise-diff}.
\end{proof}

\subsection{Proofs of results in Section~\ref{sec:vanishing-noise-asymptotic}}
We first present a result summarizing the vanishing-noise risk behavior of the LSE. This recovers the tangent-cone asymptotics of
\cite[Theorem~3.1]{oymak2016sharp} and, in the interior case, strengthens the remainder to be smaller than any polynomial order in $\sigma$. Our proof uses a different approach however, based on differentiability properties of metric projections; see \cite{zarantonello1971projections}, which may be of independent
interest. The proof is deferred to Appendix~\ref{sec:vanishingnoise} of the supplementary material.

\begin{proposition}\label{prop:smallSigmaGeneral}
Let $\Theta \subseteq \R^d$ be a compact and convex set with nonempty interior. Fix $\theta^\star\in\Theta$ and for $\sigma>0$, define
\[
\hattheta_\sigma := \Pi_\Theta(\theta^\star+\sigma Z),
\qquad Z\sim N(0,I_d).
\]
\begin{enumerate}
\item[\textup{(i)}] \textbf{Boundary case.}
If $\theta^\star\in\partial\Theta$, then as $\sigma\to 0^+$,
\begin{equation}\label{eq:scaled-boundary-L2-v2}
\frac{\hattheta_\sigma-\theta^\star}{\sigma}
\stackrel{L^2}{\longrightarrow} \Pi_{T_\Theta(\theta^\star)}(Z),
\end{equation}
and consequently
\begin{equation}\label{eq:risk-boundary-v2}
\E\|\hattheta_\sigma-\theta^\star\|^2
=
\sigma^2\delta (T_{\Theta}(\theta^\star))
+
o(\sigma^2).
\end{equation}

\item[\textup{(ii)}] \textbf{Interior case.}
If $\theta^\star\in\operatorname{int}(\Theta)$, then as $\sigma\to 0^+$,
\begin{equation}\label{eq:scaled-interior-L2-v2}
\frac{\hattheta_\sigma-\theta^\star}{\sigma}
\stackrel{L^2}{\longrightarrow}  Z.
\end{equation}
Moreover, for every integer $m\ge 1$,
\begin{equation}\label{eq:risk-interior-superpoly-v2}
\E\|\hattheta_\sigma-\theta^\star\|^2
=
\sigma^2 d
+
o(\sigma^m).
\end{equation}
\end{enumerate}
\end{proposition}

\begin{proof}[Proof of Theorem~\ref{thm:NoRRinSmallNoise}]
Since $\Theta_S\subseteq\Theta_L$ and $\theta^\star\in\Theta_S$, we have $T_{\Theta_S}(\theta^\star)\subseteq T_{\Theta_L}(\theta^\star)$. By \cite[Proposition~3.1]{amelunxen2014living}, the statistical dimension is
monotone with respect to inclusion of closed convex cones. Hence $\delta(T_{\Theta_S}(\theta^\star)) \le \delta(T_{\Theta_L}(\theta^\star))$. Applying the asymptotic risk expansion of
\cite[Theorem~3.1]{oymak2016sharp} gives the claim.
\end{proof}

\begin{proof}[Proof of Corollary~\ref{cor:VanishingNoiseThetaSLRisks}]

We proceed by computing the statistical dimension of the tangent cone of
$\Theta_S$ and $\Theta_L$ at $\theta^\star$. The result then follows by
applying \cite[Theorem 3.1]{oymak2016sharp}.

Since $\theta^\star=v_1=0$ and $\Theta_S=\{\alpha v_2:0\le \alpha\le1\}$, we have 
\[ 
 T_{\Theta_S}(\theta^\star) = 
 \overline{\bigcup_{t\ge0}t(\Theta_S-\theta^\star)}
 = \{\alpha v_2:\alpha\ge0\}. \] 
Similarly, since $\Theta_L = \operatorname{conv}(v_1,v_2,v_3)$, every point of $\Theta_L-\theta^\star$ can be written as 
\[ 
\alpha v_2+\beta v_3, \qquad \alpha,\beta\ge0,\qquad \alpha+\beta\le1. 
\] 
Taking all nonnegative rescalings gives 
\[ 
T_{\Theta_L}(\theta^\star)
= \overline{\bigcup_{t\ge0}t(\Theta_L-\theta^\star)}=\{\alpha v_2+\beta v_3:\alpha,\beta\ge 0\}. 
\]
Importantly, $T_{\Theta_S}(\theta^\star)$ and
$T_{\Theta_L}(\theta^\star)$ are polyhedral cones, meaning that they are closed convex
cones generated by finitely many rays. In particular,
$T_{\Theta_S}(\theta^\star)$ is generated by the single ray through $v_2$,
while $T_{\Theta_L}(\theta^\star)$ is generated by the two rays through
$v_2$ and $v_3$.

We recall the following definitions \cite[Section 5.1]{amelunxen2014living}.
Let $C \subseteq \R^2$ be a polyhedral cone. Its (conic) intrinsic volumes are the
numbers $\Gamma_k(C)\in[0,1]$, $k=0,1,2$, defined by 
\begin{align*}
\Gamma_k(C)
:=
\mathbb{P}(\Pi_C(Z)\ \text{lies in the relative interior of a
$k$-dimensional face of $C$}),
\end{align*}
for $Z\sim N(0,I_2)$.

Here, a $k$-dimensional face of $C$ means that $F$ is a $k$-dimensional convex subset of $C$ with the property that if $x,y\in C$, $\lambda\in(0,1)$,
and $\lambda x+(1-\lambda)y\in F$, then $x,y\in F$.

The quantities $\Gamma_k$ satisfy $\sum_{k=0}^2 \Gamma_k(C)=1$, and the statistical dimension admits the
representation $\delta(C)=\sum_{k=0}^2 k \Gamma_k(C)$. We apply this result to
$T_{\Theta_S}(\theta^\star)$ and $T_{\Theta_L}(\theta^\star)$.
\begin{itemize}
    \item $T_{\Theta_S}(\theta^\star)$. This cone has exactly two faces: the vertex $v_1$ (dimension $0$) and the ray itself (dimension $1$). Hence $\Gamma_2 (T_{\Theta_S}(\theta^\star))=0$. Moreover, by symmetry of the standard Gaussian, the scalar projection of $Z$ onto the direction $v_2$ is nonnegative with probability $\frac12$, and negative with probability $\frac12$. When it is negative, the closest point in $T_{\Theta_S}(\theta^\star)$ is the origin. Therefore
    \begin{align*}
    \Gamma_0(T_{\Theta_S}(\theta^\star))&=\P(\Pi_{T_{\Theta_S}(\theta^\star)}(Z)=0)=\frac12,\\ \Gamma_1(T_{\Theta_S}(\theta^\star))&=1-\Gamma_0(T_{\Theta_S}(\theta^\star))-\Gamma_2 (T_{\Theta_S}(\theta^\star))=\frac12.
    \end{align*}
    Therefore 
    \[
    \delta(T_{\Theta_S}(\theta^\star))=0\cdot \Gamma_0(T_{\Theta_S}(\theta^\star))+1\cdot \Gamma_1(T_{\Theta_S}(\theta^\star))+2\cdot \Gamma_2 (T_{\Theta_S}(\theta^\star))=\frac12.
    \]

\item $T_{\Theta_L}(\theta^\star)$. 

Let $\vartheta = \arctan (c^{-1})$ denote the angle between the rays spanned
by $v_2$ and $v_3$. Since $T_{\Theta_L}(\theta^\star)$ is a
two-dimensional cone, it has a $2$-dimensional face, namely the cone itself.
Moreover, when $Z\in \operatorname{int}(T_{\Theta_L}(\theta^\star))$, the
projection is unchanged, $\Pi_{T_{\Theta_L}(\theta^\star)}(Z)=Z$.

Thus the projection lies in the relative interior of the $2$-dimensional face
precisely when $Z\in \operatorname{int}(T_{\Theta_L}(\theta^\star))$. Since
$Z\sim N(0,I_2)$ is rotationally invariant, the direction
$Z/\|Z\|$ is uniform on $\mcS^1$. Moreover,
$\mathbb P(Z\in \partial T_{\Theta_L}(\theta^\star))=0$. Hence this
probability is the normalized arc length of
$T_{\Theta_L}(\theta^\star)\cap\mcS^1$. Since this arc has length
$\vartheta$, we obtain
\[
\Gamma_2 (T_{\Theta_L}(\theta^\star))
=
\mathbb{P}(Z\in T_{\Theta_L}(\theta^\star))
=
\frac{\vartheta}{2\pi}.
\]

Next, recall that the polar cone of a cone $C\subseteq \R^2$ is
\[
    C^\circ
    :=
    \{z\in\R^2:\langle z,x\rangle\le0
    \text{ for all }x\in C\}.
\]
For a closed convex cone $C$, the projection $\Pi_C(Z)$ equals $0$ if
and only if $Z\in C^\circ$. Thus the projection lies in the relative interior
of the $0$-dimensional face $\{0\}$ precisely when
$Z\in T_{\Theta_L}(\theta^\star)^\circ$. 
  
In $\R^2$, the polar of a cone with opening angle $\vartheta\in(0,\pi)$ consists of the directions making
obtuse angles with every vector in the original cone, and therefore occupies
the complementary angular sector of size $\pi-\vartheta$. Hence, the polar cone $T_{\Theta_L}(\theta^\star)^\circ$ has opening angle
$\pi-\vartheta$ and so
\[
\Gamma_0(T_{\Theta_L}(\theta^\star))
=
\mathbb{P}(\Pi_{T_{\Theta_L}(\theta^\star)}(Z)=0)
=
\mathbb{P}(Z\in T_{\Theta_L}(\theta^\star)^\circ)
=
\frac{\pi-\vartheta}{2\pi}.
\]
Thus
\[
\Gamma_1(T_{\Theta_L}(\theta^\star))
=
1-\Gamma_0(T_{\Theta_L}(\theta^\star))
-\Gamma_2 (T_{\Theta_L}(\theta^\star))
=
\frac12.
\]
It follows that 
\[
\delta(T_{\Theta_L}(\theta^\star))
=
\Gamma_1(T_{\Theta_L}(\theta^\star))
+
2  \Gamma_2 (T_{\Theta_L}(\theta^\star))
=
\frac12+\frac{\vartheta}{\pi}
=
\frac12+\frac1\pi \arctan \frac1c .
\]
\end{itemize}
Importantly, $\delta(T_{\Theta_S}(\theta^\star))=\frac12$ and $\delta(T_{\Theta_L}(\theta^\star))\in(\frac12,1)$, and so $\delta(T_{\Theta_S}(\theta^\star)) - \delta(T_{\Theta_L}(\theta^\star)) <0$ for any $c > 0$.
\end{proof}

\subsection{Proofs of results in Section~\ref{sec:diverging-noise-asymptotic}}

\begin{proof}[Proof of Theorem~\ref{thm:large-sigma-projection}]
Define, for $\sigma>0$ and $\theta \in \Theta$,
\[
F_\sigma(\theta)
:=
\langle \theta,U\rangle-\frac{1}{2\sigma\|Z\|}\|\theta-\theta^\star\|^2,
\qquad
F_\infty(\theta):=\langle \theta,U\rangle,
\]
on the event $\{Z\neq 0\}$, which has probability one. We first show that $\hattheta_\sigma$ is the unique maximizer of $F_\sigma$ over $\Theta$.
By definition,
\[
\hattheta_\sigma
=
\argmin_{\theta\in\Theta}\|\theta-(\theta^\star+\sigma Z)\|^2.
\]
Expanding the square and discarding terms independent of $\theta$ yields
\[
\hattheta_\sigma
=
\argmin_{\theta\in\Theta}\{\|\theta-\theta^\star\|^2-2\sigma\langle \theta,Z\rangle\}.
\]
Multiplying by $-\frac{1}{2\sigma\|Z\|}$ gives
\[
\hattheta_\sigma
=
\argmax_{\theta\in\Theta}
\{\langle \theta,U\rangle-\frac{1}{2\sigma\|Z\|}\|\theta-\theta^\star\|^2\}
=
\argmax_{\theta\in\Theta}F_\sigma(\theta).
\]
Since $F_\sigma$ is strictly concave on $\R^d$, the maximizer over the convex set $\Theta$
is unique.

Next, since $\Theta$ is bounded, $F_\sigma\to F_\infty$ uniformly on $\Theta$ almost surely as $\sigma \to \infty$.
Indeed, letting $D:=\sup_{\theta\in\Theta}\|\theta-\theta^\star\|<\infty$, we have
\[
\sup_{\theta\in\Theta}|F_\sigma(\theta)-F_\infty(\theta)|
=
\sup_{\theta\in\Theta}\frac{1}{2\sigma\|Z\|}\|\theta-\theta^\star\|^2
\le \frac{D^2}{2\sigma\|Z\|}\to 0
\quad\text{a.s.}
\]

 We now show that if $\sigma_n\to\infty$ and $\hattheta_{\sigma_n}\to\bar\theta$, then $\bar\theta\in F_\Theta(U)$.
Fix $\varepsilon>0$ and choose $n_0$ such that
$\sup_{\theta\in\Theta}|F_{\sigma_n}(\theta)-F_\infty(\theta)|\le \varepsilon$
for all $n\ge n_0$.
Then, for $n\ge n_0$,
\[
F_\infty(\hattheta_{\sigma_n})
\ge
F_{\sigma_n}(\hattheta_{\sigma_n})-\varepsilon
=
\sup_{\theta\in\Theta}F_{\sigma_n}(\theta)-\varepsilon
\ge
\sup_{\theta\in\Theta}F_\infty(\theta)-2\varepsilon.
\]
Letting $n\to\infty$ and using continuity of $F_\infty(\theta)=\langle\theta,U\rangle$ yields
\[
F_\infty(\bar\theta)\ge \sup_{\theta\in\Theta}F_\infty(\theta)-2\varepsilon.
\]
Since $\varepsilon$ is arbitrary, $\bar\theta\in F_\Theta(U)$. We now identify the limit within $F_\Theta(U)$.
Let $\theta^\dagger:=\Pi_{F_\Theta(U)}(\theta^\star)$, which is well-defined and unique since
$F_\Theta(U)$ is a nonempty closed convex subset of $\Theta$.
For each $\sigma>0$, optimality of $\hattheta_\sigma$ gives $F_\sigma(\hattheta_\sigma)\ge F_\sigma(\theta^\dagger)$. Expanding $F_\sigma$ yields
\[
\langle \hattheta_\sigma,U\rangle
-\frac{1}{2\sigma\|Z\|}\|\hattheta_\sigma-\theta^\star\|^2
\ge
\langle \theta^\dagger,U\rangle
-\frac{1}{2\sigma\|Z\|}\|\theta^\dagger-\theta^\star\|^2.
\]
Rearranging,
\[
\|\hattheta_\sigma-\theta^\star\|^2
\le
\|\theta^\dagger-\theta^\star\|^2
-
2\sigma\|Z\|(
\langle \theta^\dagger,U\rangle-\langle \hattheta_\sigma,U\rangle).
\]
Since $\theta^\dagger\in F_\Theta(U)$ maximizes $\langle\theta,U\rangle$ over $\Theta$,
the term in parentheses is non-negative, and hence $\|\hattheta_\sigma-\theta^\star\|^2
\le
\|\theta^\dagger-\theta^\star\|^2$ for all $\sigma>0$.

Passing to the limit along the sequence $\sigma_n$ above and using continuity of the norm gives $\|\bar\theta-\theta^\star\|^2
\le
\|\theta^\dagger-\theta^\star\|^2$. Since $\bar\theta\in F_\Theta(U)$ and $\theta^\dagger$ is the unique minimizer of
$\|\theta-\theta^\star\|^2$ over $F_\Theta(U)$, it follows that
$\bar\theta=\theta^\dagger$.
 It remains to upgrade subsequential convergence to convergence. Let $\sigma_n\to\infty$ be arbitrary. Suppose, for contradiction, that $\hattheta_{\sigma_n}$ does not converge to $\theta^\dagger$. Then there exist $\varepsilon>0$ and a subsequence, still denoted $\sigma_n$, such that $\|\hattheta_{\sigma_n}-\theta^\dagger\|\ge \varepsilon$ for all $n$. Since $\Theta$ is compact and $\hattheta_{\sigma_n}\in\Theta$, this subsequence has a further subsequence converging to some $\bar\theta\in\Theta$. By the argument above, every such subsequential limit must equal $\theta^\dagger$, a contradiction. Hence $\hattheta_{\sigma_n}\to\theta^\dagger$ for every sequence $\sigma_n\to\infty$, and therefore $ \hattheta_\sigma \to \Pi_{F_\Theta(U)}(\theta^\star)$ almost surely as $\sigma\to\infty$. 

Finally, since $\hattheta_\sigma\in\Theta$ for all $\sigma$ and $\Theta$ is bounded,
$\|\hattheta_\sigma-\theta^\star\|^2\le D^2$ almost surely.
By dominated convergence,
\[
\lim_{\sigma\to\infty}\E\|\hattheta_\sigma-\theta^\star\|^2
=
\E\|\Pi_{F_\Theta(U)}(\theta^\star)-\theta^\star\|^2,
\]
which completes the proof.
\end{proof}

\begin{proof}[Proof of Corollary~\ref{cor:balls}]
For $r>0$, write $\Theta_r:=B(x_0,r)$. For any $y\in\Theta_r$ and $u\in\mcS^{d-1}$, by Cauchy-Schwarz 
\[
    \langle y,u\rangle
    =
    \langle x_0,u\rangle+\langle y-x_0,u\rangle
    \le
    \langle x_0,u\rangle+\|y-x_0\|\|u\|
    \le
    \langle x_0,u\rangle+r.
\]
Equality holds if and only if $y-x_0=ru$. Hence the exposed face of
$\Theta_r$ in direction $u$ is the singleton $F_{\Theta_r}(u)=\{x_0+ru\}$. Thus, by the diverging-noise formula in Theorem~\ref{thm:large-sigma-projection}, if
$U\sim\operatorname{Unif}(\mcS^{d-1})$, then
\[
    R_\infty(\theta^\star;\Theta_r)
    =
    \E\|x_0+rU-\theta^\star\|^2  
    =
    \|x_0-\theta^\star\|^2
    +2r\langle x_0-\theta^\star,\E U\rangle
    +r^2\E\|U\|^2.
\]
Since $\E U=0$ and $\|U\|=1$, this gives $R_\infty(\theta^\star;\Theta_r)
    =
    \|x_0-\theta^\star\|^2+r^2$. Therefore
\[
    R_\infty(\theta^\star;\Theta_S)
    =
    \|x_0-\theta^\star\|^2+r_S^2
    <
    \|x_0-\theta^\star\|^2+r_L^2
    =
    R_\infty(\theta^\star;\Theta_L).
\]
\end{proof}

\begin{proof}[Proof of Corollary~\ref{cor:ellipse}]
Let $u=(\cos\varphi,\sin\varphi)\in\mcS^1$. Since $\mcE(a,b)$ is strictly convex, the maximizer of $x\mapsto \langle u,x\rangle$ over $\mcE(a,b)$ is unique, hence $F_{\mcE(a,b)}(u)$ is a singleton.

A Lagrange multiplier calculation gives
\[
    F_{\mcE(a,b)}(u)
    =
    \left\{
    \frac{
        (a^2\cos\varphi,b^2\sin\varphi)
    }{
        \sqrt{a^2\cos^2\varphi+b^2\sin^2\varphi}
    }
    \right\}.
\]
Therefore, writing $y_{a,b}(u)$ for this exposed point,
\[
    \|y_{a,b}(u)\|^2
    =
    \frac{
        a^4\cos^2\varphi+b^4\sin^2\varphi
    }{
        a^2\cos^2\varphi+b^2\sin^2\varphi
    }
    =
    a^2+b^2
    -
    \frac{a^2b^2}{
        a^2\cos^2\varphi+b^2\sin^2\varphi
    }.
\]
Using
\[
    \int_0^{2\pi}
    \frac{d\varphi}{
        a^2\cos^2\varphi+b^2\sin^2\varphi
    }
    =
    \frac{2\pi}{ab},
\]
the diverging-noise formula from Theorem~\ref{thm:large-sigma-projection} gives
\[
    R_\infty(0;\mcE(a,b))
    =
    \frac1{2\pi}
    \int_0^{2\pi}
        \|y_{a,b}(\cos\varphi,\sin\varphi)\|^2\,d\varphi =
    a^2+b^2-ab.
\]
For fixed $a$, the map $b\mapsto a^2+b^2-ab$ has derivative $2b-a$, and is therefore strictly decreasing on
$0<b<\frac{a}{2}$. Hence, if $0<b_S<b_L<\frac{a}{2}$, $R_\infty(0;\mcE(a,b_S))
    >
    R_\infty(0;\mcE(a,b_L))$.
The nesting $\mcE(a,b_S)\subsetneq \mcE(a,b_L)$ is immediate from $0<b_S<b_L$.
\end{proof}

\subsubsection{Proofs of results in Section~\ref{sec:cvxPolytopes}}
\begin{proof}[Proof of Lemma~\ref{lem:polytopeMaxAtVertex}]
Let $\Theta=\operatorname{conv}(v_1,\ldots,v_K) \subseteq \R^d$, and let
$U\sim \operatorname{Unif}(\mathcal S^{d-1})$.
Since a linear functional attains its maximum over a polytope at a vertex,
\[
\max_{\theta\in\Theta}\langle \theta,U\rangle
=
\max_{1\le i\le K}\langle v_i,U\rangle.
\]
If $d=1$, then $\mcS^{0}=\{-1,1\}$ and the maximizer is trivially unique
unless two vertices coincide, in which case the conclusion of the lemma still
holds. We therefore assume $d\ge2$ for the remainder of the proof.
A tie between two distinct vertices $v_i$ and $v_j$ occurs at a direction
$u\in\mathcal S^{d-1}$ if and only if $\langle v_i-v_j,u\rangle=0$.

Define the set of directions for which such a tie occurs by
\[
B
:=
\bigcup_{1\le i<j\le K}
\{ u\in\mathcal S^{d-1}:\ \langle v_i-v_j,u\rangle=0 \}.
\]
For any nonzero vector $a\in\R^d$, the set $\{u\in\mathcal S^{d-1}:\langle a,u\rangle=0\}$ is a smooth $(d-2)$-dimensional sub-manifold of $\mathcal S^{d-1}$ and therefore
has $\mu_{d-1}$-measure zero. Since $B$ is a finite union of such sets, it follows that $\mu_{d-1}(B)=0$. As the uniform distribution on $\mathcal S^{d-1}$ is absolutely continuous with
respect to $\mu_{d-1}$, we conclude that $\P(U\in B)=0$.

Consequently, with probability one, the maximizer of
$\langle \theta,U\rangle$ over $\Theta$ is unique and equals some vertex $v_I$.
Moreover, this vertex is characterized by the condition $U \in \mcN_\Theta(v_I)$. 
By Theorem~\ref{thm:large-sigma-projection}, the limiting estimator is $v_I$.
Therefore,
\[
    R_\infty(\theta;\Theta)
    =
    \E\|v_I-\theta\|^2
    =
    \sum_{i=1}^K p_i\|v_i-\theta\|^2,
\]
with $p_i$ as defined in the statement.
\end{proof}

\begin{proof}[Proof of Lemma~\ref{lem:ThetaARisk}]
 We first prove the result for $x\in[0,c^{-1})$, where the vertices $v_1,v_2,v_x$ are distinct. At the endpoint $x=c^{-1}$, one has
$v_x=v_2$, so $\Theta_x=\operatorname{conv}(v_1,v_2)$, and the displayed formula follows by direct substitution.
By Lemma~\ref{lem:polytopeMaxAtVertex} 
\[
R_\infty(\theta^\star; \Theta_x)
=
\sum_{j} p_j \|v_j-\theta^\star\|^2
=
\sum_{j} p_j \|v_j\|^2
=
\alpha_c p_2 + (1+x^2)p_x,
\]
where $\alpha_c := 1+\frac{1}{c^2}$ and $p_j := \P (F_{\Theta_x}(U)=\{v_j\})$ for $j\in \{1,2,x\}$. For any $U \sim \operatorname{Unif}(\mcS^1)$, 
\[
\langle v_1,U\rangle = 0,
\qquad
\langle v_2,U\rangle = \frac{U_1}{c}+U_2,
\qquad
\langle v_x,U\rangle = x U_1 + U_2.
\]
We now compute the vertex probabilities. Let $R:=\frac{U_2}{U_1}$, which is well defined almost surely.
\begin{itemize}
\item $p_2$. The event 
$\{ F_{\Theta_x}(U) = \{v_2\} \}$ is equivalent to the event $\{ \inp{v_2,U} \ge \inp{v_1, U}, \inp{v_2,U} \ge \inp{v_x, U}\}$, which is equivalent to the event $\{\frac{U_1}{c} + U_2 \ge0, U_1 \ge 0 \}$. 
By 
Lemma~\ref{lem:ratioUnifSphere} and Lemma~\ref{lem:ratioUnifSphereConditionalOnSignU1}, the random variables $R$ and $R \mid\{U_1\ge0\}$ are both standard Cauchy. It follows that 
\begin{align*}
    p_2 &= \P(U_1 \ge 0, R\ge -c^{-1}) 
    = \P(U_1\ge0)\,\P(R \ge -c^{-1} \mid U_1\ge0) \\
    &= \frac12 \P(R \ge -c^{-1}) 
    =\frac14 + \frac{1}{2\pi} \arctan \frac1c .
\end{align*}

\item $p_x$. The event 
$\{ F_{\Theta_x}(U) = \{v_x\} \}$ is equivalent to the event $\{x U_1 + U_2 \ge0, U_1 \le 0 \}$. 
On the event $\{U_1\le 0\}$, we have
\begin{align*}
    x U_1 + U_2 \ge0 \iff U_1(x+R) \ge 0 \iff R \le -x.
\end{align*}
Therefore, by Lemma~\ref{lem:ratioUnifSphereConditionalOnSignU1},

\begin{align*}
    p_x 
    &= \P(U_1 \le 0, R\le -x) 
    = \P(U_1\le0)\,\P(R \le -x \mid U_1\le0) \\
    &= \frac12 \P(R \le -x)
    =\frac14 -\frac{1}{2\pi} \arctan x.
\end{align*}
\end{itemize}
\end{proof}

\begin{proof}[Proof of Corollary~\ref{cor:DivergingNoiseThetaSLRisks}]    
    Taking $x=0$ in Lemma~\ref{lem:ThetaARisk} gives $\Theta_x=\Theta_L$, while taking $x=c^{-1}$ gives $\Theta_x=\Theta_S$. Thus Lemma~\ref{lem:ThetaARisk} gives the displayed formulas for $R_\infty(\theta^\star;\Theta_L)$ and $R_\infty(\theta^\star;\Theta_S)$. By Theorem~\ref{thm:large-sigma-projection}, $R_\sigma(\theta^\star;\Theta)
    =
    R_\infty(\theta^\star;\Theta)+o(1)$ for $\Theta \in \{ \Theta_S, \Theta_L\}$. Substituting these limiting values gives the claimed expansions.
\end{proof}

\subsubsection{Proofs of results in Section~\ref{sec:sup-risk}}

\begin{proof}[Proof of Lemma~\ref{lem:uniform-large-sigma}]
By Theorem~\ref{thm:large-sigma-projection}, for each $\theta\in\Theta$, $R_\sigma(\theta; \Theta)\to R_\infty(\theta; \Theta)$. We upgrade this pointwise convergence to uniform convergence on $\Theta$. To this end, we first show that the family $\{R_\sigma(\cdot; \Theta):\sigma>0\}$ is uniformly bounded and equicontinuous on $\Theta$.
Throughout, write $D_\Theta:=\operatorname{diam}(\Theta)<\infty$. For any $\theta\in\Theta$ and $\sigma>0$, both $\hat\theta_\sigma$ and
$\theta$ belong to $\Theta$, and therefore $\|\hat\theta_\sigma-\theta\|\le D_\Theta$, and so $R_\sigma(\theta; \Theta)\le D_\Theta^2$, from which it follows that the class is uniformly bounded. Next, fix $\sigma>0$ and define $g_\sigma(\theta):=\|\Pi_\Theta(\theta+\sigma Z)-\theta\|^2$. For $\theta,\theta'\in\Theta$, set $a:=\Pi_\Theta(\theta+\sigma Z)-\theta$ and $
b:=\Pi_\Theta(\theta'+\sigma Z)-\theta'$. Then
\[
|g_\sigma(\theta)-g_\sigma(\theta')|
=|\|a\|^2-\|b\|^2|
\le(\|a\|+\|b\|)\|a-b\|.
\]
Since $a,b$ are differences of points in $\Theta$, we have
$\|a\|,\|b\|\le D_\Theta$. Since $\Theta$ is closed and convex, $\Pi_\Theta$ is 1-Lipschitz, and so
\[
\|a-b\|
\le
\|\Pi_\Theta(\theta+\sigma Z)-\Pi_\Theta(\theta'+\sigma Z)\|
+\|\theta-\theta'\|
\le 2\|\theta-\theta'\|.
\]
Hence, almost surely, $|g_\sigma(\theta)-g_\sigma(\theta')|
\le 4D_\Theta\|\theta-\theta'\|$. Taking expectations yields
\[
|R_\sigma(\theta; \Theta)-R_\sigma(\theta';\Theta)|
\le 4D_\Theta\|\theta-\theta'\|,
\qquad
\text{for all }\sigma>0.
\]
Recall that a set of functions with common Lipschitz constant is uniformly equicontinuous. An identical argument applies with $\Pi_\Theta$ replaced by $\Pi_{F_\Theta(U)}$. Indeed, for each realization of $U$, $F_\Theta(U)$ is a nonempty closed convex subset of $\Theta$,
so $\Pi_{F_\Theta(U)}$ is well defined and $d(\theta,F_\Theta(U))\le D_\Theta$ for all $\theta\in\Theta$. Taking expectations shows that $R_\infty(\cdot;\Theta)$ is also $4D_\Theta$-Lipschitz on $\Theta$, and hence continuous.

Let $(\sigma_n)$ be any sequence with $\sigma_n\to\infty$. We claim that
\[
\sup_{\theta\in\Theta}
|R_{\sigma_n}(\theta;\Theta)-R_\infty(\theta;\Theta)|
\to 0.
\]
Suppose this were not true. Then there exist $\varepsilon>0$ and a subsequence,
still denoted $(\sigma_n)$, such that
\[
\sup_{\theta\in\Theta}
|R_{\sigma_n}(\theta;\Theta)-R_\infty(\theta;\Theta)|
\ge \varepsilon
\qquad\text{for all }n.
\]
The family $\{R_{\sigma_n}(\cdot;\Theta)\}$ is uniformly bounded and
equicontinuous on the compact set $\Theta$. Hence, by the
Arzel\`a-Ascoli Theorem \cite[Theorem~2.4.7]{Dudley_2002}, there is a
further subsequence $(\sigma_{n_k})$ and a continuous function
$f:\Theta\to\mathbb R$ such that
\[
\sup_{\theta\in\Theta}
|R_{\sigma_{n_k}}(\theta;\Theta)-f(\theta)|
\to 0.
\]
In particular, $R_{\sigma_{n_k}}(\theta;\Theta)\to f(\theta)$ for every
$\theta\in\Theta$. On the other hand, Theorem~\ref{thm:large-sigma-projection}
gives
\[
R_{\sigma_{n_k}}(\theta;\Theta)\to R_\infty(\theta;\Theta)
\qquad\text{for every }\theta\in\Theta.
\]
Thus $f=R_\infty(\cdot;\Theta)$ pointwise on $\Theta$, and therefore
\[
\sup_{\theta\in\Theta}
|R_{\sigma_{n_k}}(\theta;\Theta)-R_\infty(\theta;\Theta)|
\to 0,
\]
contradicting the choice of the subsequence. This proves the claim.

Since this holds for every sequence $\sigma_n\to\infty$, we conclude that
\[
\sup_{\theta\in\Theta}
|R_{\sigma}(\theta;\Theta)-R_\infty(\theta; \Theta)|
\to 0
\qquad\text{as }\sigma\to\infty.
\]
The convergence of the suprema follows from the inequality
\[
|
\sup_{\theta\in\Theta}R_\sigma(\theta; \Theta)
-\sup_{\theta\in\Theta}R_\infty(\theta; \Theta)
|
\le
\sup_{\theta\in\Theta}
|R_\sigma(\theta; \Theta)-R_\infty(\theta; \Theta)|.
\]
\end{proof}

\begin{proof}[Proof of Theorem~\ref{thm:finite-sigma-reversal}]
Let 
\[
\Delta
:=
\sup_{\theta\in\Theta_S} R_\infty(\theta; \Theta_S)
-
\sup_{\theta\in\Theta_L} R_\infty(\theta; \Theta_L)
>0.
\]
By Lemma~\ref{lem:uniform-large-sigma}, there exists $\sigma_1$ such that for all
$\sigma \ge \sigma_1$,
\[
\sup_{\theta\in\Theta_S}
|R_\sigma(\theta;\Theta_S)-R_\infty(\theta; \Theta_S)|
\le \frac{\Delta}{3},
\qquad
\sup_{\theta\in\Theta_L}
|R_\sigma(\theta;\Theta_L)-R_\infty(\theta; \Theta_L)|
\le \frac{\Delta}{3}.
\]
Therefore, for all 
$\sigma \ge \sigma_1$,
\begin{align*}
\sup_{\theta\in\Theta_S}R_\sigma(\theta;\Theta_S)
\ge
\sup_{\theta\in\Theta_S}R_\infty(\theta; \Theta_S)-\frac{\Delta}{3} 
>
\sup_{\theta\in\Theta_L}R_\infty(\theta; \Theta_L)+\frac{\Delta}{3} 
\ge
\sup_{\theta\in\Theta_L}R_\sigma(\theta;\Theta_L),
\end{align*}
which proves the claim.
\end{proof}

\begin{proof}[Proof of Lemma~\ref{lem:explicit-worst-case-risk}]
For $x\in(0,c^{-1})$, write $\Theta_x:=\operatorname{conv} (v_1,v_2,v_x)$ with $v_x=(x,1)$.
As shown in the proof of Lemma~\ref{lem:ThetaARisk}, the limiting risk takes the form
\[
R_\infty(\theta;\Theta_x)
=
(1-p_2-p_x)
\|\theta\|^2
+
p_2\|v_2-\theta\|^2
+
p_x\|v_x-\theta\|^2,
\]
where
\[
p_2=\frac14+\frac{1}{2\pi}\arctan \frac1c ,
\qquad
p_x=\frac14-\frac{1}{2\pi}\arctan x.
\]

For each $x$, the map $\theta\mapsto R_\infty(\theta;\Theta_x)$ is a convex quadratic function.
Since $\Theta_x$ is compact and convex, its maximum is attained at an extreme point, and hence
\[
\sup_{\theta\in\Theta_x} R_\infty(\theta;\Theta_x)
=
\max\{
R_\infty(v_1;\Theta_x),
R_\infty(v_2;\Theta_x),
R_\infty(v_x;\Theta_x)
\}.
\]
Writing $\alpha_c:=1+c^{-2}$, a direct calculation yields
\begin{align*}
R_\infty(v_1;\Theta_x)
&=\alpha_c p_2+p_x(1+x^2),\\
R_\infty(v_2;\Theta_x)
&=\alpha_c (1-p_2-p_x) +p_x(c^{-1}-x)^2,\\
R_\infty(v_x;\Theta_x)
&=(1-p_2-p_x)(1+x^2)+p_2(c^{-1}-x)^2.
\end{align*}

Evaluating these expressions gives
\[
\sup_{\theta\in\Theta_a} R_\infty(\theta;\Theta_a)
=
\max\{1.324659,\,1.306278,\,0.808860\}
=
1.324659,
\]
and
\[
\sup_{\theta\in\Theta_b} R_\infty(\theta;\Theta_b)
=
\max\{1.385120,\,1.383614,\,1.340221\}
=
1.385120.
\]
Since $a<b$ implies $\Theta_b \subsetneq \Theta_a$, the desired strict inequality follows.
\end{proof}

\subsubsection{Proofs of results in Section~\ref{ssec:beyondGaussianSqLoss}}
\begin{proof}[Proof of Theorem~\ref{thm:radial-beta-eta}]
Fix $\Theta\in\{\Theta_S,\Theta_L\}$. For fixed $y$, the likelihood is
proportional to $\theta
    \mapsto
    q\left(\frac{\|y-\theta\|}{\sigma}\right)$. Since $q$ is strictly decreasing on $[0,\infty)$, maximizing this likelihood
over $\Theta$ is equivalent to minimizing $\|y-\theta\|$ over $\Theta$.
Hence, since $\Theta$ is compact and convex, the constrained MLE is unique and
equals the Euclidean projection $\Pi_\Theta(y)$. Thus the MLE and the
constrained LSE coincide.

We next identify the diverging-noise limit. Since the distribution of $W_q$ depends only on $\|w\|$, the direction $U:=W_q/\|W_q\|$ is uniform on $\mcS^1$ on the event
$\{W_q\neq0\}$, which has probability one. Therefore, by the same diverging-noise
projection argument as in the proof of Theorem~\ref{thm:large-sigma-projection},
\[
    \Pi_\Theta(\theta^\star+\sigma W_q)
    \longrightarrow
    \Pi_{F_\Theta(U)}(\theta^\star)
    \qquad\text{a.s. as } \sigma\to\infty .
\]
Consequently,
\[
    \frac1\eta
    \|
        \Pi_\Theta(\theta^\star+\sigma W_q)-\theta^\star
    \|^\eta
    \longrightarrow
    \frac1\eta
    \|
        \Pi_{F_\Theta(U)}(\theta^\star)-\theta^\star
    \|^\eta
    \qquad\text{a.s. as } \sigma\to\infty .
\]
Moreover, since $\Theta$ is compact and
$\Pi_\Theta(\theta^\star+\sigma W_q)\in\Theta$,
\[
    \frac1\eta
    \|
        \Pi_\Theta(\theta^\star+\sigma W_q)-\theta^\star
    \|^\eta
    \le
    \frac1\eta
    \sup_{\theta\in\Theta}\|\theta-\theta^\star\|^\eta
    <\infty .
\]
Dominated convergence therefore gives
\[
    R_{\sigma}^{(q,\eta)}(\theta^\star;\Theta)
    \longrightarrow
    R_{\infty}^{(\eta)}(\theta^\star;\Theta)
    :=
    \E\frac1\eta
    \|
        \Pi_{F_\Theta(U)}(\theta^\star)-\theta^\star
    \|^\eta
    \qquad\text{as } \sigma\to\infty .
\]
The limiting risk does not depend on $q$, since every admissible radial
density induces the same uniform distribution for the noise direction $U$.

We now compute the limiting gap in the running example, recalling that $\alpha_c := 1 + \frac{1}{c^2}$. The exposed-vertex probabilities are the same as in
Corollary~\ref{cor:DivergingNoiseThetaSLRisks}. For $\Theta_S$, the vertex
$v_2$ is selected with probability $\frac12$, and hence $R_{\infty}^{(\eta)}(\theta^\star;\Theta_S)
    =
    \frac1\eta\cdot \frac12\,\alpha_c^{\eta/2}$. For $\Theta_L$, the vertices $v_2$ and $v_3$ are selected with
probabilities $\frac14+\frac{1}{2\pi}\arctan\frac1c$ and $\frac14$,
respectively. Since
$\|v_2-v_1\|^\eta=\alpha_c^{\eta/2}$ and $\|v_3-v_1\|^\eta=1$, we obtain
\[
    R_{\infty}^{(\eta)}(\theta^\star;\Theta_L)
    =
    \frac1\eta
    \left[
    \left(
        \frac14+\frac{1}{2\pi}\arctan\frac1c
    \right)
    \alpha_c^{\eta/2}
    +
    \frac14
    \right].
\]
Therefore
\[
    R_{\infty}^{(\eta)}(\theta^\star;\Theta_S)
    -
    R_{\infty}^{(\eta)}(\theta^\star;\Theta_L)
    =
    \frac1\eta
    \left[
    \left(
        \frac14-\frac{1}{2\pi}\arctan\frac1c
    \right)
    \alpha_c^{\eta/2}
    -
    \frac14
    \right]
    =
    \frac1\eta\,\Delta_\eta(c).
\]
If $\Delta_\eta(c)>0$, then the limiting gap is positive. Since the
finite-noise risks converge to their corresponding limiting risks as
$\sigma\to\infty$, the same strict inequality holds for all sufficiently
large $\sigma$.

It remains to prove the stated sufficient conditions. As $c\to0^+$,
\[
    \frac14-\frac{1}{2\pi}\arctan\frac1c
    \sim
    \frac{c}{2\pi},
    \qquad
    \alpha_c^{\eta/2}
    \sim
    c^{-\eta}.
\]
Hence $\Delta_\eta(c) \sim \frac{1}{2\pi}c^{1-\eta}-\frac14$. If $\eta>1$, this tends to $+\infty$ as $c\to0^+$. Therefore
$\Delta_\eta(c)>0$ for all sufficiently small $c>0$.

Finally, suppose $\eta\ge2$. Since $\alpha_c\ge1$, we have
$\alpha_c^{\eta/2}\ge\alpha_c$. Also $\frac14-\frac{1}{2\pi}\arctan\frac1c > 0$ for every $c>0$. Therefore
\[
    \Delta_\eta(c)
    \ge
    \left(
        \frac14-\frac{1}{2\pi}\arctan\frac1c
    \right)
    \alpha_c
    -
    \frac14
    =
    \Delta_2(c).
\]
By Corollary~\ref{cor:DivergingNoiseThetaSLRisks}, $\Delta_2(c)>0$ for every
$0<c<1$. Thus $\Delta_\eta(c)>0$ for every $0<c<1$ whenever
$\eta\ge2$.
\end{proof}
\begin{proof}[Proof of Theorem~\ref{thm:radial-worst-case-all-eta}]
Let $ v_-:=(-1,0)$, $v_+:=(1,0)$ and $v_0:=(0,h)$ where $h>0$ will be chosen below. Define $\Theta_S=\operatorname{conv}(v_-,v_+)$ and $\Theta_L=\operatorname{conv}(v_-,v_+,v_0)$. Both sets are nonempty, compact, and convex, and for $h>0$ we have
$\Theta_S\subsetneq\Theta_L$.

We first identify the diverging-noise limit. Fix
$\Theta\in\{\Theta_S,\Theta_L\}$. Arguing exactly as in the proof of
Theorem~\ref{thm:radial-beta-eta}, the almost-sure convergence
\eqref{eq:large-sigma-a.s}, together with dominated convergence, gives, for
each fixed $\theta\in\Theta$,
\[
    R_\sigma^{(q,\eta)}(\theta;\Theta)
    \longrightarrow
    R_{\infty}^{(\eta)}(\theta;\Theta)
    :=
    \E \frac1\eta
    \|
        \Pi_{F_\Theta(U)}(\theta)-\theta
    \|^\eta
    \qquad\text{as } \sigma\to\infty,
\]
where $U\sim\operatorname{Unif}(\mcS^1)$.

We also use uniform convergence over $\theta\in\Theta$. This is
obtained by repeating the proof of Lemma~\ref{lem:uniform-large-sigma}. The only change is that the squared loss $\|x\|^2$ is replaced by
$x\mapsto \eta^{-1}\|x\|^\eta$. Since this map is bounded and Lipschitz on
the compact set $\Theta-\Theta$, the same equicontinuity and compactness
argument used in that proof gives
\[
    \sup_{\theta\in\Theta}
    |
        R_\sigma^{(q,\eta)}(\theta;\Theta)
        -
        R_{\infty}^{(\eta)}(\theta;\Theta)
    |
    \longrightarrow 0
    \qquad\text{as } \sigma\to\infty .
\]

We now compute the limiting worst-case risks. For $\Theta_S$, the two
endpoints are selected with probability $\frac{1}{2}$ each. Thus, for
$\theta=(t,0)\in\Theta_S$, with $t\in[-1,1]$,
\[
    R_{\infty}^{(\eta)}(\theta;\Theta_S)
    =
    \frac1{2\eta}
    \left(
        |t+1|^\eta + |1-t|^\eta
    \right).
\]
This is a convex function of $t$, so its maximum over $[-1,1]$ is attained
at an endpoint. Therefore
\[
    \overline R_{\infty}^{(\eta)}(\Theta_S)
    :=
    \sup_{\theta\in\Theta_S}
    R_{\infty}^{(\eta)}(\theta;\Theta_S)
    =
    \frac1\eta\,2^{\eta-1}.
\]

For $\Theta_L$, let $p_{0,h}$ denote the normal-cone probability of the
vertex $v_0$. By symmetry, the normal-cone probabilities of $v_-$ and
$v_+$ are equal; writing this common value as $p_h$, we have $2p_h+p_{0,h}=1$.

Since $h>0$, $v_0$ is a genuine vertex of $\Theta_L$. Hence $v_0$ is
exposed by a nontrivial set of directions, in the sense that there is a direction
$u_0\in\mcS^1$ and an $\varepsilon>0$ such that, whenever
$\|u-u_0\|<\varepsilon$, the linear functional
$x\mapsto \langle u,x\rangle$ is uniquely maximized over $\Theta_L$ at
$v_0$. Since $U$ is uniform on $\mcS^1$, this event has positive
probability. Thus $p_{0,h}>0$.

The limiting risk over $\Theta_L$ is a weighted sum of functions of the form
$\theta\mapsto \|v-\theta\|^\eta$, where
$v\in\{v_-,v_+,v_0\}$. Since $\eta>1$, each such function is convex in
$\theta$. Hence $R_{\infty}^{(\eta)}(\theta;\Theta_L)$ is convex in
$\theta$. A convex function on a compact polytope attains its maximum at an
extreme point, so it suffices to evaluate the limiting risk at
$v_-$, $v_+$, and $v_0$.

Set
\[
    r_h:=\|v_0-v_-\|=\|v_0-v_+\|=\sqrt{1+h^2}.
\]
At $v_-$ and $v_+$, the limiting risk is
\[
    \frac1\eta\left(p_h 2^\eta+p_{0,h}r_h^\eta\right)
    =
    \frac1\eta
    \left((1-p_{0,h})2^{\eta-1}+p_{0,h}r_h^\eta\right).
\]
At $v_0$, the limiting risk is $\frac1\eta\left(2p_h r_h^\eta\right)
    =
    \frac1\eta (1-p_{0,h})r_h^\eta$.
Since $\eta>1$, we can choose $h>0$ sufficiently small that $r_h^\eta=(1+h^2)^{\eta/2}<2^{\eta-1}$.  For this choice of $h$, the first displayed quantity is a strict convex
combination of $2^{\eta-1}$ and $r_h^\eta$, with positive weight on
$r_h^\eta$, because $p_{0,h}>0$. Hence $(1-p_{0,h})2^{\eta-1}+p_{0,h}r_h^\eta < 2^{\eta-1}$. Moreover, $ (1-p_{0,h})r_h^\eta
    <
    r_h^\eta
    <
    2^{\eta-1}$. Therefore, $\overline R_{\infty}^{(\eta)}(\Theta_L)
    <
    \frac1\eta\,2^{\eta-1}
    =
    \overline R_{\infty}^{(\eta)}(\Theta_S)$.

Thus the limiting worst-case gap is strictly positive, $\overline R_{\infty}^{(\eta)}(\Theta_S)
    -
    \overline R_{\infty}^{(\eta)}(\Theta_L)
    >0$. By the uniform convergence of the finite-noise risks to their diverging-noise
limits, the same strict inequality persists for all sufficiently large
$\sigma$. That is, there exists $\sigma_1<\infty$ such that, for all
$\sigma\ge\sigma_1$,
\[
    \sup_{\theta\in\Theta_S}
    R_{\sigma}^{(q,\eta)}(\theta;\Theta_S)
    >
    \sup_{\theta\in\Theta_L}
    R_{\sigma}^{(q,\eta)}(\theta;\Theta_L).
\]
This proves the claim.
\end{proof}

\section{Conclusion}
This work identifies a previously unrecognized failure mode of the constrained least squares estimator. We show that the risk of the LSE need not improve --- and can in fact worsen --- when the feasibility constraint is tightened, even when the constraint is compact, convex, and correctly specified. 

Importantly, this phenomenon arises at multiple levels. We establish risk reversal both pointwise, at fixed parameter values, and in a worst-case sense, where the worst-case risk over the smaller feasible set exceeds that over the larger set. The latter shows that risk reversal is not a localized pathology, but can reflect a genuine degradation in global performance, underscoring its relevance for statistical practice.

The underlying mechanism is geometric. In the vanishing-noise regime, the leading-order risk is governed by the tangent cone and is monotone under set inclusion. In the diverging-noise regime, by contrast, the estimator is governed by exposed faces selected by the noise direction. Tightening the constraint can then redirect probability toward faces that are farther from the true parameter, producing a larger risk. This perspective yields explicit constructions for polytopes as well as smooth, full-dimensional examples in which the true parameter lies in the interior of both constraint sets.

The phenomenon is also not specific to Gaussian noise or squared-error loss. For a broad class of radial location models and radial loss functions, the constrained MLE remains the Euclidean projection, and the same exposed-face mechanism gives both pointwise and worst-case risk reversal.

These results show that the statistical performance of projection-based estimators depends not only on the size of the feasible set, but also on how its global geometry interacts with the noise. Natural directions for future work include designing statistically principled procedures that avoid risk reversal and determining whether the phenomenon persists when the constrained likelihood estimator is not a Euclidean projection.

\section*{Acknowledgments}
I am grateful to P. Rigollet for many helpful discussions, and I thank S. Kotekal and M. Neykov for insightful feedback on an early draft. This work was supported by funding from the Eric and Wendy Schmidt Center at the Broad Institute of MIT and Harvard.

\appendix
\section*{Supplementary Material}
\addcontentsline{toc}{section}{Supplementary Material}

\section{Finite-noise risk analysis}\label{sec:auxresults}

In this section, we derive explicit, finite-$\sigma$ risk expressions for the LSE over $\Theta_S, \Theta_L$ in the running example.

\subsection{Background on Owen's \texorpdfstring{$T$}{T}-function}
The risk formulas in Theorem~\ref{thm:hatthetaLExactRisk} are expressed in terms of \emph{Owen's $T$-function} \cite{owen1980table}. We recall here its definition and basic properties. For $h,a\in\R$, let
\[
T(h,a)
:=
\phi(h)\int_0^a \frac{\phi(hz)}{1+z^2}dz,
\]
where $\phi$ denotes the probability density function of a unit Gaussian. We write $\Phi$ to denote the cumulative distribution function of a unit Gaussian. 

The function $T$ satisfies the following basic identities:
\[
\begin{gathered}
    T(\infty,a)=0,\qquad
    T(h,0)=0,\qquad
    T(0,a)=\frac{1}{2\pi}\arctan a, \\
    T(h,\infty)=\frac12\Phi(-h)\ \text{for }h\ge0, \qquad
    T(h,-a)=-T(h,a),
\end{gathered}
\]
where expressions involving infinite or zero arguments are understood in the sense of limits.
We also make use of the following Gaussian integral identities.

\begin{proposition}
\label{prop:OwensTProp}
Let $a,b\in\R$ and write $\sfr_{a,b}:=\frac{a}{\sqrt{1+b^2}}$. The following identities hold.
\begin{enumerate}[label=(\roman*)]
\item If $a\neq0$ and $m>0$, then
\begin{align*}
    \int_0^m \phi(z)\Phi(a+bz)\,dz
    &=
    T \inparen{m, \frac{\sfr_{a,b}}{m}}
    + T \inparen{\sfr_{a,b}, \frac{m}{\sfr_{a,b}}}
    - T \inparen{m, \frac{a}{m}+b} \\
    &\quad
    - T \inparen{\sfr_{a,b}, b+\frac{m}{a}(1+b^2)}
    + T \inparen{\sfr_{a,b},b} \\
    &\quad
    + \Phi(m)\Phi \inparen{\sfr_{a,b}}
    -\frac{1}{2}\Phi \inparen{\sfr_{a,b}} .
\end{align*}
\item For any $b\in\R$ and $m\ge 0$,
\begin{align*}
\int_0^m \phi(z)\Phi(bz)dz
=
\frac12\Phi(m)-\frac14-T(m,b)+\frac{1}{2\pi}\arctan b.
\end{align*}
In particular, this identity is obtained by taking the limit $a\to 0$ in \textup{(i)}.
\item
For $a,b\in\R$ and $m\ge 0$,
\begin{align*}
\int_0^m z\phi(z)\phi(a+bz)dz
&=
\frac{1}{1+b^2}\phi \inparen{\sfr_{a,b}}
\insquare{
\phi \inparen{b\sfr_{a,b}}
- \phi \inparen{ m \sqrt{1+b^2} + b\sfr_{a,b}}
} \\
&\quad
+
\frac{ab}{(1+b^2)^{3/2}}\phi \inparen{\sfr_{a,b}}
\insquare{
\Phi \inparen{b\sfr_{a,b}}
- \Phi \inparen{ m \sqrt{1+b^2} + b\sfr_{a,b}}
}.
\end{align*}
\end{enumerate}
\end{proposition}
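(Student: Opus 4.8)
The three identities are of increasing difficulty, and I would treat them in the order (iii), (i), (ii), using (i) to deduce (ii). Throughout I would rely on the elementary boundary values of Owen's $T$ recalled just above the proposition, namely $T(h,0)=0$, $T(0,a)=\tfrac{1}{2\pi}\arctan a$, and $T(h,\infty)=\tfrac12\Phi(-h)$.

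\textbf{Identity (iii).} Here there is no $T$-function on the right-hand side, so the cleanest route is completing the square. Writing the exponent of $\phi(z)\phi(a+bz)$ as $-\tfrac12[(1+b^2)z^2+2abz+a^2]$ and completing the square in $z$ gives
\[
\phi(z)\phi(a+bz)=\frac{1}{\sqrt{1+b^2}}\,\phi(\sfr_{a,b})\,\varphi_{\mu_0,\sigma_0}(z),
\]
where $\sfr_{a,b}=a/\sqrt{1+b^2}$, $\mu_0=-ab/(1+b^2)$, $\sigma_0^2=1/(1+b^2)$, and $\varphi_{\mu_0,\sigma_0}$ is the $N(\mu_0,\sigma_0^2)$ density. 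Substituting and applying the truncated first-moment formula $\int_0^m z\,\varphi_{\mu_0,\sigma_0}(z)\,dz=\mu_0[\Phi(\tilde m)-\Phi(\tilde 0)]-\sigma_0[\phi(\tilde m)-\phi(\tilde 0)]$, with $\tilde 0=b\sfr_{a,b}$ and $\tilde m=m\sqrt{1+b^2}+b\sfr_{a,b}$, reproduces the stated expression after collecting the $\phi$ and $\Phi$ terms. This step is purely mechanical.

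\textbf{Identity (i).} This is the crux. My plan is to pass to a bivariate-normal probability: taking $X,W\iid N(0,1)$, one has $\int_{-\infty}^{m}\phi(z)\Phi(a+bz)\,dz=\P(X\le m,\ W-bX\le a)$, and after standardizing $Y:=(W-bX)/\sqrt{1+b^2}$ (which is $N(0,1)$ with $\tcov(X,Y)=\rho:=-b/\sqrt{1+b^2}$) this becomes the standard bivariate normal CDF $\Phi_2(m,\sfr_{a,b};\rho)$. Writing $\int_0^m=\Phi_2(m,\sfr_{a,b};\rho)-\Phi_2(0,\sfr_{a,b};\rho)$ and invoking the classical reduction of the bivariate normal CDF to Owen's $T$ tabulated in \cite{owen1980table}, each orthant probability expands into $T$-terms whose second arguments are precisely $\sfr_{a,b}/m$, $m/\sfr_{a,b}$, $a/m+b$, $b+\tfrac{m}{a}(1+b^2)$, and $b$; the difference then collapses to the seven-term expression, with $\Phi_2(0,\sfr_{a,b};\rho)$ supplying the $T(\sfr_{a,b},b)$ and $-\tfrac12\Phi(\sfr_{a,b})$ contributions. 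The main obstacle is the bookkeeping in this reduction: Owen's bivariate-to-$T$ formula carries case-dependent $\tfrac14(1-\operatorname{sign}\cdot\operatorname{sign})$ correction terms and sign conventions on the $T$-arguments, and the delicate point is verifying that these cancel and recombine into exactly the stated identity. An alternative, more self-contained route is to verify (i) by differentiating both sides in $m$ using $\partial_h T(h,a)=-\phi(h)[\Phi(ah)-\tfrac12]$ and $\partial_a T(h,a)=\tfrac{1}{2\pi}(1+a^2)^{-1}e^{-h^2(1+a^2)/2}$, checking that the derivative telescopes to $\phi(m)\Phi(a+bm)$ and that both sides vanish at $m=0$; this trades the sign bookkeeping for an equally careful derivative computation.

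\textbf{Identity (ii).} Finally, (ii) is the specialization $a\to 0^+$ of (i). As $a\to0^+$ one has $\sfr_{a,b}\to0$, and the seven terms limit using the boundary values above: the two $T(\cdot,\infty)$ terms each contribute $\tfrac14$ with opposite signs, $T(m,\sfr_{a,b}/m)\to0$, $-T(m,a/m+b)\to -T(m,b)$, $\Phi(m)\Phi(\sfr_{a,b})\to\tfrac12\Phi(m)$, $-\tfrac12\Phi(\sfr_{a,b})\to-\tfrac14$, and $T(\sfr_{a,b},b)\to\tfrac{1}{2\pi}\arctan b$. Collecting these yields $\tfrac12\Phi(m)-\tfrac14-T(m,b)+\tfrac{1}{2\pi}\arctan b$, as claimed. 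The only care needed is confirming the signs of the divergent second arguments, so that the correct one-sided limits of $T(0,\pm\infty)$ are taken.
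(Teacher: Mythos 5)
Your proposal is correct, but it cannot be compared against a proof in the paper because there is none: Proposition~\ref{prop:OwensTProp} is quoted as a collection of classical integral identities from Owen's table \cite{owen1980table} and used as a black box in the exact risk computations of Theorem~\ref{thm:hatthetaLExactRisk}. What you have written is therefore a self-contained derivation of facts the paper simply cites, and it checks out: part (iii) by completing the square is exactly right; the limits in part (ii) are all correct (in particular the joint limits $T(\sfr_{a,b},m/\sfr_{a,b})\to\tfrac14$ and $T\inparen{\sfr_{a,b},b+\tfrac{m}{a}(1+b^2)}\to\tfrac14$, since both second arguments diverge to $+\infty$ as $a\to0^+$ while the products of first and second arguments stay bounded); and the bivariate-normal route to part (i) is sound, since $\int_{-\infty}^m\phi(z)\Phi(a+bz)\,dz=\Phi_2(m,\sfr_{a,b};\rho)$ with $\rho=-b/\sqrt{1+b^2}$. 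One imprecision in (i) is worth fixing: Owen's $\Phi_2$-to-$T$ reduction applied to $\Phi_2(m,\sfr_{a,b};\rho)-\Phi_2(0,\sfr_{a,b};\rho)$ produces only the arguments $\tfrac{a}{m}+b$, $b+\tfrac{m}{a}(1+b^2)$, and $b$ (plus degenerate $T(0,\pm\infty)$ constants), yielding, for $a>0$, the four-term identity
\[
\int_0^m\phi(z)\Phi(a+bz)\,dz
=\tfrac12\Phi(m)-T\inparen{m,\tfrac{a}{m}+b}-T\inparen{\sfr_{a,b},b+\tfrac{m}{a}(1+b^2)}+T(\sfr_{a,b},b);
\]
the terms $T(m,\sfr_{a,b}/m)$, $T(\sfr_{a,b},m/\sfr_{a,b})$, $\Phi(m)\Phi(\sfr_{a,b})$, and $-\tfrac12\Phi(\sfr_{a,b})$ in the stated seven-term form do not come from either orthant probability, but from the separate reciprocal-argument identity $T(h,\lambda)+T(\lambda h,1/\lambda)=\tfrac12[\Phi(h)+\Phi(\lambda h)]-\Phi(h)\Phi(\lambda h)-\delta_\lambda$ (with $\delta_\lambda=\tfrac12$ for $\lambda<0$ and $0$ otherwise) used to rewrite $\tfrac12\Phi(m)$; it is precisely this step that absorbs the sign corrections when $a<0$ and makes the seven-term statement hold uniformly in the sign of $a$. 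This is exactly the bookkeeping you flagged, and your fallback—verifying (i) by differentiation in $m$, using the (correct) formulas $\partial_hT(h,\lambda)=-\phi(h)\bigl[\Phi(\lambda h)-\tfrac12\bigr]$ and $\partial_\lambda T(h,\lambda)=\tfrac{1}{2\pi}(1+\lambda^2)^{-1}e^{-h^2(1+\lambda^2)/2}$ together with the vanishing of both sides as $m\to0^+$—closes the gap rigorously and avoids the case analysis altogether.
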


\begin{proof} 
The first identity is obtained from \cite[Table I, entry $10,010.3$]{owen1980table}. The second follows from
Owen~\cite[Table I, entry $1,010.1$]{owen1980table}. The third follows from Owen~\cite[Table I, entry $111$]{owen1980table}.
\end{proof}

\subsubsection{Risk analysis of \texorpdfstring{$\hattheta_L$}{theta-hat L}}
The LSE $\hattheta_L$ exhibits piecewise behavior depending on the location of $y \in \R^2$.
Define $s_c(y) := \frac{y_1 + c y_2}{1+c^2}$. We partition $\R^2$ into the following regions (see Figure~\ref{fig:side_by_side}), on each of which $\hattheta_L$ admits a simple closed-form expression.
\begin{table}[htbp]
\centering
\caption{Projection regions for  $\hattheta_L$.}
\label{tab:regions}

\begingroup
\renewcommand{\arraystretch}{1.35}
\setlength{\tabcolsep}{6pt}

\begin{tabular}{l|p{5.6cm}|l|l}
\hline
Region 
& Definition $(y_1,y_2)$ 
& $\hat\theta_L(y)$ 
& Face of $\Theta_L$ \\
\hline\hline

$\Theta_L$
& $0 \le y_1 \le \frac{1}{c},\; 0 \le y_2 \le 1,\; y_2 \ge c y_1$
& $(y_1,y_2)$
& Interior \\
\hline

$A_1$
& $y_1 \le -c y_2,\; y_2 \le 0$
& $(0,0)$
& Vertex $v_1$ \\
\hline

$A_2$
& $y_1 \ge \frac{1}{c},\; y_2 \ge 1 - \frac{1}{c}\bigl(y_1 - \frac{1}{c}\bigr)$
& $(\frac{1}{c},1)$
& Vertex $v_2$ \\
\hline

$A_3$
& $y_1 \le 0,\; y_2 \ge 1$
& $(0,1)$
& Vertex $v_3$ \\
\hline

$A_{12}$
& $y_2 \le c y_1,\; 0 \le s_c(y) \le \frac{1}{c}$
& $(s_c(y), c s_c(y))$
& Edge $[v_1,v_2]$ \\
\hline

$A_{13}$
& $y_1 \le 0,\; 0 \le y_2 \le 1$
& $(0,y_2)$
& Edge $[v_1,v_3]$ \\
\hline

$A_{23}$
& $0 \le y_1 \le \frac{1}{c},\; y_2 \ge 1$
& $(y_1,1)$
& Edge $[v_2,v_3]$ \\
\hline
\end{tabular}

\endgroup
\end{table}

\begin{theorem}\label{thm:hatthetaLExactRisk}
    In the running example,
    
\begin{align*}
&R_\sigma(\theta^\star; \Theta_L)
=
\sigma^2\Bigg[
\frac{1}{\sigma}\phi\left(\frac{1}{\sigma}\right)
\left(\frac12-\Phi\left(\frac{1}{c\sigma}\right)\right)
-2T\left(\frac{1}{\sigma},\frac{1}{c}\right)
+\frac{1}{\pi}\arctan \left ( \frac{1}{c} \right)
\Bigg] \\
&
+\alpha_c\Bigg[
\Phi\left(-\frac{\sqrt{\alpha_c}}{\sigma}\right)
\Phi\left(-\frac{1}{c\sigma}\right)
+T\left(\frac{1}{c\sigma},c\sqrt{\alpha_c}\right)
+T\left(\frac{\sqrt{\alpha_c}}{\sigma},\frac{1}{c\sqrt{\alpha_c}}\right)
-T\left(\frac{1}{c\sigma},c\right)
\Bigg] \\
&
+\frac12\Phi\left(-\frac{1}{\sigma}\right) 
+\frac{\sigma^2}{2}\Bigg(
\Phi\left(\frac{\sqrt{\alpha_c}}{\sigma}\right)
-\frac12
-\frac{\sqrt{\alpha_c}}{\sigma}
\phi\left(\frac{\sqrt{\alpha_c}}{\sigma}\right)
\Bigg) \\
&
+\frac{\sigma^2}{2}\Bigg(
\Phi\left(\frac{1}{\sigma}\right)
-\frac{1}{\sigma}\phi\left(\frac{1}{\sigma}\right)
-\frac12
\Bigg) 
+\Phi\left(-\frac{1}{\sigma}\right)
\Bigg[
(1+\sigma^2)
\left(
\Phi\left(\frac{1}{c\sigma}\right)-\frac12
\right)
-\frac{\sigma}{c}\phi\left(\frac{1}{c\sigma}\right)
\Bigg],
\end{align*}
    where $\alpha_c = 1+\frac{1}{c^2}$.
\end{theorem}

\begin{proof}
    We compute the contribution to the risk from each of the seven regions defined
in Table~\ref{tab:regions}, namely
\[
    \Theta_L,\ A_1,\ A_2,\ A_3,\ A_{12},\ A_{13},\ A_{23}.
\]
Throughout, we write $r_\sigma(\theta^\star; A)$ to denote the risk restricted to a region $A$, namely
\[
r_\sigma(\theta^\star; A)
:= \E\big[ \|\hattheta_L - \theta^\star\|^2 \indicator\{Y \in A\} \big].
\]
 The final expression is simply the sum of these partial risks, 
\begin{align*} R_\sigma(\theta^\star; \Theta_L) &= r_\sigma(\theta^\star; \Theta_L) + 
r_\sigma(\theta^\star; A_1) +
r_\sigma(\theta^\star; A_2) + r_\sigma(\theta^\star; A_3) \\ 
& + r_\sigma(\theta^\star; A_{12}) + r_\sigma(\theta^\star; A_{13}) + r_\sigma(\theta^\star; A_{23}). 
\end{align*} 

    \noindent
    \textit{Risk on $\Theta_L$:} $\hattheta_L = (y_1,y_2)$, and so $\|\hattheta_L - \theta^\star\|^2 = y_1^2 + y_2^2$. We have 
\begin{align*}
    r_\sigma(\theta^\star; \Theta_L)
    &=
    \int_{y_2=0}^1 \int_{y_1=0}^{\frac{y_2}{c}} 
    (y_1^2 + y_2^2)
    \frac{\phi\inparen{\frac{y_1}{\sigma}}}{\sigma} \frac{\phi\inparen{\frac{y_2}{\sigma}}}{\sigma} dy_1 dy_2 
    =: T_1+T_2.
\end{align*}
For $T_1$, we have
\begin{align*}
    T_1 &= \sigma^2 \int_{y_2=0}^1 
    \frac{\phi\inparen{\frac{y_2}{\sigma}}}{\sigma}
    \inparen{
    \int_0^{\frac{y_2}{c\sigma}} z^2 \phi(z) dz 
    } dy_2\\
    &=\sigma^2 \int_0^{\frac{1}{\sigma}} \phi(z) \inparen{\Phi\inparen{\frac{z}{c}} - \frac{z}{c} \phi\inparen{\frac{z}{c}} - \frac{1}{2}}dz\\
    &=\sigma^2 \inparen{
    \frac{1}{2\pi} \arctan \frac{1}{c}
    - T \inparen{\frac{1}{\sigma}, \frac{1}{c}} 
    - \frac{\phi(0)}{c \alpha_c} \inparen{\phi(0) - \phi \inparen{\frac{\sqrt{\alpha_c}}{\sigma}}}
    },
\end{align*}
    where the last line follows from Proposition~\ref{prop:OwensTProp} (ii), with $m=\frac1\sigma$ and $b=\frac1c$, and from 
    Proposition~\ref{prop:OwensTProp} (iii) with $m=\frac1\sigma$, $a=0$, and $b=\frac1c$. In the latter application, $\sfr_{a,b}=0$.
    
    For $T_2$,
    \begin{align*}
    T_2
    &= \sigma^2 \int_0^{\frac{1}{\sigma}} z^2 \phi(z)
    \left(\Phi\left(\frac{z}{c}\right)-\frac{1}{2}\right)\,dz \\
    &= \sigma^2 \Bigg(
        \frac{1}{\sigma}\phi\left(\frac{1}{\sigma}\right)
        \left(\frac{1}{2}-\Phi\left(\frac{1}{c\sigma}\right)\right)
        - T\left(\frac{1}{\sigma},\frac{1}{c}\right) \\
    &\qquad\qquad
        + \frac{1}{2\pi} \arctan \left ( \frac{1}{c} \right)
        + \frac{\phi(0)}{c\alpha_c}
        \left[
            \phi(0)-\phi\left(\frac{\sqrt{\alpha_c}}{\sigma}\right)
        \right]
    \Bigg).
\end{align*}
Here we again used Proposition~\ref{prop:OwensTProp} (ii) and 
Proposition~\ref{prop:OwensTProp} (iii) with $m=\frac{1}{\sigma}$, $a=0$, and $b=\frac{1}{c}$, as above.
   
Putting it together, we have 
\begin{align*}
    r_\sigma(\theta^\star; \Theta_L)
    &=\sigma^2 \inparen{
    \frac{1}{\sigma} \phi \inparen{\frac{1}{\sigma}} \inparen{\frac{1}{2} - \Phi \inparen{\frac{1}{c\sigma}}}
    - 2T \inparen{\frac{1}{\sigma}, \frac{1}{c}} 
    + \frac{1}{\pi} \arctan \frac{1}{c}
    }.
\end{align*}
    \noindent
    \textit{Risk on $A_1$:} $\hattheta_L = (0,0)$, so $r_\sigma(\theta^\star; A_1) =0$.

    \noindent

\textit{Risk on $A_2$:} $\hattheta_L = (\frac1c,1)$. We have 
\begin{align*}
    r_\sigma(\theta^\star; A_2)
    &=
    \alpha_c 
    \inparen{ 
    \int_{y_1=\frac1c}^{\infty} \int_{y_2=1}^{\infty} 
    +
    \int_{y_1=\frac1c}^{\infty} \int_{y_2=-\frac{y_1}{c} + \alpha_c}^{1} 
    }
    \frac{\phi\inparen{\frac{y_1}{\sigma}}}{\sigma}
    \frac{\phi\inparen{\frac{y_2}{\sigma}}}{\sigma}
    dy_2 dy_1
    =: T_1 +T_2.
\end{align*}

It is straightforward to show
\begin{align*}
    T_1 = \alpha_c \Phi \inparen{-\frac{1}{c\sigma}}\Phi \inparen{-\frac{1}{\sigma}}.
\end{align*}
For $T_2$, direct calculation gives
\begin{align*}
    T_2 
    &= \alpha_c \int_{\frac{1}{c\sigma}}^\infty  \phi(z) \insquare{\Phi\inparen{\frac{1}{\sigma}}
    -\Phi\inparen{\frac{\alpha_c}{\sigma} - \frac{z}{c}}
    } dz
    =\alpha_c \inparen{
    \Phi \inparen{-\frac{1}{c\sigma}}\Phi \inparen{\frac{1}{\sigma}}
    - T_{21}
    }.
\end{align*}

By 
Proposition~\ref{prop:OwensTProp} (i), applied with \[ m=\frac{1}{c\sigma}, \qquad a=\frac{\alpha_c}{\sigma}, \qquad b=-\frac1c, \qquad \sfr_{a,b}=\frac{\sqrt{\alpha_c}}{\sigma}, \] to the difference \[ \int_m^\infty \phi(z)\Phi(a+bz)\,dz = \int_0^\infty \phi(z)\Phi(a+bz)\,dz - \int_0^m \phi(z)\Phi(a+bz)\,dz, \]
we have

\begin{align*}
    T_{21} &=
    \Phi \inparen{\frac{\sqrt{\alpha_c}}{\sigma}}\Phi \inparen{-\frac{1}{c\sigma}}
    -T \inparen{\frac{1}{c\sigma}, c\sqrt{\alpha_c}}
    -T \inparen{\frac{\sqrt{\alpha_c}}{\sigma}, \frac{1}{c\sqrt{\alpha_c}}} 
    + T \inparen{\frac{1}{c\sigma}, c}.
\end{align*}
Putting it together and simplifying gives
\begin{align*}
     r_\sigma(\theta^\star; A_2)
     &=
     \alpha_c \Bigg(
     \Phi \left(-\frac{\sqrt{\alpha_c}}{\sigma}\right)
     \Phi \left(-\frac{1}{c\sigma}\right) \\
     &\qquad\qquad
     + T \left(\frac{1}{c\sigma}, c\sqrt{\alpha_c}\right)
     + T \left(\frac{\sqrt{\alpha_c}}{\sigma},
        \frac{1}{c\sqrt{\alpha_c}}\right)
     - T \left(\frac{1}{c\sigma}, c\right)
     \Bigg).
\end{align*}
    \noindent
\textit{Risk on $A_3$:} $\hattheta_L = (0,1)$. We have 
\begin{align*}
    r_\sigma(\theta^\star; A_3)
    &=
    \int_{y_2=1}^\infty \int_{y_1=-\infty}^0 \frac{\phi\inparen{\frac{y_1}{\sigma}}}{\sigma} \frac{\phi\inparen{\frac{y_2}{\sigma}}}{\sigma} dy_1 dy_2 = \frac{1}{2} \Phi \inparen{\frac{-1}{\sigma}}.
\end{align*}

\noindent
\textit{Risk on $A_{12}$:} $\hattheta_L = (s_c(y), cs_c(y))$ where $s_c(y) = \frac{y_1+cy_2}{1+c^2}$. Directly computing integrals in this case is particularly tedious and so we opt for a statistical approach instead. Note that 
\begin{align*}
    \|\hattheta_L\|^2 = (1+c^2) (s_c(y))^2 = \frac{(y_1 + cy_2)^2}{1+c^2}.
\end{align*}
Define 
\begin{align*}
    X := \frac{Z_1 + cZ_2}{\sqrt{1+c^2}}, 
    \qquad 
    W := \frac{Z_2 - cZ_1}{\sqrt{1+c^2}}.
\end{align*}
Note that $\|\hattheta_L\|^2 = \sigma^2 X^2$. We now rewrite the event $A_{12}$ in terms of $X,W$. First,  
\begin{align*}
    y_2 \le cy_1 \iff Z_2 \le cZ_1 \iff W \le 0,
\end{align*}
and second
\begin{align*}
    0 \le s_c(y) \le \frac1c \iff 0 \le X \le \frac{\sqrt{\alpha_c}}{\sigma}.
\end{align*}
Hence, $A_{12} = \{ W \le 0\} \cap \{ 0 \le X \le  \frac{\sqrt{\alpha_c}}{\sigma} \}$. 

Since $Z=(Z_1,Z_2)\sim N(0,I_2)$ is rotationally invariant and $(X,W)$ is obtained from $Z$ by the above orthogonal change of coordinates, it follows that $(X,W)\sim N(0,I_2)$. 
In particular, $X,W\overset{\mathrm{i.i.d.}}{\sim}N(0,1)$.

Therefore, 
\begin{align*}
    r_\sigma(\theta^\star; A_{12})
    &= \E[\| \hattheta_L \|^2 \indicator \{A_{12}\}]= \sigma^2 \E \insquare{ X^2 \indicator \{ W \le 0\} \indicator \inbraces{ 0 \le X \le  \frac{\sqrt{\alpha_c}}{\sigma} } }\\
    &= \sigma^2 \E \insquare{ X^2 \indicator \inbraces{ 0 \le X \le  \frac{\sqrt{\alpha_c}}{\sigma} }} \P(W \le 0)
    = \frac{\sigma^2}{2} \int_0^{\frac{\sqrt{\alpha_c}}{\sigma}} x^2 \phi(x)dx \\
    &= \frac{\sigma^2}{2} \inparen{\Phi\inparen{\frac{\sqrt{\alpha_c}}{\sigma}}-\frac{1}{2} - \frac{\sqrt{\alpha_c}}{\sigma} \phi\inparen{\frac{\sqrt{\alpha_c}}{\sigma}}},
\end{align*}
where the third equality follows by the independence of $X,W$.

\noindent
\textit{Risk on $A_{13}$:} $\hattheta_L = (0,y_2)$. We have 
\begin{align*}
    r_\sigma(\theta^\star; A_{13})
    &=
    \int_{y_2=0}^1 \int_{y_1=-\infty}^0 y_2^2 \frac{\phi\inparen{\frac{y_1}{\sigma}}}{\sigma} \frac{\phi\inparen{\frac{y_2}{\sigma}}}{\sigma} dy_1 dy_2
    = \frac{\sigma^2}{2} \int_{0}^{\frac{1}{\sigma}} z^2 \phi(z)dz \\
    &= \frac{\sigma^2}{2} \inparen{ \Phi \inparen{\frac{1}{\sigma}} - \frac{1}{\sigma} \phi \inparen{\frac{1}{\sigma}}-\frac{1}{2}}.
\end{align*}

\noindent
\textit{Risk on $A_{23}$:} $\hattheta_L = (y_1,1)$. We have 
\begin{align*}
    r_\sigma(\theta^\star; A_{23})
    &=
    \int_{y_2=1}^\infty \int_{y_1=0}^{\frac{1}{c}} (1+y_1^2)\frac{\phi\inparen{\frac{y_1}{\sigma}}}{\sigma} \frac{\phi\inparen{\frac{y_2}{\sigma}}}{\sigma} dy_1 dy_2
    \\
    &= 
    \Phi \inparen{\frac{-1}{\sigma}} \int_0^{\frac{1}{c\sigma}} (1+\sigma^2 z^2) \phi(z) dz\\
    &= 
    \Phi \inparen{\frac{-1}{\sigma}} \insquare{
    (1+\sigma^2)\inparen{ \Phi\inparen{\frac{1}{c\sigma}} - \frac{1}{2}} - \frac{\sigma}{c} \phi\inparen{\frac{1}{c\sigma}}
    }.
\end{align*}

\end{proof}

\subsubsection{Risk analysis of \texorpdfstring{$\hattheta_S$}{theta-hat S}}

\begin{theorem}\label{thm:hatthetaSExactRisk}
    In the running example,
    \begin{align*}
        R_\sigma(\theta^\star; \Theta_S)
        &=
        \sigma^2 \inparen{ 
            \Phi \inparen{ \frac{\sqrt{\alpha_c}}{\sigma} }- \frac{1}{2} - \frac{\sqrt{\alpha_c}}{\sigma} \phi \inparen{ \frac{\sqrt{\alpha_c}}{\sigma}}
        }
        +
        \alpha_c \Phi \inparen{-\frac{\sqrt{\alpha_c}}{\sigma}},
    \end{align*}
    where $\alpha_c = 1+\frac{1}{c^2}$.
\end{theorem}
\begin{proof}
We prove a more general result by letting $\theta^\star$ be an arbitrary element of $\Theta_S$ and then specialize to the $\theta^\star = v_1$ case. Note that we can write $\Theta_S = \{ t v_2: t\in [0,1]\}$, and so $\theta^\star = t^\star v_2$ for some $t^\star \in [0,1]$. Since $\Theta_S$ is simply a (truncated) line segment, the orthogonal projection onto $\Theta_S$ takes a particularly simple form. Specifically, we have $\hattheta_S = \hatt v_2$, where 
\begin{align*}
    \hatt := \operatorname{clip} \inparen{\frac{\inp{Y, v_2}}{\|v_2\|^2},0,1},
    \qquad 
    \operatorname{clip}(x,0,1) := \min(1, \max(0,x)).
\end{align*}
Recall $\|v_2\|^2 = 1+\frac{1}{c^2}=\alpha_c$. Now, we have 
\begin{align*}
    Y = \theta^\star + \sigma Z = t^\star v_2 + \sigma Z
\end{align*}

which implies 
\begin{align*}
    \frac{\inp{Y,v_2}}{\alpha_c} 
    = t^\star + \sigma \frac{\inp{Z, v_2}}{\alpha_c}
    = t^\star + \frac{\sigma}{\sqrt{\alpha_c}}g,
\end{align*}
where we define the scalar random variable $g:= \frac{\inp{Z, v_2}}{ \sqrt{\alpha_c}}$. Since $\inp{Z, v_2} \sim N(0, \alpha_c)$, it follows that $g \sim N(0,1)$.
Using this, we define the un-clipped estimator
\begin{align*}
    \tildet := \frac{\inp{Y,v_2}}{\alpha_c} = t^\star + \frac{\sigma}{\sqrt{\alpha_c}} g,
\end{align*}
and so $\hatt = \operatorname{clip}(\tildet,0,1)$. The risk is then 
\begin{align*}
    R_\sigma(\theta^\star; \Theta_S) = \E \| \hatt v_2 - t^\star v_2\|^2 = \alpha_c \E (\hatt - t^\star)^2.
\end{align*} 
Note that the event $\{\tildet \in [0,1]\}$ is equivalent to the event $\{ g \in [-\frac{\sqrt{\alpha_c}t^\star}{\sigma}, \frac{\sqrt{\alpha_c}(1-t^\star)}{\sigma}]\}$. To ease notation, for the remainder of the proof write: $a:= -\frac{\sqrt{\alpha_c}t^\star}{\sigma}$ and $b:= \frac{\sqrt{\alpha_c}(1-t^\star)}{\sigma}$. Direct calculation then yields 
\begin{align*}
    \E (\hatt - t^\star)^2
    &=
    \E (\hatt - t^\star)^2\indicator \{ g \le a \}
    +
    \E (\hatt - t^\star)^2\indicator \{ g \in [a,b] \}
    +
    \E (\hatt - t^\star)^2\indicator \{ g \ge b \}\\
    &=
    (t^\star)^2 \P ( g \le a )
    +
    \E (\tildet - t^\star)^2\indicator \{ g \in [a,b] \}
    +
    (1 - t^\star)^2  \P ( g \ge b)\\
    &=
    (t^\star)^2 \Phi(a)
    +
    \frac{\sigma^2}{\alpha_c }\E[ g^2 \indicator \{g \in [a,b] \} ]
    +
    (1 - t^\star)^2  \Phi(-b)\\
    &=
    (t^\star)^2 \Phi(a)
    +
    \frac{\sigma^2}{\alpha_c } \inparen{\Phi(b) - \Phi(a) - b\phi(b) +a \phi(a)}
    +
    (1 - t^\star)^2  \Phi(-b),
\end{align*}
where we have used the fact that 
\begin{align*}
    \E[ g^2 \indicator \{g \in [a,b] \} ]
    = \int_a^b z^2 \phi(z)dz = \Phi(b) - \Phi(a) - b\phi(b) + a \phi(a).
\end{align*}
The result follows by taking $t^\star = 0$.
\end{proof}

\section{Ratios of coordinates of a uniform random vector}\label{sec:coordratios}
We collect two elementary facts on ratios of coordinates of a uniform random vector on the unit circle.
\begin{lemma}\label{lem:ratioUnifSphere}
    Let $U =(U_1,U_2) \sim \operatorname{Unif}(\mcS^{1})$ and define $R := \frac{U_2}{U_1}$. Since $\P(U_1=0)=0$, $R$ is well defined almost surely. Then $R \sim \mathrm{Cauchy}(0,1)$.
\end{lemma}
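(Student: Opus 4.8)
The plan is to pass to the angular parametrization of the uniform law on the circle and reduce the claim to a one–dimensional change of variables. Writing $U=(\cos\Theta,\sin\Theta)$ with $\Theta\sim\tunif[0,2\pi)$ gives the standard representation of $\tunif(\mcS^{1})$, under which $R=U_2/U_1=\tan\Theta$. The event $\{U_1=0\}=\{\Theta\in\{\pi/2,3\pi/2\}\}$ has probability zero, so $R$ is well defined almost surely and it suffices to identify the law of $\tan\Theta$.

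The key structural point is that $\tan$ has period $\pi$, so $R$ depends on $\Theta$ only through its residue modulo $\pi$. First I would check that $\Theta\bmod\pi\sim\tunif[0,\pi)$: the reduction map is two-to-one and measure preserving, so each point of $[0,\pi)$ receives equal density from its two preimages. Composing with the piecewise translation that folds $[0,\pi)$ into $(-\pi/2,\pi/2)$ (the identity on $[0,\pi/2)$ and subtraction of $\pi$ on $(\pi/2,\pi)$) is again measure preserving and leaves $\tan$ unchanged, since $\tan(\psi-\pi)=\tan\psi$. Hence $R=\tan\Psi$ for some $\Psi\sim\tunif(-\pi/2,\pi/2)$, an interval on which $\tan$ is a smooth bijection onto $\R$.

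It then remains only to transform densities. With $\Psi$ having density $\tfrac1\pi$ on $(-\pi/2,\pi/2)$ and inverse map $\Psi=\arctan R$ satisfying $d\psi/dr=(1+r^2)^{-1}$, the density of $R$ is
\[
f_R(r)=\frac1\pi\cdot\frac{1}{1+r^2},\qquad r\in\R,
\]
which is exactly the standard Cauchy density, proving $R\sim\mathrm{Cauchy}(0,1)$.

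The only genuinely delicate step is the periodicity bookkeeping in the second paragraph: verifying that reducing modulo $\pi$ and folding into $(-\pi/2,\pi/2)$ preserves uniformity while leaving $\tan$ invariant, and correctly discarding the measure-zero set where $U_1=0$. An alternative that sidesteps the angle parametrization is to use $U=Z/\|Z\|$ for $Z\sim N(0,I_2)$, so that $R=Z_2/Z_1$ is a ratio of independent standard Gaussians; but establishing that this ratio is standard Cauchy reduces to the identical polar computation, so I would present the direct angular argument.
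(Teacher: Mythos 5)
Your proof is correct, but it takes a genuinely different route from the paper. The paper's argument is a two-line reduction to Gaussians: by rotational invariance, $U \stackrel{d}{=} Z/\|Z\|$ for $Z \sim N(0,I_2)$, hence $R = U_2/U_1 = Z_2/Z_1$, and the claim follows by citing the standard fact that a ratio of i.i.d.\ standard normals is $\mathrm{Cauchy}(0,1)$. You instead work directly with the angular parametrization $U = (\cos\Theta, \sin\Theta)$, $\Theta \sim \tunif[0,2\pi)$, reduce modulo the period $\pi$ of $\tan$, fold into $(-\pi/2,\pi/2)$, and finish with a one-dimensional change of variables giving the density $\frac{1}{\pi(1+r^2)}$. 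The trade-off: the paper's proof is shorter but outsources the real content to an unproved ``standard fact'' (whose usual proof is essentially your polar computation, or the two-dimensional Jacobian argument the paper carries out anyway for the conditional version in Lemma~\ref{lem:ratioUnifSphereConditionalOnPositiveU2}); your proof is fully self-contained and elementary, never invoking Gaussians, at the cost of the periodicity bookkeeping in your second paragraph --- each step of which (the two-to-one reduction preserving uniformity, the measure-preserving fold, invariance of $\tan$ under translation by $\pi$) you verify correctly. Your closing remark accurately identifies the paper's route and why it is not a shortcut in substance.
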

 \begin{proof}
    Let $Z = (Z_1,Z_2) \sim N(0,I_2)$. By rotational invariance, $U\equald \frac{Z}{\|Z\|}$. Therefore
    \[
        \frac{U_2}{U_1}
        \equald
        \frac{Z_2}{Z_1}.
    \]
    Since the ratio of two independent standard normal random variables has the standard Cauchy distribution, the result follows.
\end{proof}

\begin{lemma}\label{lem:ratioUnifSphereConditionalOnSignU1}
    Let $U =(U_1,U_2) \sim \operatorname{Unif}(\mcS^{1})$ and define
    $R := \frac{U_2}{U_1}$. Since $\P(U_1=0)=0$, $R$ is well defined
    almost surely. Then
    \[
        R \mid \{U_1\ge 0\} \sim \mathrm{Cauchy}(0,1),
    \]
    and 
    \[
        R \mid \{U_1\le 0\} \sim \mathrm{Cauchy}(0,1).
    \]
\end{lemma}

\begin{proof}
    We prove the first claim. Let $Z = (Z_1,Z_2) \sim N(0,I_2)$. By rotational invariance,
    $U\equald \frac{Z}{\|Z\|}$. Therefore $\frac{U_2}{U_1} \equald \frac{Z_2}{Z_1}$, and $\{U_1\ge0\} \equald \{Z_1\ge0\}$.
    Let
    \[
        X:=Z_1,
        \qquad
        R:=\frac{Z_2}{Z_1},
    \]
    and consider the change of variables $(x,r)\mapsto (z_1,z_2)=(x,xr)$. Equivalently, $Z_1=X$ and $Z_2=XR$. A direct calculation gives the joint density
    \[
        f_{X,R}(x,r)
        =
        \frac{|x|}{2\pi}
        \exp\left(-\frac{(1+r^2)x^2}{2}\right).
    \]
    Conditioning on $\{Z_1\ge0\}$ corresponds in $(x,r)$-coordinates to the region $\{x\ge0\}$. Thus
    \begin{align*}
        f_{R\mid Z_1\ge0}(r)
        &=
        \frac{1}{\P(Z_1\ge0)}
        \int_0^\infty f_{X,R}(x,r)\,dx \\
        &=
        2
        \int_0^\infty
        \frac{x}{2\pi}
        \exp\left(-\frac{(1+r^2)x^2}{2}\right)\,dx \\
        &=
        \frac{1}{\pi(1+r^2)}.
    \end{align*}
    Thus $R \mid \{U_1\ge0\}$ has the standard Cauchy distribution. The proof of the claim for $R \mid \{U_1\le0\}$ follows similarly, replacing $\{Z_1\ge0\}$ by $\{Z_1\le0\}$ and integrating over $(-\infty,0]$ instead of $[0,\infty)$.
\end{proof}

\section{Vanishing-noise behavior of the LSE}
\label{sec:vanishingnoise}
In this section we prove Proposition~\ref{prop:smallSigmaGeneral}. The argument uses differentiability properties of Euclidean projections onto closed convex sets. In the boundary case, the rescaled projection converges to the projection of the noise onto the tangent cone. In the interior case, the constraint is inactive with overwhelmingly high probability, which yields the sharper remainder estimate.

\begin{proof}[Proof of Proposition~\ref{prop:smallSigmaGeneral}]
For $\sigma>0$, define the scaled mapping
\[
G_\sigma(u):=\frac{\Pi_\Theta(\theta^\star+\sigma u)-\theta^\star}{\sigma},
\qquad u\in\R^d,
\]
so that $(\hattheta_\sigma-\theta^\star)/\sigma=G_\sigma(Z)$. Since $\Theta$ is closed and convex, the Euclidean projection $\Pi_\Theta$ is $1$-Lipschitz, and since $\Pi_\Theta(\theta^\star)=\theta^\star$, for all $\sigma>0$ and $u \in \R^d$

\begin{align}
\|G_\sigma(u)\|
=
\frac{\|\Pi_\Theta(\theta^\star+\sigma u)-\Pi_\Theta(\theta^\star)\|}{\sigma}
\le \|u\|
\tag{$\dagger$}\label{eq:dominate-v2}
\end{align}
Also, for any closed convex cone $C$, $\|\Pi_C(u)\|\le \|u\|$.

\noindent\textit{Boundary case.}
Assume $\theta^\star\in\partial\Theta$. Since $\Theta$ is compact, convex, and has nonempty interior, the projection $\Pi_\Theta$ is one-sided directionally differentiable at every boundary point, see \cite{zarantonello1971projections} and \cite{shapiro2016differentiability}.
More precisely, for each $u\in\R^d$,
\[
\lim_{t\to0^+}\frac{\Pi_\Theta(\theta^\star+t u)-\theta^\star}{t}
=
\Pi_{S_\Theta(\theta^\star)}(u),
\]
where $S_\Theta(\theta^\star)$ is the support cone of $\Theta$ at $\theta^\star$. 

By Definition~2.3 of \cite{zarantonello1971projections}, the
support cone $S_\Theta(\theta^\star)$ is the smallest closed convex cone
containing $\Theta-\theta^\star$. Since $\Theta$ is convex and $\theta^\star\in\Theta$, the set
$\bigcup_{t\ge0}t(\Theta-\theta^\star)$ is a convex cone containing
$\Theta-\theta^\star$. Therefore, its closure is precisely the smallest
closed convex cone containing $\Theta-\theta^\star$. By our definition of $T_\Theta(\theta^\star)$, it follows that $S_\Theta(\theta^\star)=T_\Theta(\theta^\star)$.
Taking $t=\sigma$ shows that $G_\sigma(u)\to\Pi_{T_\Theta(\theta^\star)}(u)$
for each fixed $u$. Evaluating at $Z$, we obtain almost sure convergence
$G_\sigma(Z)\to\Pi_{T_\Theta(\theta^\star)}(Z)$. Moreover, by \eqref{eq:dominate-v2},
\[
\|G_\sigma(Z)-\Pi_{T_\Theta(\theta^\star)}(Z)\|^2
\le
2\|G_\sigma(Z)\|^2+2\|\Pi_{T_\Theta(\theta^\star)}(Z)\|^2
\le
4\|Z\|^2,
\]
and since $\E\|Z\|^2<\infty$, dominated convergence yields
\[
\E\|G_\sigma(Z)-\Pi_{T_\Theta(\theta^\star)}(Z)\|^2\to0,
\]
which proves \eqref{eq:scaled-boundary-L2-v2}.
Then \eqref{eq:risk-boundary-v2}~follows from
\[
\E\|\hattheta_\sigma-\theta^\star\|^2
=\sigma^2\E\|G_\sigma(Z)\|^2
=\sigma^2(\E\|\Pi_{T_\Theta(\theta^\star)}(Z)\|^2+o(1)).
\]

\noindent\textit{Interior case.}
Assume $\theta^\star\in\operatorname{int}(\Theta)$ and let $ r:=\inf_{\theta\in\partial\Theta}\|\theta^\star-\theta\|>0$. Then $B(\theta^\star,r) \subseteq \Theta$.
On the event $\{\|Z\|\le r/\sigma\}$, $\theta^\star+\sigma Z\in\Theta$ and hence $G_\sigma(Z)=Z$.
Therefore $G_\sigma(Z)\to Z$ almost surely. To obtain $L^2$ convergence, note that by \eqref{eq:dominate-v2},
\[
\E\|G_\sigma(Z)-Z\|^2
=
\E\big[\|G_\sigma(Z)-Z\|^2\indicator_{\{\|Z\|>r/\sigma\}}\big]
\le
4\E\big[\|Z\|^2\indicator_{\{\|Z\|>r/\sigma\}}\big].
\]

Since $r/\sigma\to\infty$ as $\sigma\to0^+$, we have
$\indicator_{\{\|Z\|>r/\sigma\}}\to0$ almost surely. Moreover, $\|Z\|^2$ is
integrable, so  $\E\big[\|Z\|^2\indicator_{\{\|Z\|>r/\sigma\}}\big]\to0$ by dominated convergence. This proves \eqref{eq:scaled-interior-L2-v2}.

By \eqref{eq:dominate-v2}, we have $\|\hattheta_\sigma-\theta^\star\|
    =
    \sigma\|G_\sigma(Z)\|
    \le
    \sigma\|Z\|$. Hence, almost surely, 
\[
    0
    \le
    \sigma^2\|Z\|^2-\|\hattheta_\sigma-\theta^\star\|^2.
\]
Moreover, on the event $\{\|Z\|\le r/\sigma\}$, we have
$\theta^\star+\sigma Z\in\Theta$, and therefore
\[
    \hattheta_\sigma
    =
    \Pi_\Theta(\theta^\star+\sigma Z)
    =
    \theta^\star+\sigma Z.
\]
Thus, on the same event, $\sigma^2\|Z\|^2-\|\hattheta_\sigma-\theta^\star\|^2 = 0$, and consequently,
\begin{align*}
0
\le
\sigma^2 d-\E\|\hattheta_\sigma-\theta^\star\|^2 
=
\E\left[
    \sigma^2\|Z\|^2-\|\hattheta_\sigma-\theta^\star\|^2
\right] 
\le
\sigma^2
\E\big[\|Z\|^2\indicator_{\{\|Z\|>r/\sigma\}}\big].
\end{align*}

Thus
\[
0\le \sigma^2 d - \E\|\hattheta_\sigma-\theta^\star\|^2
\le
\sigma^2\E\big[\|Z\|^2\indicator_{\{\|Z\|>r/\sigma\}}\big].
\]

It remains to show that, for every $m\ge 1$,
\[
    \sigma^2
    \E\bigl[
        \|Z\|^2
        \indicator_{\{\|Z\|>r/\sigma\}}
    \bigr]
    =
    o(\sigma^m).
\]
Since $\|Z\|^2\sim\chi_d^2$, it has finite moments of all orders.
Fix $m\ge 1$, and choose an integer $k\ge 0$ such that
$2+2k>m$. On the event $\{\|Z\|^2>r^2/\sigma^2\}$, we have
\[
    1
    \le
    \left(\frac{\sigma^2\|Z\|^2}{r^2}\right)^k .
\]
Therefore
\[
    \|Z\|^2
    \indicator_{\{\|Z\|^2>r^2/\sigma^2\}}
    \le
    \|Z\|^2
    \left(\frac{\sigma^2\|Z\|^2}{r^2}\right)^k
    =
    \frac{\sigma^{2k}}{r^{2k}}\|Z\|^{2(k+1)}.
\]
Multiplying by $\sigma^2$ and taking expectations yields
\[
    \sigma^2
    \E\bigl[
        \|Z\|^2
        \indicator_{\{\|Z\|^2>r^2/\sigma^2\}}
    \bigr]
    \le
    \frac{\E[\|Z\|^{2(k+1)}]}{r^{2k}}
    \sigma^{2+2k}.
\]
Since $\E[\|Z\|^{2(k+1)}]<\infty$, and since $2+2k>m$, the right-hand
side is $o(\sigma^m)$. Hence
\[
    \sigma^2
    \E\bigl[
        \|Z\|^2
        \indicator_{\{\|Z\|>r/\sigma\}}
    \bigr]
    =
    o(\sigma^m).
\]
Since $m\ge1$ was arbitrary, the claim follows. Combining this estimate with the preceding display gives, for every $m\ge1$,
\[
    \sigma^2 d-\E\|\hattheta_\sigma-\theta^\star\|^2
    =
    o(\sigma^m).
\]
Equivalently,
\[
    \E\|\hattheta_\sigma-\theta^\star\|^2
    =
    \sigma^2 d+o(\sigma^m)
    \qquad
    \text{for every } m\ge1,
\]
which proves \eqref{eq:risk-interior-superpoly-v2}.
\end{proof}

\section{Quantitative bounds for polytopes}
\label{sec:quantitative-bounds-polytopes}
In Theorem~\ref{thm:large-sigma-projection}, we established that a positive
limiting gap
\[
    \Delta_\infty
    :=
    R_\infty(\theta^\star;\Theta_S)
    -
    R_\infty(\theta^\star;\Theta_L)
    >
    0
\]
implies risk reversal for all sufficiently large $\sigma$. In this section,
we make this implication quantitative for the special case of compact convex
polytopes. The key step is an effective bound on the convergence
$R_\sigma\to R_\infty$. For polytopes, the projection in the finite-noise
setting can differ from its diverging-noise limiting projection only in directions
where a vertex outside the exposed face is nearly tied for the maximum in that
direction. Controlling the spherical measure of this set of directions gives an
$O(\sigma^{-1})$ bound for $R_\infty-R_\sigma$, uniformly over
$\theta^\star$. This yields an explicit geometry-dependent sufficient threshold for finite-noise risk reversal.

\begin{proposition}\label{prop:quant-diverging-noise-polytope}
Fix $d\ge2$, and for $m \ge 2$ let $v_1,\ldots,v_m\in\R^d$ be the distinct vertices of a compact, convex polytope $\Theta=\operatorname{conv}(v_1,\ldots,v_m)\subseteq\R^d$. Set
\[
    D_{\Theta}:=\operatorname{diam}(\Theta),
    \qquad
    \delta_\Theta:=\min_{i\neq j}\|v_i-v_j\|.
\]
Then there exists a constant $C_d<\infty$, depending only on $d$, such that, uniformly over $\theta^\star\in\Theta$,
\[
0
\le
R_\infty(\theta^\star;\Theta)-R_\sigma(\theta^\star;\Theta)
\le
\frac{
    C_d \binom{m}{2}D_\Theta^4
}{
    \delta_\Theta\sigma
},
\qquad \sigma>0.
\]
\end{proposition}

\begin{proof}
Fix $\theta^\star\in\Theta$. Write $Z=\|Z\|U$, where $U\sim\operatorname{Unif}(\mcS^{d-1})$ and $U$ is independent of $\|Z\|$. For $u\in\mcS^{d-1}$, set 
\[
F_\Theta(u) := \operatorname*{arg\,max}_{\theta\in\Theta} \langle u,\theta\rangle,
\qquad
Q(u):=\Pi_{F_\Theta(u)}(\theta^\star).
\] 
Then Theorem~\ref{thm:large-sigma-projection} gives $R_\infty(\theta^\star;\Theta)
=
\E\|Q(U)-\theta^\star\|^2$.
On the other hand, writing $P_\tau(u):=\Pi_\Theta(\theta^\star+\tau u)$ for a given radius $\tau>0$, the finite-noise risk can be written as 
\[
R_\sigma(\theta^\star;\Theta) 
= \E \bigl\| P_{\sigma\|Z\|}(U)-\theta^\star \bigr\|^2. 
\]

We now prove two deterministic bounds relating the finite-radius projection $P_\tau(u)$ and the limiting exposed-face projection $Q(u)$ for any $u \in \mcS^{d-1}$. 
First, note that $P_\tau(u)$ maximizes
\[
    \theta\mapsto
    \langle u,\theta\rangle
    -
    \frac{1}{2\tau}\|\theta-\theta^\star\|^2
\]
over $\Theta$. Comparing this objective at $P_\tau(u)$ and at
$Q(u)\in F_\Theta(u)$, we obtain
\[
    h_\Theta(u)-\langle u,P_\tau(u)\rangle
    \le
    \frac{
    \|Q(u)-\theta^\star\|^2
    -
    \|P_\tau(u)-\theta^\star\|^2
    }{2\tau}.
\]
Since the left-hand side is nonnegative, we have
\[
    0
    \le
    \|Q(u)-\theta^\star\|^2
    -
    \|P_\tau(u)-\theta^\star\|^2.
\]
Second, we argue that the two projections coincide unless $u$ lies close to a
boundary where the exposed face $F_\Theta(u)$ changes.

To make this precise, let $V(\Theta)$ denote the vertex set of $\Theta$, and define 
\[
\gamma_\Theta(u)
:=
h_\Theta(u)
-
\max\{ \langle u,v\rangle: v\in V(\Theta),\ v\notin F_\Theta(u) \},
\qquad
h_\Theta(u):=\max_{\theta\in\Theta}\langle u,\theta\rangle,
\] 
with the convention that $\gamma_\Theta(u)=+\infty$ if all vertices lie in $F_\Theta(u)$. We call $\gamma_\Theta(u)$ the directional margin. It is the gap between the support value $h_\Theta(u)$ and the largest value of
$\langle u,v\rangle$ among vertices not belonging to the exposed face
$F_\Theta(u)$. We claim that $P_\tau(u)$ and $Q(u)$ differ only on the small-margin set
\[
\left\{ u\in\mcS^{d-1}: \gamma_\Theta(u)\le \frac{D_{\Theta}^2}{\tau} \right\}. 
\]

Let $y=\theta^\star+\tau u$. Since $P_\tau(u)=\Pi_\Theta(y)$, to prove $P_\tau(u)=Q(u)$ it is enough
to show that $Q(u)$ satisfies the variational characterization of the
projection of $y$ onto $\Theta$. Since $Q(u)\in F_\Theta(u)\subseteq\Theta$,
this amounts to verifying that
\[
    \langle y-Q(u),\theta-Q(u)\rangle\le0
    \qquad
    \text{for all }\theta\in\Theta.
\]
Write $\theta=\sum_{i=1}^m\lambda_i v_i$, where
$\lambda_i\ge0$ and $\sum_i\lambda_i=1$. Let $I:=\{i:v_i\in F_\Theta(u)\}$. For $i\in I$, since $v_i\in F_\Theta(u)$ and
$Q(u)=\Pi_{F_\Theta(u)}(\theta^\star)$,
\[
    \langle \theta^\star-Q(u),v_i-Q(u)\rangle\le0,
    \qquad
    \langle u,v_i-Q(u)\rangle=0.
\]
Here the first inequality is the projection optimality condition, while the
second identity follows because both $v_i$ and $Q(u)$ lie in the exposed
face $F_\Theta(u)$. 
For $i\notin I$, by Cauchy-Schwarz,
\[
    \langle \theta^\star-Q(u),v_i-Q(u)\rangle
    \le
    \|\theta^\star-Q(u)\|\,\|v_i-Q(u)\|
    \le
    D_{\Theta}^2,
\]
while, by the definition of $\gamma_\Theta(u)$,
\[
    \langle u,v_i-Q(u)\rangle
    =
    \langle u,v_i\rangle-h_\Theta(u)
    \le
    -\gamma_\Theta(u).
\]
Therefore
\[
    \langle y-Q(u),\theta-Q(u)\rangle
    =
    \sum_{i=1}^m
    \lambda_i
    \langle \theta^\star-Q(u)+\tau u,v_i-Q(u)\rangle
    \le
    \sum_{i\notin I}
    \lambda_i
    \bigl(D_{\Theta}^2-\tau\gamma_\Theta(u)\bigr)
    \le0
\]
whenever $\tau\gamma_\Theta(u)\ge D_{\Theta}^2$. This proves the claim.

Combining the preceding observations gives, for every $u\in\mcS^{d-1}$
and $\tau>0$,
\[
    0
    \le
    \|Q(u)-\theta^\star\|^2
    -
    \|P_\tau(u)-\theta^\star\|^2
    \le
    D_{\Theta}^2
    \mathbf 1
    \left\{
        \gamma_\Theta(u)
        \le
        \frac{D_{\Theta}^2}{\tau}
    \right\}.
\]

It remains to control the probability of small margins. If
$\gamma_\Theta(u)\le s$, then there exist distinct vertices $v_i,v_j$ such
that $|\langle u,v_i-v_j\rangle|\le s$. Indeed, one may take $v_i\in F_\Theta(u)$ and a vertex $v_j\notin F_\Theta(u)$
whose value of $\langle u,v_j\rangle$ is within $s$ of the maximum. Thus
\[
    \{u:\gamma_\Theta(u)\le s\}
    \subseteq
    \bigcup_{i<j}
    \{
        u\in\mcS^{d-1}:
        |\langle u,v_i-v_j\rangle|\le s
    \}.
\]
Using rotational invariance, for every nonzero $a\in\R^d$, $\langle U,a\rangle
\stackrel{d}{=}\|a\|U_1$,
where $U_1$ is the first coordinate of $U$. Since $U_1$ has bounded density in a neighborhood of zero, there exists a constant $C_d<\infty$, depending only on $d$, such that for every $t>0$,
\[
    \P(|U_1|\le t)\le C_d\min\{1,t\}.
\]
Therefore, for every $s>0$,
\[
    \P(|\langle U,a\rangle|\le s)
    =
    \P\left(|U_1|\le \frac{s}{\|a\|}\right)
    \le
    C_d\min\left\{1,\frac{s}{\|a\|}\right\}.
\]
Using $\|v_i-v_j\| \ge \delta_{\Theta}$, a union bound then gives
\[
    \P\bigl(\gamma_\Theta(U)\le s\bigr)
    \le
    C_d \binom{m}{2}\frac{s}{\delta_\Theta},
    \qquad s>0.
\]

We now apply the deterministic estimate with
$\tau=\sigma\|Z\|$. Conditioning on $\|Z\|$, we get
\[
\begin{aligned}
0
&\le
R_\infty(\theta^\star;\Theta)
-
R_\sigma(\theta^\star;\Theta)  \\
&\le
D_{\Theta}^2
\E\left[
\P\left(
    \gamma_\Theta(U)
    \le
    \frac{D_{\Theta}^2}{\sigma\|Z\|}
    \,\middle|\, \|Z\|
\right)
\right] \\
&\le
D_{\Theta}^2
\E\left[
    C_d \binom{m}{2}
    \frac{D_{\Theta}^2}{\delta_\Theta\sigma\|Z\|}
\right] \\
&=
C_d \binom{m}{2}
\frac{D_{\Theta}^4}{\delta_\Theta\sigma}
\E\|Z\|^{-1}.
\end{aligned}
\]
The bound is uniform in $\theta^\star$. Since $d\ge2$,
\[
    \E\|Z\|^{-1}
    =
    2^{-1/2}
    \frac{\Gamma((d-1)/2)}{\Gamma(d/2)}
    <
    \infty.
\]
Absorbing the finite factor $\E\|Z\|^{-1}$, which depends only on $d$, into $C_d$
gives the claimed bound.
\end{proof}

\begin{remark}[Sharpness of the $\sigma^{-1}$ dependence]
The order $\sigma^{-1}$ in Proposition~\ref{prop:quant-diverging-noise-polytope}
is sharp in general. To see this, take $\Theta=[-1,1]^d$ and $\theta^\star=0$. Then
\[
    \Pi_\Theta(\sigma Z)
    =
    \bigl(\operatorname{clip}(\sigma Z_i,-1,1)\bigr)_{i=1}^d,
\]
and therefore, if $g\sim N(0,1)$,
\[
    R_\sigma(\theta^\star;\Theta)
    =
    d\,\E\bigl[\min(\sigma^2g^2,1)\bigr].
\]
On the other hand, in the diverging-noise limit the projection selects a vertex
of the cube almost surely, so $R_\infty(\theta^\star;\Theta)=d$. Hence
\[
    R_\infty(\theta^\star;\Theta)-R_\sigma(\theta^\star;\Theta)
    =
    d\,\E\left[
        (1-\sigma^2g^2)\mathbf 1_{\{|g|\le 1/\sigma\}}
    \right].
\]
Writing $\phi$ for the standard normal density and changing variables
$x=\sigma g$, we obtain
\[
    R_\infty(\theta^\star;\Theta)-R_\sigma(\theta^\star;\Theta)
    =
    \frac{d}{\sigma}
    \int_{-1}^{1}(1-x^2)\phi(x/\sigma)\,dx.
\]
Consequently,
\[
    \sigma\bigl(R_\infty(0;\Theta)-R_\sigma(0;\Theta)\bigr)
    \to
    d\phi(0)\int_{-1}^{1}(1-x^2)\,dx
    =
    \frac{4d}{3\sqrt{2\pi}}.
\]
Thus
\[
    R_\infty(0;\Theta)-R_\sigma(0;\Theta)
    \sim
    \frac{4d}{3\sqrt{2\pi}}\frac1\sigma,
    \qquad \sigma\to\infty.
\]
Therefore the $\sigma^{-1}$ rate cannot be improved in general for compact
convex polytopes.
\end{remark}

\begin{corollary}
\label{cor:quant-threshold-polytope}
Let $d\ge2$, and define
\[
    \Theta_S=\operatorname{conv}(v_1,\ldots,v_{m_S}),
    \qquad
    \Theta_L=\operatorname{conv}(w_1,\ldots,w_{m_L})
\]
to be compact convex polytopes in $\R^d$, where
$v_1,\ldots,v_{m_S}$ are pairwise distinct and
$w_1,\ldots,w_{m_L}$ are pairwise distinct. Suppose further that $\Theta_S\subsetneq\Theta_L$.

Fix
$\theta^\star\in\Theta_S$, and suppose
\[
    \Delta_\infty
    :=
    R_\infty(\theta^\star;\Theta_S)
    -
    R_\infty(\theta^\star;\Theta_L)
    >
    0.
\]
Set
\[
    D_{\Theta_S}:=\operatorname{diam}(\Theta_S),
    \qquad
    \delta_{\Theta_S}:=\min_{i\neq j}\|v_i-v_j\|.
\]
Then risk reversal holds whenever
\[
    \sigma
    >
    \frac{
        C_d \binom{m_S}{2} D_{\Theta_S}^4
    }{
        \delta_{\Theta_S}\Delta_\infty
    }.
\]
That is, for every such $\sigma$, $R_\sigma(\theta^\star;\Theta_S) >R_\sigma(\theta^\star;\Theta_L)$.
\end{corollary}
\begin{proof}
For a compact polytope $\Theta$, define
\[
    \varepsilon_\sigma(\Theta)
    :=
    R_\infty(\theta^\star;\Theta)
    -
    R_\sigma(\theta^\star;\Theta).
\]
By Proposition~\ref{prop:quant-diverging-noise-polytope}, applied to
$\Theta_S$, we have
\[
    0
    \le
    \varepsilon_\sigma(\Theta_S)
    \le
    \frac{
        C_d \binom{m_S}{2}D_{\Theta_S}^4
    }{
        \delta_{\Theta_S}\sigma
    }.
\]
Similarly, applying Proposition~\ref{prop:quant-diverging-noise-polytope} to
$\Theta_L$ gives $\varepsilon_\sigma(\Theta_L)\ge 0$. Therefore,
\begin{align*}
    R_\sigma(\theta^\star;\Theta_S)
    -
    R_\sigma(\theta^\star;\Theta_L)
    &=
    \Bigl(R_\infty(\theta^\star;\Theta_S)
    -
    \varepsilon_\sigma(\Theta_S)\Bigr)
    -
    \Bigl(R_\infty(\theta^\star;\Theta_L)
    -
    \varepsilon_\sigma(\Theta_L)\Bigr) \\
    &=
    \Delta_\infty
    -
    \varepsilon_\sigma(\Theta_S)
    +
    \varepsilon_\sigma(\Theta_L) \\
    &\ge
    \Delta_\infty
    -
    \frac{
        C_d \binom{m_S}{2}D_{\Theta_S}^4
    }{
        \delta_{\Theta_S}\sigma
    }.
\end{align*}
The right-hand side is strictly positive whenever
\[
    \sigma
    >
    \frac{
        C_d \binom{m_S}{2}D_{\Theta_S}^4
    }{
        \delta_{\Theta_S}\Delta_\infty
    }.
\]
Hence, for every such $\sigma$, $R_\sigma(\theta^\star;\Theta_S)
    >
    R_\sigma(\theta^\star;\Theta_L)$, as claimed.
\end{proof}

\bibliographystyle{plainnat}
\bibliography{references}

\end{document}